
\documentclass[11pt]{elsarticle}



\usepackage{pifont} 
\usepackage[stretch=10]{microtype}
\usepackage{xy}
\usepackage{graphicx,amsfonts,latexsym,amssymb,cancel,pstricks,array}
\usepackage{amssymb,amscd,amsmath,latexsym}
\usepackage{hyperref}
\usepackage{slashbox}
\usepackage{tikz}
\tikzset{node distance=2cm, auto}
\usetikzlibrary{matrix}



\setlength{\parindent}{0in}
\setlength{\parskip}{4mm}


\input xy
\xyoption{all}


\newcommand{\su}{{\underline s}}

\newcommand{\kapp}{{\bo K}}
\newcommand{\ao}{\alpha_1}
\newcommand{\az}{\alpha_0}

\newcommand{\BBM}{\B^{\mu}}
\newcommand{\BBS}{\B^{\sigma}}
\newcommand{\BBB}{\B^{\mu \sigma}}
\newcommand{\BOM}{\B_1^{\mu}}
\newcommand{\BTM}{\B_2^{\mu}}
\newcommand{\BOMS}{\B_1^{\mu \sigma}}
\newcommand{\BTMS}{\B_2^{\mu \sigma}}
\newcommand{\BO}{\bo {B_1}}
\newcommand{\BT}{\bo {B_2}}
\newcommand{\kap}{\kappa}
\newcommand{\bo}[1]{\mathbf{#1}}
\newcommand{\B}{{\bo{B}}}
\newcommand{\bardan}{\bar{\D}_{\underline s}}

\newcommand{\Hom}{\operatorname{Hom}}
\newcommand{\Homcat}{\operatorname{h}}

\newcommand{\CATC}{\C}

\newcommand{\ra}{\rightarrow}

\newcommand{\FC}{{\bo F_\ast \C}}
\newcommand{\FB}{\overline{\bo F_{\ast} \BBB}}
\newcommand{\FBO}{\overline{\bo F_{\ast} \BOMS}}
\newcommand{\FBT}{\overline{\bo F_{\ast} \BTMS}}
\newcommand{\bFC}{\overline{\FC}}
\newcommand{\lra}{\longrightarrow}
\newcommand{\spaces}{\bo{Spaces}}

\newcommand{\algc}{\bo{Alg^C}}
\newcommand{\algcc}{\bo{Alg^{\CC}}}
\newcommand{\algt}{\bo{Alg^T}}
\newcommand{\T}{{\bo T}}
\newcommand{\C}{{\bo C}}

\newcommand{\D}{{\bo D}}

\newcommand{\Prj}{{\bo P}}
\newcommand{\M}{{\bo M}}

\newcommand{\CC}{{\bar{\C}}}

\newcommand{\sC}{{\spaces^{\C}}}

\newcommand{\sFC}{\spaces^{\FC}}

\newcommand{\sprf}{\spaces^{\Prj}_{fib}}

\newcommand{\lsc}{{\bo L \sC}}
\newcommand{\lsd}{{\bo L \spaces^{\B}}}
\newcommand{\lsddo}{{\bo L \spaces^{\BO}}}
\newcommand{\lsddt}{{\bo L \spaces^{\BT}}}
\newcommand{\lsddm}{{\bo L \spaces^{\BBM}}}
\newcommand{\lsco}{{\bo L \spaces^{\C_1}}}
\newcommand{\sFCO}{\spaces^{\bo F_{\ast}\C_1}}

\newcommand{\map}{{\rm Map}}

\newcommand{\KJ}{(K_{\C},J_{\C})}

\newcommand{\CCC}{\bo{C}}

\newcommand{\Set}{\bo{Sets}}

\newcommand{\id}{{\ensuremath{\rm id}}}
\newcommand{\colim}[1][]{\operatorname{colim}_{#1}}
\newcommand{\hocolim}[1][]{\operatorname{hocolim}_{#1}}
\newcommand{\holim}[1][]{\operatorname{holim}_{#1}}



\def\sp#1{s_{#1}^{1}}

\def\s#1{s_{#1}}

\def\alg#1{{\bo{Alg}}^{#1}}

\def\mapright#1#2{\smash{\mathop{\hbox to
#1pt{\rightarrowfill}}\limits^{#2}}}



\newproof{proof}{Proof}
\newproof{poto}{Proof of Theorem {\ref{mainIV}}}
\newproof{pott}{Proof of Theorem {\ref{MAINI}}}
\newproof{pottc}{Proof of Theorem{\ref{MAINI COR}}}
\newproof{potth}{Proof of Theorem {\ref{MAINII}}}
\newproof{pol}{Proof of Lemma {\ref{compare}}}
\newproof{polo}{Proof of Lemma {\ref{FC UNIT}}}
\newproof{polt}{Proof of Lemma {\ref{BB}}}
\newtheorem{theorem}{Theorem}[section]
\newtheorem{corollary}[theorem]{Corollary}
\newtheorem{lemma}[theorem]{Lemma}
\newtheorem{proposition}[theorem]{Proposition}
\newdefinition{definition}[theorem]{Definition}
\newdefinition{example}[theorem]{Example}

\newtheorem{note}[theorem]{Note}
\newtheorem{remark}[theorem]{Remark}


%
%
%
%
\begin{document}
\begin{frontmatter}
\title{Rigidification of Homotopy Algebras over Finite Product Sketches}
\author{Bruce R Corrigan-Salter}
\ead{brcs@wayne.edu}
\address{Department of Mathematics\\ Wayne State University\\
1150 Faculty/Administration Building\\
656 W. Kirby 
\\Detroit, MI~48202, USA}


\begin{abstract}
Multi-sorted algebraic theories provide a formalism for describing various structures on spaces that are of interest in homotopy theory. The results of Badzioch (2002) and Bergner (2006) showed that an interesting feature of this formalism is the following rigidification property. Every multi-sorted algebraic theory defines a category of homotopy algebras, i.e. a category of spaces equipped with certain structure that is to some extent homotopy invariant. Each such homotopy algebra can be replaced by a weakly equivalent strict algebra which is a purely algebraic structure on a space. The equivalence between the homotopy categories of loop spaces and topological groups is a special instance of this result. 

In this paper we will introduce the notion of a finite product sketch which is a useful generalization of a multi-sorted algebraic theory. We will show that in the setting of finite product sketches we can still obtain results paralleling these of Badzioch and Bergner, although a rigidification of a homotopy algebra over a finite product sketch is given by a strict algebra over an associated simplicial multi-sorted algebraic theory. 
\end{abstract}

\begin{keyword}
finite product sketch \sep algebraic theory \sep semi-theory \sep rigidification \sep homotopy algebra \sep strict algebra 

\MSC[2010] Primary: 18G30; Secondary: 18C10 \sep 18G30 \sep 18G55 \sep 55P48
\end{keyword}

\end{frontmatter}

\section{\bf Introduction}

Let $\bo\Delta$ be the category whose objects are finite ordered sets $[n] = \{0, 1, \dots, n\}$, 
$n\geq 0$, and whose morphisms are non-decreasing functions. 
By $\bo\Delta^{op}$ we denote the opposite category and let $\bo I$ be the category with two objects $i_{0}, i_{1}$ and one non-identity 
morphism $\pi\colon i_{0}\to i_{1}$.  Consider the product category $\bo\Delta^{op}\times \bo I$.  It was shown by Prezma in \cite{prezma} that functors $X\colon \Delta^{op}\times \bo I\ra\spaces$ which send $([n],i_1)$ to a space which is homotopy equivalent to $X([1],i_1)^n$ and which send $([n],i_0)$ to a space which is homotopy equivalent to $X([n],i_1)\times X([0],i_0)$ for all $n$ provide a formalism for describing actions of loop spaces on topological spaces (see Example \ref{NLOOP EXAMPLE} and Example \ref{PREZMA EXAMPLE} for more detail) .  

One might consider the work of Badzioch \cite{badziochI},\cite{badziochIII} and Bergner \cite{Bergner} (see Theorem \ref{MULTISORTED THM}) to attempt to construct an "equivalent" algebraic structure to describe these actions, however the category $\bo\Delta^{op}\times \bo I$ is an example of a finite product sketch \ref{FINITE DEF} which is neither a multi-sorted algebraic theory \ref{ALGTH DEF} or a semi theory \ref{SEMITH DEF} (over a set of size 1) for which such a rigidification is known.  With this in mind, we consider more general finite product sketches.    

For $n\geq 1$ let $\CCC_{n}$ denote the category whose objects are natural numbers 
$0, 1, \dots, n$, and that has one non-identity morphism $p^{n}_{k}\colon 0\to k$ 
for each $k=1, \dots, n$. Also, by $\CCC_{0}$ we will denote the category
with only one object $0$ and the identity morphism only. 
Given a small category 
$\B,$   an $n$-fold \emph{cone} in $\B$ is a functor $\alpha\colon \CCC_{n}\to \B$  (this agrees with the traditional category theory definition of a cone over a discrete category with $n$ objects). 
It will be convenient,  given an $n$-fold cone $\alpha$,  to denote $n$ by $|\alpha|$, 
$\alpha(k)$ by $\alpha_{k}$ and $\alpha(p^{n}_{k})$ by $p^{\alpha}_{k}$. 
We will call the morphism $p^{\alpha}_{k}$ the \textit{$k$-th projection  of} $\alpha$. 

\begin{definition}
\label{FINITE DEF}
A \emph{finite product sketch} is a pair $(\B, \kappa)$ where $\B$ is a small category and 
$\kappa$ is a set of cones in $\B.$
\end{definition}

Sometimes,  when this will not lead to a confusion, we will write $\B$ to refer to the sketch $(\B, \kappa)$.

\begin{definition}
Let $\D$ be a category closed under finite products 
and let $(\B, \kappa)$ be a finite product sketch.  A  \emph{strict $(\B, \kappa)$-algebra} with values in $\D$ is 
a functor
$$A\colon \B\to \D$$
such that  for any  cone $\alpha\in \kappa$ the morphism  
$$\prod_{k=1}^{|\alpha|}A(p^{\alpha}_{k})\colon A(\alpha_{0}) \to \prod^{|\alpha|}_{k=1}A(\alpha_{k})$$
is an isomorphism in $\D$. 
For a cone $\alpha\colon \C_{0}\to \B$ this condition means that 
$\alpha_{0}$ is a terminal object in $\D$. A \textit{morphism} of strict algebras is 
a natural transformation of functors.
\end{definition} 

Finite product sketches and their strict algebras have long been present in categorical algebra 
as a formalism for describing algebraic structures. Giving  a strict $(\B, \kappa)$-algebra 
amounts to describing some algebraic object in $\D$ of the type determined by the sketch 
$(\B, \kappa)$. We illustrate this by a few examples.  

\begin{example}
Let $\B$ be the category consisting of two objects $b_{1}, b_{2}$, and three 
non-identity morphisms $\varphi_{1}, \varphi_{2}, \mu\colon b_{2}\to b_{1}$. Let 
$\alpha\colon \C_{2}\to \B$ be given by $p^{\alpha}_{k} = \varphi_{k}$.  
A strict algebra over the sketch $(\B, \{\alpha\})$ with values in the category 
of sets, $\Set$ is a functor  
$$A\colon \B \to \Set$$
such that $A(b_{2})\cong A(b_{1})\times A(b_{1})$ and 
$A(\varphi_{1}), A(\varphi_{2})\colon A(b_{2})\to A(b_{1})$ are the projection maps. 
In effect the category of strict $(\B, \{\alpha\})$-algebras is equivalent to the category 
whose objects are sets $Y$ equipped with a binary operation $\mu_{Y}\colon Y\times Y \to Y$, 
and whose morphisms are maps $f\colon Y\to Y'$ satisfying 
$$\mu_{Y'}(f(y_{1}), f(y_{2})) = f(\mu_{Y}(y_{1}, y_{2}))$$
It is not hard to modify this example to obtain sketches whose algebras are sets with a 
binary operation that satisfies some further conditions (is unital, associative, commutative, has 
inverse etc.).
\end{example}

\begin{example}
\label{WEAKEQ EXAMPLE}
Let $\B$ be a small category and let $\bo W\subseteq \B$ be a subcategory of 
$\B$. For each morphism $\varphi\in \bo W$ let $\alpha_{\varphi}\colon \C_{1}\to \B$
be the functor given by $p^{\alpha_{\varphi}}_{1} = \varphi$. Giving a strict algebra 
over the sketch $(\B, \{\alpha_{\varphi}\}_{\varphi\in \bo W})$ with values in 
a category $\D$ amounts to giving a functor $A\colon \B\to \D$ such that 
$A(\varphi)$ is an isomorphism for each $\varphi\in \bo W$. 
\end{example}

The next few examples are somewhat more complex. Their origin will be explained later in 
this Section.

\begin{example}[Segal's category of finite sets as a finite product sketch]
\label{GAMMA EXAMPLE}
Let $\bo \Gamma^{op}$ denote the category whose objects are finite sets $[n] = \{0, 1, \dots, n\}$, 
$n\geq 0$ and whose morphisms are maps of sets $\varphi\colon [n]\to [m]$ satisfying 
$\varphi(0)=0$. For $n\geq 1$ let $\alpha^{n}\colon \C_{n}\to \bo\Gamma^{op}$ be the functor 
such that $\alpha_{0} = [n]$, $\alpha_{k} = [1]$ for $k>0$ and $p^{\alpha^{n}}_{k}\colon [n]\to [1]$ 
is the map given by 
$$
p^{\alpha^{n}}_{k}(i) = 
\begin{cases}
1 & \text{for } i=k \\
0 & \text{otherwise} \\
\end{cases}
$$
Also, let $\alpha^{0}\colon \C_{0}\to \bo\Gamma^{op}$ be given by $\alpha^{0}_{0} = [0]$.
Giving a strict $(\bo\Gamma^{op}, \{\alpha^{n}\}_{n\geq 0})$-algebra
$A\colon \bo\Gamma^{op}\to \Set$ 
is equivalent to defining a structure of an abelian monoid on the set $A([1])$. 
\end{example}

\begin{example}
\label{NLOOP EXAMPLE}
As in the beginning of the introduction, let $\bo\Delta$ be the category whose objects are finite ordered sets $[n] = \{0, 1, \dots, n\}$, 
$n\geq 0$, and whose morphisms are non-decreasing functions. 
Let $\bo\Delta^{op}$ denote the opposite category. 
For $m\geq 1$ define a sketch $(\bo\Delta^{op}, \kappa^{m})$ as follows. 
We have
$$\kappa^{m} = \{\alpha^{k}\}_{k\geq 0}$$
where for $k< m$ the functor $\alpha^{k}\colon \C_{0} \to \bo\Delta^{op}$ is given by 
$\alpha^{k}_{0} = [k]$. For $k\geq m$ let $N_{k} = \binom{k}{m}$ and let $f_{k}$
be a bijection between the set of natural numbers $\{1, \dots, N_{k}\}$ and the set 
of strictly increasing functions $\varphi\colon [m]\to [k]$ satisfying $\varphi(0) = 0$ (we will see that the choice of $f_{k}$ is equivalent to choosing which projection morphism to call the $k$-th projection). 
Define $\alpha^{k}\colon \C_{N_{k}}\to \bo\Delta^{op}$ by setting 
$p^{\alpha^{k}}_{i} = f_{k}(i)$.  
For $m=1$  giving a strict $(\bo\Delta^{op}, \kappa^{m})$-algebra $A\colon \bo\Delta^{op}\to \Set$
amounts to defining a group structure on the set $X([1])$. For $m> 1$ giving a strict   
$(\bo\Delta^{op}, \kappa^{m})$-algebra $A\colon \bo\Delta^{op}\to \Set$ is equivalent 
to specifying an abelian group structure on $A([m])$ (see \cite{bousfield}).  
\end{example}

\begin{example}
\label{PREZMA EXAMPLE}

Let $\bo\Delta$ be defined as in Example \ref{NLOOP EXAMPLE}. For $n\geq 1$
and $1\leq k \leq n$ let $i^{n}_{k}\colon [1]\to [n]$ be the morphism in $\bo\Delta$
given by $i^{n}_{k}(0) = k-1$ and $i^{n}_{k}(1) = k$. Also, let 
$j^{n}_{1}, j^{n}_{2}\colon [0]\to [n]$ be given by $j^{n}_{1}(0) = 0$ and 
 $j^{n}_{2}(0) = n$.  

Let $\bo I$ be the category with two objects $i_{0}, i_{1}$ and one non-identity 
morphism $\pi\colon i_{0}\to i_{1}$.  Consider the product category $\bo\Delta^{op}\times \bo I$. 
For $n\geq 1$ let 
$$\alpha^{n}\colon \C_{n}\to \bo\Delta^{op}\times \bo I$$
be the cone such that   $p^{\alpha^{n}}_{k}$ is  the morphism 
$i^{n}_{k}\times \id_{i_{1}}\colon ([n], i_{1}) \to ([1], i_{1})$, and
let $\alpha^{0}$ be the $0$-fold cone given by $\alpha^{0}_{0} = ([0], i_{1})$. 
Also, for $n\geq 0$ let 
$$\beta^{n}, \gamma^{n} \colon \C_{2}\to \bo\Delta^{op}\times \bo I$$
be defined by $p^{\beta^{n}}_{1} = j^{n}_{1}\times \id_{i_0}$,  
$p^{\gamma^{n}}_{1} = j^{n}_{2}\times \id_{i_0}$, 
and $p^{\beta^{n}}_{2} = p^{\gamma^{n}}_{2} = \id_{n}\times \pi$. 
Define 
$$\kappa := \{\alpha^{n}\}_{n\geq 0} \cup \{\beta^{n}\}_{n\geq 0}\cup  \{\gamma^{n}\}_{n\geq 0}$$
Giving a strict $(\bo\Delta^{op}\times \bo I, \kappa)$-algebra 
$$X\colon \bo\Delta^{op}\times \bo I\to \Set$$
is equivalent to describing an associative monoid structure on the set $X([1], i_{1})$ and an action of 
this monoid on the set $X([0], i_{0})$. Giving a morphism of strict algebras $\varphi\colon X\to X'$ amounts to 
giving maps $\varphi_{1}\colon X([1], i_{1}) \to X'([1], i_{1})$ and 
$\varphi_{2}\colon X([0], i_{0}) \to X([0], i_{0})$ where $\varphi_{1}$ is a homomorphism of monoids 
and  $\varphi_{2}$ preserves the  action:
$$\varphi_{2}(mx) = \varphi_{1}(m)\varphi_{2}(x)$$
for all $x\in X([0], i_{0})$ and $m\in X([1], i_{1})$  (see \cite{prezma}). 
\end{example}

Finite product sketches have been shown to be very useful for describing various structures that appear in 
homotopy theory. In this context we need to replace  the notion of a strict algebra over 
a sketch by a more flexible notion of a homotopy algebra. Let $\spaces$ denote the category 
of simplicial sets. 

\begin{definition}
Let $(\B, \kappa)$ be a finite product sketch. A \emph{homotopy $(\B, \kappa)$-algebra}
is a functor
$$X\colon \B \to \spaces$$
such that for any  cone $\alpha\in \kappa$ the map 
$$\prod_{k=1}^{|\alpha|}X(p^{\alpha}_{k})\colon X(\alpha_{0}) \to \prod^{|\alpha|}_{k=1}X(\alpha_{k})$$
is a weak equivalence in $\spaces$.  
For a $0$-fold cone $\alpha$ this condition means that 
$X(\alpha_{0})\simeq \ast$.  A \textit{morphism} of homotopy algebras is a natural transformation (i.e. morphism in the category $\spaces^{\B}$).
\end{definition}

The  sketches described in Examples \ref{WEAKEQ EXAMPLE}--\ref{PREZMA EXAMPLE}
were all constructed because of the interest in the homotopy algebras they define. 
Homotopy algebras over the sketch $(\B, \{\alpha_{\varphi}\}_{\varphi\in \bo W})$ defined in Example \ref{WEAKEQ EXAMPLE}
are functors $\B\to \spaces$ that map morphisms of $\bo W$ to weak equivalences. 
Categories of such functors were studied e.g. by Dwyer and Kan in \cite{dwyer-kanI} who 
also gave several applications of such functors in homotopy theory. Homotopy 
algebras over the sketch $(\bo\Gamma^{op}, \{\alpha^{n}\}_{n\geq 0})$ defined in Example \ref{GAMMA EXAMPLE} are the special 
$\bo\Gamma$-spaces that were introduced by Segal in \cite{segal} to describe the structure 
of infinite loop spaces.  In \cite{bousfield} Bousfield showed that  the homotopy category of 
homotopy algebras over the sketch $(\bo\Delta^{op}, \kappa^{m})$ defined in Example 
\ref{NLOOP EXAMPLE} is equivalent to the homotopy category of $m$-fold loop spaces. Finally, homotopy 
algebras over the sketch $(\bo\Delta^{op}\times \bo I, \kappa)$ defined in Example \ref{PREZMA EXAMPLE} appear in the work of Prezma
\cite{prezma} as a formalism that describes ``homotopy actions'', i.e. actions of loop spaces on 
topological spaces.

Obviously any strict algebra over a sketch $(\B, \kappa)$ with values in $\spaces$ is  
also a homotopy $(\B, \kappa)$-algebra. A natural question is if the opposite statement 
also holds. More precisely, we will say that a morphism $\varphi\colon X\to X'$ of homotopy 
algebras is a weak equivalence if the map $\varphi_{b}\colon X(b)\to X'(b)$ is a weak equivalence 
for all  $b\in \B$.  We can ask if it is true that for any homotopy $(\B, \kappa)$-algebra $X$ there exists a strict 
$(\B, \kappa)$-algebra $X'$ such that $X'\simeq X$.  We will call this a \textit{rigidification problem} since 
it asks whether a homotopy (pseudo) structure can be replaced by an equivalent strict structure.

The examples of finite product sketches listed above show that  rigidification of homotopy algebras 
is not always possible.  Take e.g. the sketch $(\bo\Delta^{op}, \kappa^{2})$  described in Example
\ref{NLOOP EXAMPLE}.  Strict algebras over this sketch with values in $\spaces$ are simplicial 
abelian groups, while homotopy algebras correspond to double loop spaces. Since it is not true that every 
double loop space is  weakly equivalent to a simplicial abelian group (a consequence of \cite[Ch. 4, B.2.1]{Farjoun} by Dror Farjoun and the fact that not every double loop space is a product of Eilenberg-Mac Lane spaces), it is in general not possible to find 
a strict $(\bo\Delta^{op}, \kappa)$-algebra weakly equivalent to a given homotopy algebra. 

Badzioch \cite{badziochI} and Bergner \cite{Bergner} showed that homotopy 
algebras can be rigidified if the sketch $(\B, \kappa)$  is of a special form. In order to explain 
their results we will need a couple of definitions.

\begin{definition}
A \emph{product cone}  in a category $\B$ is a cone $\alpha\colon \CCC_{n}\to \B$ such that the categorical 
product $\prod_{k=1}^{n} \alpha(k)$ exists in $\B$ and that the map 
$$\prod_{k=1}^{n} \alpha(p^{n}_{k})\colon \alpha(0) \to \prod_{k=1}^{n} \alpha(k)$$
is an isomorphism in $\B$. 
\end{definition}

\begin{definition}[cf. {\cite[3.1]{Bergner}\cite[Section 6]{Fiore}}]
\label{ALGTH DEF}
Let $S$ be a set. An \emph{$S$-sorted algebraic theory} is a finite product sketch $(\T, \kappa)$
satisfying the following properties:
\begin{itemize}
\item [i)]  objects $t_{\underline s}\in \T$ are indexed by all $n$-tuples, 
${\underline s} = (s_{1}, \dots, s_{n})$ (with possible repetitions) where  and $s_{i}\in S$ and $n\geq 0$;
\item [ii)]  for an $n$-tuple ${\underline s} = (s_{1}, \dots, s_{n})$ the object $t_{\underline s}$ 
is a categorical product  of $t_{s_{1}}, \dots t_{s_{n}}$ (by abuse of notation we will denote 
by $t_{s_{1}}$ the object of $\T$ indexed by the $1$-tuple $(s_{1})$); 
\item [iii)] the set $\kappa$ consists of all product cones $\alpha^{\underline s}$ indexed by  
$n$-tuples ${\underline s} = (s_{1}, \dots, s_{n})$ with $n\geq 0$,
where $\alpha^{\underline s}_{0} = t_{\underline s}$ and 
$\alpha^{\underline s}_{k} = t_{s_{k}}$ for $k=1, \dots, n$.
\end{itemize}
\end{definition}

We will call $S$  the \textit{set of sorts} for $\T$. We will also call a sketch $\T$ a multi-sorted 
algebraic theory if $\T$ is an $S$-sorted algebraic theory for some set $S$. Notice that 
a strict algebra over a multi-sorted algebraic theory $\T$ with values in $\spaces$ is just a product 
preserving functor $\T \to \spaces$, and a homotopy $\T$-algebra is a functor that preserves 
products up to a weak equivalence. 

\begin{remark}
It can be noticed that algebraic theories, introduced by Lawvere in \cite{lawvere} are $S$-sorted algebraic theories over a set $S$ of size 1 and thus also finite product sketches.
\end{remark}

The result of Bergner and Badzioch can be now stated as follows:

\begin{theorem}[\cite{badziochI},  \cite{Bergner}] 
\label{MULTISORTED THM}
If $\T$ is a multi-sorted algebraic theory then any homotopy 
$\T$-algebra is weakly equivalent to a strict $\T$-algebra. 
\end{theorem}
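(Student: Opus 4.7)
The plan is to prove the theorem via a Quillen equivalence of model categories, following the Badzioch--Bergner strategy. The first step is to equip $\spaces^{\T}$ with the projective model structure (weak equivalences and fibrations detected objectwise) and to transfer a model structure to $\algt$ along the adjunction $F \colon \spaces^{\T} \rightleftarrows \algt \colon U$, where $U$ is the inclusion and $F$ its left adjoint given by left Kan extension through the universal product-preserving completion of $\T$. By construction $(F, U)$ is a Quillen pair with weak equivalences and fibrations in $\algt$ detected by $U$.

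The second step is to left Bousfield localize $\spaces^{\T}$ at the set of maps encoding the product cones of $\kappa$: for each $\alpha \in \kappa$ indexed by $\underline{s} = (s_{1}, \dots, s_{n})$, localize at the canonical comparison map $\coprod_{k=1}^{n} \T(t_{s_{k}}, -) \ra \T(t_{\underline s}, -)$, after cofibrant replacement if needed. Call the resulting model category $\lst$. A standard Bousfield-localization argument then identifies the fibrant objects of $\lst$ with the fibrant homotopy $\T$-algebras.

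The heart of the argument is to show that $(F, U)$ lifts to a Quillen equivalence between $\algt$ and $\lst$, and the main obstacle is proving that the derived unit $\eta \colon X \to U F(X)$ is a weak equivalence whenever $X$ is cofibrant in $\lst$. I would verify this first on representable functors $X = \T(t, -)$, where $F$ acts essentially as the identity and $\eta$ is an isomorphism, and then bootstrap to arbitrary cofibrant $X$ by cell induction, using that cofibrant objects in $\lst$ are built from representables via pushouts along generating cofibrations and sequential colimits, all of which $F$ preserves. The delicate point is verifying that such pushouts and colimits of representables, viewed as strict algebras via $F$, remain homotopy $\T$-algebras, so that locality is preserved and weak equivalences propagate through the cell filtration. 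Once the Quillen equivalence is in hand, the theorem follows immediately: given any homotopy $\T$-algebra $X$, take a cofibrant-fibrant replacement $\tilde X$ in $\lst$; then $X' := F(\tilde X) \in \algt$ is a strict algebra with $\tilde X \simeq U(X')$, hence $X \simeq X'$ viewed in $\spaces^{\T}$.
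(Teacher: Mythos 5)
Your overall strategy---the projective structure on $\spaces^{\T}$, the transferred structure on $\algt$, the Bousfield localization $\lst$ at the product-cone maps, and the reduction to showing the derived unit $\eta_{X}\colon X\to UF(X)$ is a weak equivalence for cofibrant $X$, starting from the observation that corepresentables $\T(t,-)$ are already strict algebras so $\eta$ is an isomorphism there---is exactly the Badzioch--Bergner outline, and the paper's Sections~\ref{SET UP}--\ref{PF LEMMA} follow the same blueprint for the more general semi-theory case.

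However, the bootstrap step is where your argument has a genuine gap. You propose to propagate the unit-is-a-weak-equivalence property from representables to arbitrary cofibrant $X$ ``by cell induction, using that cofibrant objects in $\lst$ are built from representables via pushouts along generating cofibrations and sequential colimits, all of which $F$ preserves.'' While $F$ does preserve all colimits, the composite $UF$ does not: $U$ is a right adjoint and does not preserve pushouts. Given a pushout square in $\spaces^{\T}$ with $\eta$ a weak equivalence on all three corners, applying $UF$ produces the pushout computed in $\algt$, which is in general not the pushout of the $UF$-images computed in $\spaces^{\T}$; there is no induced comparison of homotopy pushouts that lets the weak equivalence propagate. (Your worry about ``locality being preserved'' also misses the mark---$F(X)$ is always a strict algebra and hence a homotopy algebra; the problem is entirely about $UF$ failing to commute with pushouts.) The correct device, used by Badzioch and mirrored in the paper's Lemma~\ref{badi} and its proof, is to replace $X$ by a specific cofibrant replacement $\hat X = |FU_{\bullet}X|$ given as the geometric realization of a (bi)simplicial object whose each level is a coproduct of corepresentables, then to reduce to the fact that $UF$ commutes with geometric realization and with filtered colimits and that $\eta$ is already known to be an equivalence on finite coproducts of corepresentables (Lemma~\ref{corep}). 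No closure under pushouts is claimed or needed. Without this simplicial-resolution step, the cell-induction as you've framed it does not go through.
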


As we have already noticed this Theorem cannot be directly extended to 
arbitrary finite product sketches.  Our main goal, however, is to show that a variant 
of this rigidification result still holds. Given a finite product sketch $\B$ we can consider
the homotopy category of homotopy algebras over $\B$, that is the category obtained 
by taking the category of all homotopy $\B$-algebras and inverting weak equivalences. 
We will show that the following holds.

\begin{theorem}
 \label{MAINI COR}
For any finite product sketch $\B$ there exists a simplicial multi-sorted 
algebraic theory $\FB$ such that the homotopy
category of homotopy $\B$-algebras is equivalent to the homotopy 
category of strict $\FB$-algebras. 
\end{theorem}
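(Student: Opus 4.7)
The plan is to deduce Theorem \ref{MAINI COR} from an underlying Quillen equivalence of simplicial model categories. Specifically, the strategy is: construct explicitly a simplicial multi-sorted algebraic theory $\FB$ associated with $\B$, equip $\spaces^{\B}$ with a model structure whose fibrant objects are the homotopy $\B$-algebras, equip strict simplicial $\FB$-algebras with their standard projective model structure, and exhibit a Quillen equivalence between the two, whereupon passing to homotopy categories yields the statement. This parallels the Badzioch--Bergner proof of Theorem \ref{MULTISORTED THM}, with the preliminary step of manufacturing $\FB$ from the sketch $(\B,\kappa)$.

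For the construction of $\FB$, I would take the set of sorts to be $S := \mathrm{Ob}(\B)$ and proceed in two stages. The first stage $\bo F_{\ast}\B$ is the strict multi-sorted algebraic theory freely generated by $\B$: its objects are indexed by finite $S$-tuples $\underline{s} = (s_{1},\dots,s_{n})$, each $t_{\underline{s}}$ is formally declared a categorical product of the $t_{s_{i}}$, and morphisms are freely generated by the morphisms of $\B$ together with the formal projections $t_{\underline{s}}\to t_{s_{k}}$, modulo only the relations coming from composition in $\B$ and the universal property of products. The key point is that the cones in $\kappa$ are not forced to be product cones at this stage; rather, each $\alpha\in\kappa$ produces a canonical comparison morphism $c_{\alpha}\colon \alpha_{0} \to t_{(\alpha_{1},\dots,\alpha_{|\alpha|})}$ in $\bo F_{\ast}\B$. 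The second stage $\FB := \overline{\bo F_{\ast}\B}$ replaces each $c_{\alpha}$ by a homotopy equivalence via a simplicial resolution (for example by an iterated mapping-cylinder construction), yielding a genuinely simplicial multi-sorted algebraic theory. A strict $\FB$-algebra therefore sends each $c_{\alpha}$ to a weak equivalence, so its restriction along the evident inclusion $\B\hookrightarrow \FB$ (sending $b$ to $t_{(b)}$) is automatically a homotopy $\B$-algebra.

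To set up the comparison, I would Bousfield-localize the projective model structure on $\spaces^{\B}$ at a set of maps chosen so that the fibrant objects are exactly the fibrant homotopy $\B$-algebras, and take the projective structure on strict simplicial $\FB$-algebras. Restriction along $\B\hookrightarrow \FB$ is then right Quillen; its left adjoint is left Kan extension, possibly combined with a resolution to ensure cofibrancy. The goal is to prove this adjunction is a Quillen equivalence, from which Theorem \ref{MAINI COR} follows formally by passage to homotopy categories. The main obstacle will be verifying that the derived unit is a weak equivalence on every cofibrant homotopy $\B$-algebra $X$: one must show that the simplicial resolution built into $\overline{\bo F_{\ast}\B}$ is fine enough to convert each $c_{\alpha}$ into an actual weak equivalence after left Kan extension, but coarse enough that no spurious relations are imposed. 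I expect this to follow from an inductive cell-attachment argument along the cones in $\kappa$ combined with the sortwise detection of weak equivalences between product-preserving simplicial functors, and to constitute the technical heart of the referenced Theorems \ref{MAINI} and \ref{MAINII}, from which the present corollary is a direct consequence.
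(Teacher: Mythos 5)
Your top-level plan — build $\FB$ from $\B$, equip $\spaces^{\B}$ and strict $\FB$-algebras with model structures, and prove a Quillen equivalence — agrees with the paper's strategy, but your construction of $\FB$ has two genuine gaps. The first is your choice $S := \mathrm{Ob}(\B)$. The paper never does this; before any simplicial construction it reduces $\B$ to a multi-sorted semi-theory in two stages (Lemmas~\ref{BB} and~\ref{BBB}): it forms $\BBM := \B\times\kapp\times J$, with $\kapp$ the indiscrete category on the set of cones, using the triples $(\alpha_i,\alpha,1)$ for $\alpha\in\kappa$, $0 < i \leq |\alpha|$ as sorts, and then freely adjoins objects to obtain an honest $S$-sorted semi-theory $\BBB$. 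This spreading-out is necessary because in your setup a cone vertex $\alpha_0$ may also serve as a projection target of some other cone (or even of $\alpha$ itself), and one object may be the vertex of several distinct cones, so the axioms in Definition~\ref{SEMITH DEF} (one cone per $n$-tuple, cone vertices distinct from sorts) would be violated. The filtration and tree-length arguments of Sections~\ref{COMPLETION}--\ref{PF LEMMA} depend critically on that rigid indexing.

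The second gap is your second stage, ``replace each $c_\alpha$ by a homotopy equivalence via a simplicial resolution (iterated mapping cylinders).'' This is not the paper's construction and does not have an obvious precise meaning compatible with finite products. In the paper the simplicial resolution $\FC := \bo F_\ast\BBB$ is the Dwyer--Kan cotriple resolution; it inverts nothing and is weakly equivalent to $\BBB$. The step that turns the structure cones into honest product cones is the \emph{strict} Barr--Wells completion, applied degreewise to $\FC$, yielding $\FB := \overline{\FC}$; the purpose of the resolution is not to invert $c_\alpha$ but to make the simplicial category degreewise free, which is what drives the combinatorial proof of Lemma~\ref{FC UNIT}. Moreover, the paper's own discussion after Example~\ref{NLOOP EXAMPLE} is a warning against what you propose: strictly forcing cones to become products yields simplicial abelian groups where one expects only double loop spaces, so the ``calibration'' you flag as the main obstacle is precisely what the two-stage resolve-then-complete construction achieves, and your sketch gives no mechanism to do so while preserving the finite-product (algebraic-theory) structure. (Minor: you cite a ``Theorem~MAINI'' which does not exist; the relevant technical results are Theorem~\ref{mainIII}, Proposition~\ref{QUILLEN PAIR}, and Lemma~\ref{FC UNIT}.)
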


Thus, the homotopy structure described by any finite product sketch is equivalent 
to some algebraic structure, but in general the sketches describing these 
two structures will be different and the associated algebras will be simplicial, i.e. bisimplicial sets. 

As an application of  Theorem \ref{MAINI COR} we  partially resolve another 
natural question related to homotopy algebras over finite product sketches. Namely, assume that 
$$G\colon (\B_{1}, \kappa_{1}) \to (\B_{2}, \kappa_{2})$$ 
is a morphism of sketches.  That is, $G$ is a functor such that for any $\alpha\in \kappa_{1}$
the cone $G\alpha$ is in $\kappa_{2}$.
Obviously if $X\colon (\B_{2}, \kappa_{2}) \to \spaces$
is a homotopy $(\B_{2}, \kappa_{2})$-algebra then $G^{\ast}X := XG$ is a homotopy 
$(\B_{1}, \kappa_{1})$-algebra. One can ask what conditions on $G$ guarantee that 
the functor $G^{\ast}$ is an equivalence of the homotopy categories of homotopy algebras. 
This can be answered as follows. Since the passage from a finite product sketch $(\B, \kappa)$
to its associated multi-sorted algebraic theory $\FB$ is natural in $(\B, \kappa)$ a morphism 
of sketches $G\colon (\B_{1}, \kappa_{1}) \to (\B_{2}, \kappa_{2})$ yields a functor 
of the simplicial algebraic theories $G\colon \FBO \lra \FBT$ (provided $G$ is injective). We will prove the following Theorem in Section \ref{bigpf}. 

\begin{theorem}
\label{MAINII}
Consider a morphism  of finite product sketches 
$$G\colon (\bo{B_1}, \kappa_{1}) \lra (\bo{B_2}, \kappa_{2})$$
with the property that $G$ is an injection on the sets of objects of $\bo{B_1}$ and $\bo{B_2}$.  Each of the following holds:
\begin{itemize}
\item[i)]  If the induced functor $$G\colon \FBO \lra \FBT$$ is a weak r-equivalence of simplicial categories, i.e. $G$ has the property that for any 
two objects $B_1$ and $B_2$ in $\FBO$, the induced map $ \hom(B_1,B_2)\ra  \hom(GB_1,GB_2)$ is a weak equivalence and every object in $\FBT$ is a retract of an object in the image of $G$, then $G$ induces an equivalence of the homotopy categories of homotopy algebras.
\item[ii)]
If we assume that $G$ is also surjective on the sets of objects of $\bo{B_1}$ and $\bo{B_2}$ and a bijection on the sets of cones, then $G$ induces an equivalence of the homotopy category of homotopy algebras iff the induced functor  $$G\colon \FBO \lra \FBT$$ is a weak equivalence of simplicial categories. 
\end{itemize}
\end{theorem}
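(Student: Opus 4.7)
The plan is to reduce the statement to a comparison of strict algebras over the associated simplicial multi-sorted algebraic theories and then to apply the standard Morita-style comparison theorem for strict algebras under a morphism of theories. Since the construction $(\B,\kappa)\mapsto \FB$ is natural in the sketch, a morphism $G\colon \BO\lra\BT$ (injective on objects) fits into a commuting square whose horizontal arrows are the equivalences of Theorem \ref{MAINI COR} and whose vertical arrows are restriction along $G$ on one side and along the induced simplicial functor $G\colon\FBO\lra\FBT$ on the other. Thus it suffices to prove the analog of (i) and (ii) with ``homotopy $\B$-algebra'' replaced by ``strict $\FB$-algebra.''

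For (i), I would equip each functor category $\spaces^{\FBO}$ and $\spaces^{\FBT}$ with the projective model structure, transfer it to the subcategories of strict algebras, and consider the Quillen adjunction $G_{!}\dashv G^{\ast}$ with $G^{\ast}$ the restriction functor. Because weak equivalences and fibrations on strict algebras are detected objectwise, $G^{\ast}$ is automatically right Quillen. To see this is a Quillen equivalence one checks (a) that for every cofibrant strict $\FBO$-algebra $A$ the unit $A\to G^{\ast}G_{!}A$ is a weak equivalence, and (b) that $G^{\ast}$ reflects weak equivalences between fibrant objects. For (a), the hypothesis that $G$ is fully faithful up to weak equivalence implies the statement on free strict algebras generated by representables (there it reduces to comparing mapping spaces in $\FBO$ with mapping spaces in $\FBT$), and one extends to an arbitrary cofibrant $A$ by resolving through such free algebras and using cofibrancy to preserve the weak equivalence. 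For (b), the retract hypothesis is exactly what is needed: given a map $f\colon X\to Y$ of fibrant strict $\FBT$-algebras such that $G^{\ast}f$ is an objectwise weak equivalence, every object $c\in\FBT$ is a retract of some $G(B)$, so $f_{c}$ is a retract of $f_{G(B)}=(G^{\ast}f)_{B}$, hence itself a weak equivalence.

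For (ii), the forward implication is (i). Conversely, assume $G^{\ast}$ induces an equivalence on homotopy categories under the added hypothesis that $G$ is surjective on objects and bijective on cones. Surjectivity on objects makes essential surjectivity of $\FBO\to\FBT$ automatic, so there is no retract closure to verify. To extract the weak equivalence on mapping spaces, I would evaluate the derived equivalence on the strict algebras represented by objects of $\FBT$: the Yoneda-type identification of derived mapping spaces between such representables with the simplicial hom-spaces of the theories forces the induced map $\hom_{\FBO}(B_{1},B_{2})\to\hom_{\FBT}(GB_{1},GB_{2})$ to be a weak equivalence. The bijection-on-cones condition is used here to ensure that the free/representable strict algebras on $\FBT$ really do pull back along $G^{\ast}$ to the corresponding free algebras on $\FBO$, so that no mismatch of the underlying sorts or product-preservation constraints obstructs the Yoneda comparison.

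The main obstacle I expect is the unit calculation in (a): controlling the left Kan extension $G_{!}$ along a functor of \emph{simplicial} categories that is only a weak r-equivalence, and showing that after evaluating on objects in the image of $G$ (or retracts thereof) the unit on a cofibrant strict algebra remains a weak equivalence. The key technical input is that a cofibrant strict algebra admits a resolution by free algebras on representables, for which the comparison is formal from the hom-space weak equivalences, and that the cofibrant approximation passes this comparison through homotopy colimits; balancing these two requires careful bookkeeping of the transferred model structure on strict algebras over a simplicial theory.
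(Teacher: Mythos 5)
Your proposal follows essentially the same approach as the paper: reduce via Theorem \ref{MAINI COR} (and naturality of $\B\mapsto\FB$) to a Morita-style comparison of strict algebras over $\FBO$ and $\FBT$, then argue via bar/free resolutions for the unit, a retract-of-maps argument for reflection of weak equivalences, and a Yoneda identification of derived mapping complexes for the converse in part (ii). The paper carries out the reduction through the intermediate multi-sorted semi-theory $\BBB$ and encapsulates the strict-algebra comparison in Lemma \ref{EQ FOR ALG} (following Schwede), but the underlying argument is the same as yours.
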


This  fact parallels  the result of Dwyer and Kan \cite[2.5]{dwyer-kanI}
who proved an analogous statement for functors between sketches of the form described in 
Example \ref{WEAKEQ EXAMPLE}, and a Theorem of Badzioch \cite[1.6]{badziochIII}, that gives
a similar criterion for functors between single-sorted semi-theories, i.e. finite product sketches of a specific 
type (cf. Definition \ref{SEMITH DEF}).

%
%

\section{\bf Organization of Paper}

We start in Section  \ref{Multi-Sorted Semi-Theories} by considering a specific type of 
finite product sketch which we refer to as a multi-sorted
semi-theory, and in Sections \ref{SET UP} through \ref{PF LEMMA}
we show that we can rigidify homotopy algebras over multi-sorted
semi-theories by constructing an associated multi-sorted algebraic theory.
In particular in Section \ref{SET UP} we show that the setup, paralleling 
\cite{badziochIII}, can be used to  define  model category structures for the categories 
of homotopy algebras as well as strict algebras over a multi-sorted semi-theory.  
In Section \ref{COMPLETION} we show that for any  multi-sorted semi-theory 
$\C$ we can construct a multi-sorted algebraic theory $\CC$ without changing the 
category of strict algebras. We also give an explicit,  combinatorial constriction of $\CC$ 
in the case when $\C$ is a free semi-theory. 
In Section \ref{INITIAL} the initial  semi-theory $\Prj$ is introduced and it is shown that it can be 
used to detect weak equivalences  in the category
of homotopy algebras over a multi-sorted semi-theory.
In Section \ref{PF LEMMA} we  complete the argument showing that homotopy algebras over
a multi-sorted semi-theory can be rigidified as strict algebras
over a certain multi-sorted algebraic theory. In Section \ref{PF THM} we prove 
the variant of Theorem  \ref{MAINII} for  multi-sorted semi-theories.
Given an arbitrary finite product sketch we show in Section \ref{assmsst} that we can construct
an associated multi-sorted semi-theory for which the homotopy category of homotopy algebras 
is equivalent to the one defined by the original sketch. Using this result in Section \ref{bigpf} 
we prove Theorems \ref{MAINI COR} and \ref{MAINII}.

%
%

\section{\bf Multi-Sorted Semi-Theories}
\label{Multi-Sorted Semi-Theories}

While our ultimate goal in this paper is to show that we can  rigidify algebras over arbitrary finite product
sketches in the sense of Theorem \ref{MAINI COR}, we will first  consider the rigidification problem for a specific type of finite product 
sketch, which we will call  multi-sorted semi-theories.

\begin{definition}
\label{SEMITH DEF}
Let $S$ be a set. An \emph{$S$-sorted semi-theory} is a finite product sketch $(\C, \kappa)$
satisfying the following properties:
\begin{itemize}
\item [i)]  Objects $c_{\underline{s}}\in \C$ are indexed by $n$-tuples
$\underline{s} = (s_{1}, \dots, s_{n})$ (with possible repetitions) where $s_{i}\in S$ and $n\geq 0$.
By abuse of notation for $s_{1}\in S$ we will write $c_{s_{1}}$ to denote the object of $\C$ indexed by the
$1$-tuple $(s_{1})$.  
\item [ii)] For any $n$-tuple $\underline{s} = (s_{1}, \dots, s_{n})$ 
there is a unique $n$-fold cone $\alpha^{\underline{s}}\in \kappa$
such that  $\alpha^{\underline{s}}_{0} = c_{\underline{s}}$,  
$\alpha^{\underline{s}}_{k} = c_{s_{k}}$ for $k=1, \dots, n$. 
Moreover, every cone in $\kappa$ is of such form. 
\end{itemize}
\end{definition}

Given an $S$-sorted semi-theory $\C$ for simplicity we will denote the $k$-th  projection in the cone 
$\alpha^{\underline{s}}$ by $p^{\underline{s}}_{k}$ instead of $p^{\alpha^{\underline{s}}}_{k}$. 
We will say that $\C$ is a multi-sorted semi-theory if $\C$ is $S$-sorted for 
some set $S$. 

Notice that the  definition of an $S$-sorted semi-theory parallels that of an $S$-sorted algebraic theory 
(\ref{ALGTH DEF}).  The only difference is that we do not assume that the cones in $\kappa$ 
are product cones.  Our first goal will be to show that  variants of  Theorems 
\ref{MAINI COR} and \ref{MAINII}  hold for multi-sorted semi-theories.

\begin{theorem}
\label{mainIII}
 For any $S$-sorted semi-theory $\CATC$ there exists an $S$-sorted
algebraic theory $\bFC$ such that the homotopy category of homotopy $\CATC$-algebras is equivalent to the homotopy category of strict
$\bFC$-algebras.  Moreover, the construction of  $\bFC$ is functorial in $\CATC$.
\end{theorem}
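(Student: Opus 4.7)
The plan is to adapt the strategy of Badzioch \cite{badziochI} to the semi-theory setting, comparing model structures on diagram categories via Quillen equivalences. First, I would put a model structure on the diagram category $\spaces^{\C}$ in which the fibrant objects are precisely the (fibrant) homotopy $\C$-algebras; concretely, this is a left Bousfield localization of the projective model structure on $\spaces^{\C}$ with respect to the maps associated to the cones in $\kappa$. In parallel, I would equip the category $\alg{\C}$ of strict $\C$-algebras with a model structure transferred along the forgetful functor $\alg{\C}\to \spaces^{\C}$, so that weak equivalences and fibrations are detected objectwise. The resulting Quillen adjunction between these two model categories is the basic comparison tool.

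Next I would construct the associated $S$-sorted algebraic theory $\CC$: since the cones in a semi-theory are merely distinguished cones and need not be categorical products in $\C$, one obtains $\CC$ by freely adjoining genuine categorical products of the basic sort objects $c_s$ in a universal way. The central fact, to be established in the COMPLETION step, is that strict $\CC$-algebras are naturally equivalent to strict $\C$-algebras with values in any category with finite products, because a product-preserving functor out of $\CC$ is determined by its restriction to the basic sort objects subject to exactly the cone conditions defining a strict $\C$-algebra. This reduction means $\alg{\CC}\simeq \alg{\C}$ as categories, and in fact as model categories once the structures of the previous paragraph are installed.

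Because the rigidification of homotopy $\C$-algebras must generally involve higher homotopies, the final step is to replace $\CC$ by a simplicial enhancement $\bFC$, built so as to be an $S$-sorted simplicial algebraic theory. Concretely, $\bFC$ arises by a derived, simplicial analogue of the completion of $\C$ to $\CC$, encoding the homotopical data needed to strictify. Functoriality in $\C$ is built into the construction at every stage, since each step is defined by a universal property relative to $\C$. It remains to prove that the Quillen adjunction between $\spaces^{\C}$ (with the localized model structure) and $\alg{\bFC}$ is a Quillen equivalence; this will yield the asserted equivalence of homotopy categories.

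The main obstacle is verifying that the derived unit and counit of this adjunction are weak equivalences on fibrant-cofibrant objects. Here I would invoke the initial semi-theory $\Prj$ from the INITIAL section: its universal mapping property allows one to detect weak equivalences of homotopy $\C$-algebras by pulling back along morphisms from $\Prj$, reducing the check to a tractable universal situation where the rigidification is essentially tautological. Combining this detection principle with the transferred model structure and the identification $\alg{\CC}\simeq \alg{\C}$ then forces both derived natural transformations to be objectwise weak equivalences, completing the equivalence of homotopy categories and giving the functorial assignment $\C\mapsto \bFC$ claimed in the theorem.
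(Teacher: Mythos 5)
Your outline captures the paper's broad architecture: a localized model structure for homotopy algebras, a transferred one for strict algebras, the Barr--Wells completion, a derived/simplicial version of it, and detection over the initial semi-theory $\Prj$. However, two points are glossed over, and together they constitute a genuine gap.

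First, $\bFC$ is not built by ``enhancing'' the completion $\CC$; rather, one first forms the Dwyer--Kan free simplicial resolution $\FC$ of $\C$, so that each level $\bo F_m\C$ is a \emph{free} semi-theory, and only then applies the Barr--Wells completion levelwise to obtain $\bFC = \overline{\bo F_\ast \C}$. The order is essential: freeness of each $\bo F_m\C$ is what gives the explicit labelled-tree description of $\overline{\bo F_m\C}$, and that combinatorial description is the engine of the whole proof. Your phrase ``derived, simplicial analogue of the completion'' gestures in the right direction but does not say how to derive, and a naive reading (complete $\C$ to $\CC$, then resolve) loses exactly the control the argument needs.

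Second, detection over $\Prj$ is only one ingredient, not a reduction to a tautology. After establishing Quillen pairs and the equivalence $\alg{\FC}\simeq\alg{\bFC}$, the whole theorem reduces (via the ``compare'' lemma) to showing the unit $\eta_{\FC_{\underline s}}\colon \FC_{\underline s}\to J_{\FC}K_{\FC}\FC_{\underline s}$ is a local equivalence. The paper does this by a nontrivial filtration of $\overline{\FC}_{\underline s}$ simultaneously by $\FC$-subdiagrams and by $\Prj$-subdiagrams, a pullback lemma on the induced mapping spaces, an acyclic-fibration statement using the easy $\Prj$-detection, and a hocolim-to-colim comparison. All of this rests on the tree combinatorics from the free resolution. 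Your proposal supplies no mechanism for this step, which is where the actual difficulty lies.
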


\begin{theorem}
\label{mainIV}
Let $\C$, $\C'$ be $S$-sorted and $S'$-sorted semi-theories respectively, and let $G\colon \C \to \C'$ be a morphism of finite 
product sketches.  We have the following
\begin{itemize}
\item[i)] If the induced functor $$G \colon \bFC \ra {\bFC'}$$ between the associated multi-sorted theories is a weak r-equivalence, then $G$ induces an equivalence of the homotopy categories of homotopy algebras.
\item[ii)] If in addition there exists a bijection of sets $\varphi \colon S\ra S'$ and $G$ preserves sorts, i.e. $G(c_{\underline s}) = c'_{ \varphi(\underline s)}$ for any $n$-tuple $\underline s$ in $S$
then the  functor $G$ induces an equivalence of the homotopy categories of homotopy algebras
iff the induced functor 
$$G \colon \bFC \ra {\bFC'}$$ 
between the associated multi-sorted theories is a weak equivalence of simplicial categories.
\end{itemize}
\end{theorem}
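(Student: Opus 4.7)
The plan is to reduce Theorem \ref{mainIV} to a statement about strict algebras over the associated simplicial multi-sorted algebraic theories, using the rigidification provided by Theorem \ref{mainIII}, and then invoke a Quillen-equivalence criterion for such theories. By the naturality clause of Theorem \ref{mainIII}, the morphism $G\colon \C\to \C'$ fits into a commutative square whose horizontal arrows are the rigidification equivalences from $\mathrm{Ho}(\spaces^{\C})$ to $\mathrm{Ho}(\alg{\bFC})$ and from $\mathrm{Ho}(\spaces^{\C'})$ to $\mathrm{Ho}(\alg{\bFC'})$, and whose vertical arrows are restriction along $G$. Consequently $G^{\ast}$ is an equivalence on homotopy algebras iff the induced restriction functor on strict algebras over the associated simplicial multi-sorted algebraic theories is an equivalence of homotopy categories.

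For part (i) it then suffices to prove the following fact: if $H\colon \T\to \T'$ is a weak r-equivalence of simplicial multi-sorted algebraic theories, then the Quillen adjunction $(H_{!}, H^{\ast})$ between the model categories of strict $\T'$- and $\T$-algebras (with the objectwise model structure set up in Section \ref{SET UP}) is a Quillen equivalence. Since weak equivalences of strict algebras are detected objectwise and every object of $\T'$ is a retract of one in the image of $H$, the restriction $H^{\ast}$ preserves and reflects weak equivalences. Homotopy full faithfulness of $H$ on mapping spaces, combined with a standard cell-attachment argument applied to free algebras, then shows that the derived unit is a weak equivalence, yielding the Quillen equivalence.

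For part (ii) the converse direction uses the stronger hypotheses. The bijection $\varphi\colon S\to S'$ together with sort preservation guarantees that the free strict $\bFC$-algebra $F_{s}$ on one generator of sort $s$ is sent by the left Kan extension $G_{!}$ to the free strict $\bFC'$-algebra $F_{\varphi(s)}$, and by the Yoneda principle the derived mapping space $\Map(F_{s_{1}}, F_{s_{2}})$ in strict $\bFC$-algebras recovers the morphism space $\bFC(c_{s_{2}}, c_{s_{1}})$. An equivalence of homotopy categories on the right must therefore induce weak equivalences on these derived mapping spaces, which together with surjectivity on sorts forces $G\colon \bFC\to \bFC'$ to be a weak equivalence of simplicial categories.

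The main obstacle will be executing part (i) under only a weak r-equivalence rather than an honest weak equivalence: the retract clause obliges us to carry out a retract-absorption argument in the category of strict algebras to conclude that the derived left Kan extension is essentially surjective, in the spirit of the argument given by Badzioch in \cite{badziochIII} for single-sorted semi-theories. Once this is handled, part (ii) should follow cleanly by testing on free algebras as sketched above.
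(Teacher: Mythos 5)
Your proposal matches the paper's proof: both reduce Theorem \ref{mainIV} via the rigidification equivalences of Theorem \ref{mainIII} (Propositions \ref{C QE FC}, \ref{KC JC}, \ref{ALG ISO}) to a Quillen-equivalence criterion for the strict-algebra model categories over $\bFC$ and $\bFC'$, which is precisely the content of the paper's Lemma \ref{EQ FOR ALG}. Your sketch of that lemma---retract absorption to show $H^{\ast}$ reflects weak equivalences for (i), a Schwede-style reduction to free algebras for the derived unit, and testing on free/corepresented algebras via Yoneda for (ii)---is the same argument the paper carries out (using the bar resolution $|\mathcal{B}(X)|$ where you say ``cell-attachment argument'').
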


After proving these facts in Sections \ref{PF LEMMA} and \ref{PF THM} we will show how they can be used to obtain Theorems \ref{MAINI COR} and \ref{MAINII} 
in their whole generality.

From now until Section \ref{assmsst} it  will be convenient to fix the set $S$ and assume that all 
multi-sorted semi-theories are $S$-sorted. Consequently, all morphisms of semi-theories
will be assumed to preserve sorts and projections. The resulting category of  $S$-sorted semi-theories 
will be denoted by $\bo{SemiTh}_{S}$. Notice that the category of $S$-sorted algebraic theories 
$\bo{AlgTh}_{S}$ is a full subcategory of $\bo{SemiTh}_{S}$.

%
%

\section{\bf Model Categories of Strict and Homotopy Algebras}
\label{SET UP}

Our basic strategy for proving Theorem \ref{mainIII} will be to rephrase 
it in terms model categories and prove that the relevant model categories are 
Quillen equivalent. With this in mind our first task will be to introduce model 
category structures reflecting the homotopy theories of strict and homotopy algebras. 
Much of what is discussed here parallels the setup of  \cite{badziochI}, \cite{badziochIII}, and \cite{Bergner} 
so the presentation will be brief.

{\noindent \bf The model category of strict algebras.\ }

Let $\C$ be a multi-sorted semi-theory and let $\algc$ denote the full subcategory
of $\spaces^{\C},$ whose objects are strict algebras over $\C.$
We would like to get a model category structure on $\algc$ where $\C$ is 
a multi-sorted semi-theory.  To do this let us first consider the category 
$\algt$ when $\T$ is an $S$-sorted algebraic theory.  

We will use the following fact due to Kan.

\begin{theorem}[{\cite[11.3.2]{hirschhorn}}]
\label{hirsch}

Let $\M$ be a cofibrantly generated model category with a set
of generating cofibrations $I$ and generating acyclic cofibrations $J$.  
Let $\bo N$ be a category which has all small limits and small colimits
and for which there exists
a pair of adjoint functors:

$$F \colon \M \rightleftarrows {\bo N} \colon U$$
with $FI=\{Fu | u \in I\}$ and $FJ=\{Fv | v \in J \}$
and 
\begin{itemize}
\item[i)] $FI$ and $FJ$ permit the small object argument 

\item[ii)] $U$ takes colimits of pushouts along maps in $FJ$ to weak equivalences

\end{itemize}

\noindent Then there is a cofibrantly generated model category structure on $\bo N$
for which $FI$ is a set of generating cofibrations, $FJ$ is a set of generating
acyclic cofibrations and the set of weak equivalences is the set of maps which 
$U$ sends to weak equivalences in $\M$.  Furthermore, with respect to this model
category structure, $(F,U)$ is a Quillen pair.

\end{theorem}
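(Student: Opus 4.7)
The plan is to apply Kan's recognition principle for cofibrantly generated model categories (\cite[11.3.1]{hirschhorn}) to $\bo N$. I would first define the three classes of morphisms in $\bo N$ directly from the adjunction: a map $f$ is a \emph{weak equivalence} iff $U(f)$ is a weak equivalence in $\M$; a \emph{fibration} iff $f$ has the right lifting property with respect to $FJ$; and a \emph{cofibration} iff $f$ has the left lifting property with respect to every acyclic fibration. The two-out-of-three property and retract closure for weak equivalences are inherited from $\M$ through $U$, and the existence of small limits and colimits in $\bo N$ is part of the hypotheses.

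Next I would verify the technical hypotheses of the recognition theorem. The smallness of the domains of $FI$ and $FJ$ relative to $FI$-cell and $FJ$-cell is precisely condition (i). For the compatibility of the two generating sets with the class of weak equivalences, I would exploit the adjunction $(F,U)$: by transposing a lifting square, $f \in \bo N$ has the right lifting property with respect to $FI$ (respectively $FJ$) if and only if $U(f)$ has the right lifting property with respect to $I$ (respectively $J$) in $\M$. Since $\M$ is cofibrantly generated, this identifies $FI\text{-inj}$ in $\bo N$ with the preimage under $U$ of the acyclic fibrations of $\M$, and $FJ\text{-inj}$ with the preimage of the fibrations of $\M$. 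Hence $FI\text{-inj} = FJ\text{-inj} \cap \{\,f : U(f) \text{ is a weak equivalence}\,\}$, which is precisely the required identification.

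The main axiom, and the sole reason hypothesis (ii) is present, is that every relative $FJ$-cell complex in $\bo N$ must be a weak equivalence. Since a relative $FJ$-cell complex is by construction obtained as a (transfinite) colimit of pushouts along maps in $FJ$, this is exactly what (ii) asserts: $U$ sends such a colimit to a weak equivalence in $\M$, whence by definition the original map is a weak equivalence in $\bo N$. This is the step I expect to be the main obstacle in any concrete application, since in practice one must genuinely verify (ii) for the adjunction at hand; logically, however, it closes the last remaining axiom and produces the desired cofibrantly generated model structure with $FI$ as the generating cofibrations and $FJ$ as the generating acyclic cofibrations.

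Finally, the Quillen pair assertion is a formality: $F$ carries the generating (acyclic) cofibrations $I$ and $J$ of $\M$ to $FI$ and $FJ$, which are by construction the generating (acyclic) cofibrations of the new structure on $\bo N$, so $F$ preserves cofibrations and acyclic cofibrations. Equivalently, by adjunction $U$ preserves fibrations and acyclic fibrations, which is the other standard characterization of a Quillen adjunction.
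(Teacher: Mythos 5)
The paper does not supply a proof of this theorem: it is imported verbatim (with citation) from Hirschhorn, Theorem 11.3.2, so there is no in-paper argument to compare against. Evaluated against the standard proof in Hirschhorn, your argument is the right one and is essentially complete: you define weak equivalences via $U$, you use the adjunction to transpose lifting problems and thereby identify $FI\text{-inj}$ with $U^{-1}(\text{acyclic fibrations})$ and $FJ\text{-inj}$ with $U^{-1}(\text{fibrations})$, which gives both the containment $FI\text{-inj}\subseteq FJ\text{-inj}\cap W$ and the reverse inclusion needed for Kan's recognition theorem (\cite[11.3.1]{hirschhorn}), and you isolate hypothesis (ii) as the sole nontrivial input, ensuring relative $FJ$-cell complexes are weak equivalences.

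One small point you leave implicit: Kan's recognition theorem also requires that every $FJ$-cofibration is an $FI$-cofibration. This is formal — $J\subseteq I\text{-cof}$ in $\M$, and since $F$ is a left adjoint it preserves transfinite compositions, pushouts, and retracts, so $FJ\subseteq FI\text{-cof}$ and hence $FJ\text{-cof}\subseteq FI\text{-cof}$ — but it is a separate clause of the recognition theorem and is worth stating explicitly. With that noted, your proof is correct and matches the standard argument.
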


\begin{proposition}
\label{ALGTHMODSTR PROP}
Let $\T$ be an $S$-sorted algebraic theory. 
The category of strict $\T$-algebras  $\algt$ admits a model category structure 
defined by  the following classes of morphisms:

\begin{itemize}
\item[i)] weak equivalences are  objectwise weak equivalences;
\item[ii)] fibrations are objectwise fibrations;
\item[iii)] cofibrations are morphisms which have the left lifting property with respect
to acyclic fibrations.
\end{itemize}

\end{proposition}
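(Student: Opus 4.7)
The plan is to apply Theorem \ref{hirsch} (Kan's transfer theorem) with $\M = \spaces^{S}$, the category of $S$-indexed families of simplicial sets equipped with the product of the standard Quillen model structures. This category is cofibrantly generated with generating (acyclic) cofibrations obtained by inserting the usual simplicial generators into each of the $S$ coordinates. There is an evident forgetful functor $U \colon \algt \to \spaces^{S}$ sending a strict algebra $A$ to the family $(A(t_{s}))_{s\in S}$; because a strict $\T$-algebra is determined by its values on the basic sorts (we have $A(t_{\underline s}) \cong \prod_{k} A(t_{s_{k}})$ via the product cones in $\kappa$), a map $f$ between strict algebras is an objectwise weak equivalence (resp.\ fibration) on all of $\T$ if and only if $Uf$ is a weak equivalence (resp.\ fibration) in $\spaces^{S}$. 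Thus the classes in (i) and (ii) of the proposition agree with those defined by $U$.

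The left adjoint $F \colon \spaces^{S} \to \algt$ is the free strict $\T$-algebra functor: on a family $X = (X_{s})_{s\in S}$ we set $F(X)(t_{\underline s}) = \prod_{k} X_{s_{k}}$, with $\T$-action induced by the combinatorics of $\T$. Existence of $F$ follows either by direct construction or by left Kan extension, and the adjunction is standard. The category $\algt$ is complete (limits computed objectwise preserve products) and cocomplete (colimits exist by a reflection from $\spaces^{\T}$, or by the adjoint functor theorem applied to $U$), so the ambient hypotheses of Theorem \ref{hirsch} are satisfied.

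It remains to check the two conditions of Theorem \ref{hirsch}. Condition (i), that $FI$ and $FJ$ permit the small object argument, is immediate: domains of the generating (acyclic) cofibrations in $\spaces^{S}$ are small, and since $F$ is a left adjoint it preserves the smallness needed for the small object argument in $\algt$. The main obstacle is condition (ii): the forgetful functor $U$ must send transfinite compositions of pushouts of maps in $FJ$ to weak equivalences. Here one analyzes a pushout of a free map $F(f) \colon F(A) \to F(B)$ along an arbitrary map $F(A) \to Y$ in $\algt$, and shows that on each object $t_{\underline s}$ the resulting map $Y(t_{\underline s}) \to (Y \cup_{F(A)} F(B))(t_{\underline s})$ can be written as a transfinite composition built by attaching cells indexed by the combinatorics of $\T$ and the map $f$. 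Since $f$ is an acyclic cofibration in $\spaces^{S}$ and weak equivalences in $\spaces$ are preserved by pushouts along cofibrations and by transfinite composition, each stage of this filtration is an objectwise weak equivalence, hence so is the colimit. This step parallels the corresponding argument in \cite{badziochI} and \cite{Bergner}, adapted to the multi-sorted setting.

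Granted (i) and (ii), Theorem \ref{hirsch} produces a cofibrantly generated model structure on $\algt$ whose weak equivalences and fibrations are exactly those detected by $U$, and whose cofibrations are defined by the left lifting property against acyclic fibrations. By the remark in the previous paragraph, these coincide with the classes in (i)--(iii) of the proposition, completing the proof.
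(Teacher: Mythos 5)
Your overall strategy coincides with the paper's: transfer the objectwise model structure from $\spaces^{S}$ to $\algt$ along the adjunction $F \dashv U$ via Theorem~\ref{hirsch}, with $U(A)(s)=A(t_{s})$. However, the explicit description you give for the left adjoint is wrong. Setting $F(X)(t_{\underline s}) = \prod_{k} X_{s_{k}}$ defines a $\T$-diagram only when $\T$ is the initial $S$-sorted theory $\Prj$ consisting solely of projections; for a general theory the non-projection operations of $\T$ have nowhere to act on a bare product of the $X_{s}$, and even if one could cook up an action, the adjunction isomorphism $\Hom_{\algt}(F(X),A)\cong\Hom_{\spaces^{S}}(X,U(A))$ would fail (take $\T$ to be the theory of monoids and $X$ an arbitrary simplicial set: the free monoid on $X$ is a much bigger object than $X$). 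The correct free algebra involves a colimit over the operations of $\T$; the paper handles this by simply citing Bergner \cite[\S 4]{Bergner} for the existence of the sortwise free functors $F_{s}$ and then taking $F(Y)=\coprod_{s}F_{s}(Y(s))$ (coproduct in $\algt$, not an objectwise coproduct). Your hedge that ``existence of $F$ follows by left Kan extension'' points in the right direction, but the displayed formula should be removed or replaced.

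Two smaller points. First, ``since $F$ is a left adjoint it preserves the smallness needed'' is not automatic: what makes it work is that $U$ preserves filtered colimits (filtered colimits in $\algt$ are computed objectwise and commute with finite products), and this should be said. Second, you actually sketch the verification of conditions (i) and (ii) of Theorem~\ref{hirsch}, which the paper simply leaves implicit; that added detail is welcome, though your treatment of (ii) ultimately defers to the corresponding arguments in \cite{badziochI} and \cite{Bergner}, which is essentially what the paper does too.
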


\begin{proof}
In \cite[\S 4]{Bergner} Bergner showed that  
for each $s\in S$  the evaluation functor
$$U_{s}\colon \algt \lra \spaces, \ \ \ \ U_{s}(X)  = X(t_{s})$$
has a left adjoint $F_{s} \colon \spaces \lra \algt$.

Consider the set $S$ as a category with identity morphisms only and let $\spaces^{S}$
be the category of functors $S \to \spaces$. Notice that objects of $\spaces^{S}$ are just 
assignments that associate to each element $s\in S$ a space $Y_{s}$. The category 
$\spaces^{S}$ has a model structure with fibrations, cofibrations, and weak equivalences 
defined objectwise.  The  forgetful functor  
$$U\colon \algt \to \spaces^{S}, \ \ \ U(X)(s) = X(t_{s})$$
has a left adjoint $F$ defined for $Y\in \spaces^{S}$ by 
$$F(Y) = \coprod_{s\in S} F_{s}(Y(s))$$
where the 
coproduct is taken in $\algt$. 

Note that the model category  $\spaces^S$ is cofibrantly generated with the set of generating cofibrations
$$I = \{ \partial{\Delta}[n]_{s_i} \ra \Delta [n]_{s_i} | n\geq 0, s_i \in S\}$$
and a set of generating acyclic cofibrations
$$J = \{V[n,k]_{s_i} \ra \Delta [n]_{s_i} | n\geq 1, 0\leq k \leq n, s_i \in S\}.$$
Here $\partial{\Delta}[n]_{s_i} \in \spaces^S$ is defined by
$\partial{\Delta}[n]_{s_i}(s_j)= \partial{\Delta}[n]$ for $s_j=s_i$ and $\partial{\Delta}[n]_{s_i}(s_j)$ is 
the empty set if $s_j \neq s_i$.  The objects $\Delta [n]_{s_i}$, and $V[n,k]_{s_i}$ in $\spaces^S$ are  
defined in a similar matter. Applying Theorem \ref{hirsch} to the adjoint pair $(F, U)$ 
we get a model category structure on $\algt$ as described in the statement. 
\qed
\end{proof}

Proposition \ref{ALGTHMODSTR PROP} can be easily generalized to the case of arbitrary 
multi-sorted semi-theories.

\begin{corollary}
Let $\C$ be an $S$-sorted semi-theory. 
The category of strict $\C$-algebras,  $\algc$ admits a model category structure 
defined by  the following classes of morphisms:

\begin{itemize}
\item[i)] weak equivalences are  objectwise weak equivalences;
\item[ii)] fibrations are objectwise fibrations;
\item[iii)] cofibrations are morphisms which have the left lifting property with respect
to acyclic fibrations.
\end{itemize}

\end{corollary}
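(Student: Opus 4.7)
The strategy is to transfer the objectwise model structure on $\spaces^S$ along an adjunction
\[
F\colon \spaces^{S} \rightleftarrows \algc \colon U,\qquad U(A)(s) = A(c_s),
\]
using Kan's theorem (Theorem \ref{hirsch}) with the generating sets $I$ and $J$ already introduced in the proof of Proposition \ref{ALGTHMODSTR PROP}. The resulting model structure on $\algc$ will automatically have the weak equivalences, fibrations, and cofibrations described in the statement, so the task reduces to constructing $F$ and verifying Kan's two hypotheses in this more general setting.

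For the adjunction, I would first argue that $\algc$ is a reflective subcategory of $\spaces^{\C}$. The strict-algebra condition of Definition \ref{SEMITH DEF} amounts to an orthogonality condition against a canonical map of representables
\[
\coprod_{k=1}^{|\alpha^{\underline s}|}\Hom_{\C}(c_{s_k},-)\longrightarrow \Hom_{\C}(c_{\underline s},-)
\]
in $\spaces^{\C}$, one such map for each tuple $\underline s$; standard results on orthogonal reflections then yield a left adjoint $R\colon \spaces^{\C}\to \algc$ to the inclusion, and make $\algc$ cocomplete. For each $s\in S$, define $F_s\colon \spaces\to \algc$ as the composite of the standard left adjoint $\spaces\to \spaces^{\C}$ to evaluation at $c_s$ with $R$; this is left adjoint to $U_s(A) = A(c_s)$. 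Setting $F(Y) := \coprod_{s\in S} F_s(Y(s))$, with coproduct taken in $\algc$, then gives the required left adjoint to $U$. Crucially, none of these steps require the cones in $\kappa$ to be product cones in $\C$, so the construction works for semi-theories just as it does for algebraic theories.

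Smallness of $FI$ and $FJ$ follows from smallness of $I$ and $J$ in $\spaces^S$ together with $F$ being a left adjoint. The main obstacle is the acyclicity hypothesis: $U$ must send every transfinite composition of pushouts along maps in $FJ$ to an objectwise weak equivalence. The plan is to analyze such pushouts cellwise. Since $R$ preserves colimits, a pushout in $\algc$ of a generating map $F(V[n,k]_s \to \Delta[n]_s) \in FJ$ against an algebra $A \to A'$ is computed by first forming the pushout $P$ in $\spaces^{\C}$, then applying $R$. At sorted generators $R$ is the identity, so $A(c_t) \to A'(c_t) = P(c_t)$ is a pushout along copies of the horn inclusion $V[n,k] \to \Delta[n]$ indexed by $\Hom_{\C}(c_s, c_t)$, hence anodyne; at multi-sort objects $A'(c_{\underline t}) = \prod_k A'(c_{t_k})$ is a finite product of such anodyne extensions, again a weak equivalence. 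Anodyne extensions are closed under transfinite composition, supplying the last input needed for Kan's theorem and completing the proof.
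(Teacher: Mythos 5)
The paper's own proof is much shorter and takes a different route: it invokes the Barr--Wells completion $\Phi_\C\colon\C\to\CC$ (Proposition \ref{COMPLETION ISO}), which gives an \emph{equivalence of categories} $\alg{\C}\simeq\alg{\CC}$ where $\CC$ is an honest $S$-sorted algebraic theory; the model structure on $\alg{\CC}$ supplied by Proposition \ref{ALGTHMODSTR PROP} then transfers verbatim across this equivalence of categories. Nothing further needs to be checked. Your proposal instead attempts to re-run Kan's transfer theorem directly on $\alg{\C}$, and while the setup (reflectivity of $\alg{\C}$ in $\spaces^{\C}$ via strict locality, construction of $F_s$ and $F$, smallness) is fine, the verification of the acyclicity hypothesis contains a genuine gap.

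The problem is the cellwise analysis. You claim that the pushout $A'=R(P)$ of a generating trivial cofibration $F_s(V[n,k])\to F_s(\Delta[n])$ satisfies $A'(c_t)=P(c_t)$ because ``at sorted generators $R$ is the identity,'' and moreover that $P(c_t)$ is a pushout of $A(c_t)$ along copies of $V[n,k]\hookrightarrow\Delta[n]$ indexed by $\Hom_\C(c_s,c_t)$. Neither claim is justified, and both fail in general. First, $F_s(K)(c_t)=R(K\times\C_{c_s})(c_t)$ is \emph{not} $K\times\Hom_\C(c_s,c_t)$: the reflection $R$ freely adjoins the values forced by non-projection operations. For instance, if $\C$ has a ``constant'' $\mu\colon c_{\underline{s}^0}\to c_s$, then already $F_s(K)(c_s)$ must contain a distinguished basepoint absent from $K\times\Hom(c_s,c_s)$. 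Second, and more seriously, $R$ is not the identity at single-sort objects applied to a pushout $P$ of strict algebras. The value $P(c_{\underline t})$ for a tuple $\underline t=(t_1,\dots,t_m)$ with $m\neq 1$ is a pushout of products, which is not a product of pushouts, so $P$ is far from strict; reflecting forces $R(P)(c_{\underline t})=\prod_k R(P)(c_{t_k})$, and any morphism $\varphi\colon c_{\underline t}\to c_{t'}$ in $\C$ must then be extended to the whole product, which in general requires freely enlarging $R(P)(c_{t'})$. So the pushout in $\alg{\C}$ does not reduce to an objectwise anodyne pushout at single sorts, and the argument as written does not go through. (This is also why the paper's Proposition \ref{ALGTHMODSTR PROP} quietly leans on Bergner's delicate verification rather than a cellwise count; the present corollary deliberately sidesteps all of this by passing through $\CC$.)
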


\begin{proof}
By \cite[Chapter 4, Theorem 3.6]{Barr} we get that any $S$-sorted semi-theory  $\C$
has an associated $S$-sorted algebraic theory $\CC$ with the property that
$\algc$ and $\algcc$ are equivalent categories, so the statement follows directly from 
Proposition \ref{ALGTHMODSTR PROP}.
\qed
\end{proof}

\vspace{10mm}
{\noindent \bf The model category of homotopy algebras \ }

 Let $\C$ be a multi-sorted semi-theory. Our next goal is to describe a model 
structure that reflects the homotopy theory of homotopy $\C$-algebras. We cannot
do this arguing along the same lines as in the case of strict algebras  since the full subcategory 
of $\spaces^{\C}$ that consists  of homotopy algebras is not closed under colimits. 
Instead, we will obtain the desired model category by localizing the functor category  
$\spaces^{\C}$.  Also, since we will eventually need  a model category structure of 
homotopy algebras over an arbitrary finite product sketch (and not just for homotopy 
algebras over a multi-sorted semi-theory) we will work here in this more general setting.  

For a small category $\C$ the functor category $\spaces^{\C}$ can be  equipped with two 
different model category structures
 which we will denote by $\spaces^{\C}_{fib}$ and $\spaces^{\C}_{cof}$.  
In both of these model categories  
weak equivalences are objectwise weak equivalences.  In $\spaces^{\C}_{fib}$ 
fibrations are   the objectwise fibrations, and in $\spaces^{\C}_{cof}$, cofibrations are
the objectwise cofibrations.  In each case the third class of morphisms is determined 
by the lifting properties of model categories.  
By \cite[ VIII.1.4]{goerss} and \cite[IX.5.1]{goerss}  both $\spaces^{\C}_{fib}$ and $\spaces^{\C}_{cof}$ are simplicial 
model categories with the following simplicial structure:
for $X \in \spaces^{\C}$ and  $K \in \spaces$ the functor $X\otimes K \in \spaces^{\C}$ is given by 
$$(X \otimes K)(c) = X(c) \times K$$
for all $c\in \C$.  For $X, Y\in\spaces^{\C}$ by $\map_{\C}(X, Y)$ we will denote the 
associated simplicial mapping complex.

Let $(\C, \kappa) $ be a finite product sketch. We 
will consider an additional model category structure on $\spaces^{\C}$ denoted $\bo L \spaces^{\C}$. 
This model category is obtained as follows. For $c\in \C$ let $\C_{c}\in \spaces^{\C}$ denote the 
functor corepresented by $c$:  
$$\C_{c}(d) := \Hom_{\C}(c, d).$$
Given an $n$-fold cone $\alpha\in \kappa$ consider the morphism 
\begin{equation}
\label{LOC MAPS}
p^{\alpha\ast} := \coprod_{k=1}^{n} (p^{\alpha}_{k})^{\ast} \colon 
\coprod_{k=1}^{n} \C_{\alpha_k}\lra \C_{\alpha_{0}}.
\end{equation}

The model category $\bo L \spaces^{\C}$ is the left Bousfield localization of $\spaces^{\C}_{fib}$ with respect to 
the set $P = \{p^{\alpha \ast}\}_{\alpha\in \kappa}$. This localization exists by 
\cite[4.1.1]{hirschhorn} and by the fact that $\spaces^{\C}_{fib}$ is a left proper cellular model category.
The model category structure on $\bo L \spaces^{\C}$, can be described 
explicitly as follows:

\begin{itemize}

\item [i)] If  $X$ and $Y$ are cofibrant objects in $\spaces^{\C}_{fib}$ then 
a map $f\colon X\ra Y$ is a weak  equivalence  in $\bo L \spaces^{\C}$ 
if for every homotopy algebra $Z,$ fibrant in $\spaces^{\C}_{fib}$
the induced map of homotopy function complexes, 
$$f^\ast\colon\map_\C(Y,Z)\lra\map_\C(X,Z)$$
is a weak equivalence of simplicial sets. 
If $X$ and $Y$ are not cofibrant then the map $f$
is a weak  equivalence  in $\bo L \spaces^{\C}$  
if the induced map  $f'\colon X'\ra Y'$ between cofibrant replacements 
of $X$ and $Y$ is one.  We will call such map $f$  a local equivalence to distinguish it from 
and objectwise weak equivalence.

\item[ii)] Cofibrations in $\bo L \spaces^{\C}$ are the same as cofibrations in $\spaces^{\C}_{fib}$, 
and fibrations in $\bo L \spaces^{\C}$ are morphisms with the right lifting property with 
respect to local equivalences which are also cofibrations.

\item [iii)] An object $X \in \bo L \spaces^{\C}$ is fibrant iff $X$ is a homotopy $(\C, \kappa)$-algebra 
and $X$ is fibrant in $\spaces^{\C}_{fib}$.

\item[iv)] If $f\colon X\to Y$ is a map of homotopy $(\C, \kappa)$-algebras then $f$ is a local 
equivalence iff $f$ is an objectwise weak equivalence. 

\end{itemize}

\begin{note}
Later on we will frequently use the following, equivalent  description of local equivalences 
that can be obtained using arguments paralleling these given in \cite[Section 5]{badziochI}: 
a map $f\colon X\to Y$ is a local equivalence if for any homotopy $(\C, \kappa)$-algebra $Z$ 
which is a fibrant object in  $\spaces^{\C}_{cof}$ the induced map of simplicial function complexes
$$f^{\ast}\colon \map_{\C}(Y, Z)\to \map_{\C}(X, Z)$$
is a weak equivalence.
\end{note}

Combining properties iii) and and iv)  we obtain the following Proposition.

\begin{proposition}
Let $(\C, \kappa)$ be a finite product sketch. 
 The homotopy category of $\lsc$ is equivalent to the category obtained be taking the full subcategory of $\spaces^{\C}$
spanned by homotopy $(\C, \kappa)$-algebras and inverting all objectwise weak equivalences.
\end{proposition}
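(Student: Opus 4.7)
The plan is to identify the homotopy category $\mathrm{Ho}(\lsc)$ with a localization of the full subcategory of homotopy $(\C,\kappa)$-algebras at objectwise weak equivalences, by combining properties (iii) and (iv) of the Bousfield localization stated above.

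First I would recall the standard model-categorical fact that for any model category $\M$, its homotopy category $\mathrm{Ho}(\M)$ is equivalent to the full subcategory of fibrant objects localized at weak equivalences, since any object admits a fibrant replacement via a weak equivalence. Applied to $\lsc$, this says $\mathrm{Ho}(\lsc)$ is equivalent to the full subcategory $\M_{\mathrm{fib}} \subseteq \lsc$ of fibrant objects, with local equivalences inverted. By property (iii) this subcategory is precisely the collection of those homotopy $(\C,\kappa)$-algebras which are also fibrant in $\spaces^{\C}_{fib}$. By property (iv), every local equivalence between such objects is already an objectwise weak equivalence (and conversely, every objectwise weak equivalence is trivially a local equivalence). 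Hence $\mathrm{Ho}(\lsc)$ is equivalent to the category of fibrant homotopy algebras with objectwise weak equivalences inverted.

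Next I would show that the inclusion $\iota$ of fibrant homotopy algebras into all homotopy algebras becomes an equivalence after inverting objectwise weak equivalences on both sides. The key input is that if $X$ is a homotopy $(\C,\kappa)$-algebra and $X \to \tilde X$ is a fibrant replacement in $\spaces^{\C}_{fib}$, then $\tilde X$ is again a homotopy algebra: the replacement map is an objectwise weak equivalence, so for every cone $\alpha\in\kappa$ we have a commutative square whose vertical maps and bottom map are weak equivalences, and so by $2$-out-of-$3$ the top map $\tilde X(\alpha_0)\to\prod_k \tilde X(\alpha_k)$ is a weak equivalence too. This produces a functor going the other way, and the usual formal argument (the fibrant replacement is natural up to objectwise weak equivalence, hence invertible after localizing) shows that $\iota$ descends to an equivalence of localized categories.

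The main obstacle, and really the only thing that requires care, is verifying that the passage from fibrant homotopy algebras back to all homotopy algebras is an equivalence after localization, since fibrant replacement in $\spaces^{\C}_{fib}$ is a priori a construction on all of $\spaces^{\C}$ rather than one internal to the full subcategory of homotopy algebras. One must check that it preserves the homotopy-algebra property (handled above via $2$-out-of-$3$) and that the resulting zig-zag becomes identity after localization; both are routine but need to be stated explicitly. Once these are in place, the chain of equivalences assembles to give the claim.
\qed
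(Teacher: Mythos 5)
Your proof is correct and takes essentially the same approach the paper intends; the paper itself gives no details, merely asserting the proposition follows by "combining properties iii) and iv)," and your argument fills in exactly those details (the standard reduction to fibrant objects, the identification of fibrant objects via (iii), the identification of local equivalences with objectwise weak equivalences via (iv), and the two-out-of-three argument showing fibrant replacement stays within homotopy algebras).
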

In other words, $\bo L \spaces^{\C}$ is a model category that describes the homotopy theory of 
homotopy $(\C, \kappa)$-algebras.

\vspace{10mm}
{\noindent \bf  The Quillen Pair between $\algc$ and $\lsc$ \ }

Assume now that $\C$ is a multi-sorted semi-theory and consider the inclusion functor 
$$J_{\C}\colon \algc \lra \spaces^{\C}.$$
Our next goal will be to show that $J_{\C}$ is the right adjoint  in a Quillen pair of functors between 
the model categories $\algc$ and $\bo L \spaces^{\C}$. If for some 
semi-theory  $\C$ we can show that  this Quillen pair is a Quillen equivalence we 
will obtain that the rigidification problem can be solved for homotopy algebras over $\C$:
any homotopy  $\C$-algebra is weakly equivalent to a strict $\C$-algebra.

The existence of a left adjoint to the functor $J_{\C}$ can be demonstrated using the approach used by 
Bergner in \cite{Bergner}.

\begin{definition}
 \cite[5.5]{Bergner}
Let $\D$ be a small category and let $P$ be a set of morphisms in $\spaces^{\D}$.  
An object $Y$ in $\spaces^{\D}$ is {\em strictly local with respect to $P$}
if for every $(f\colon A \ra B) \in P$, the induced map of the  simplicial function complexes 
$$f^{\ast} \colon \map(B,Y) \lra \map(A,Y)$$
is an isomorphism of simplicial sets.
\end{definition}

\begin{lemma}
 \cite[5.6]{Bergner}
\label{forget}
For a small category $\D$ and a set of morphisms $P$ in $\spaces^{\D}$ let
$\bo{Alg}^{(\D, P)}$ denote the full subcategory  of $\spaces^{D}$ whose objects are
strictly local diagrams.  The inclusion functor 
$$\bo{Alg}^{(\D, P)}\to \spaces^{\D}$$
has a left adjoint.
\end{lemma}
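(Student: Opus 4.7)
The plan is to construct the left adjoint as an orthogonal reflection, reducing the problem to the classical theory of reflective subcategories of locally presentable categories.

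First I would reformulate strict locality as ordinary (set-enriched) orthogonality. Under the simplicial tensoring $(X\otimes K)(c) = X(c)\times K$, an $n$-simplex of $\map(B,Y)$ is the same data as a morphism $B\otimes\Delta[n]\to Y$ in $\spaces^{\D}$. Hence the condition that $f^{\ast}\colon \map(B,Y)\to \map(A,Y)$ be an isomorphism of simplicial sets is, level by level, equivalent to the condition that for every $n\geq 0$ the map of sets
\[
(f\otimes\id_{\Delta[n]})^{\ast}\colon \Hom_{\spaces^{\D}}(B\otimes\Delta[n], Y)\lra \Hom_{\spaces^{\D}}(A\otimes\Delta[n], Y)
\]
is a bijection. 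Thus $\bo{Alg}^{(\D,P)}$ is the full subcategory of objects orthogonal to the set
\[
\widetilde{P} := \{\, f\otimes \id_{\Delta[n]} \mid f\in P,\ n\geq 0\,\}.
\]

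Next I would apply the orthogonal-reflection theorem for locally presentable categories. The category $\spaces^{\D}$ is a presheaf category on a small category, so it is locally finitely presentable, and every domain and codomain of a map in $\widetilde{P}$ is finitely presentable. The subcategory $\bo{Alg}^{(\D,P)}$ is moreover closed under all small limits in $\spaces^{\D}$, since limits commute with $\Hom$ in the second variable and preserve bijections. The classical orthogonal-subcategory theorem (as in Ad\'amek--Rosick\'y, \emph{Locally Presentable and Accessible Categories}) then supplies a left adjoint $L\colon \spaces^{\D}\to \bo{Alg}^{(\D,P)}$ to the inclusion. Explicitly, $L(X)$ is built by transfinite iteration of two operations: a pushout step that adjoins a lift for every commutative square along a map of $\widetilde{P}$ (forcing surjectivity of the $\Hom$ maps), and a coequalizer step that identifies any pair of lifts extending the same partial map (forcing injectivity). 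Iterating through a regular cardinal larger than the presentability ranks of the objects appearing in $\widetilde{P}$ yields the reflection, and smallness of these generators produces the universal property.

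The main subtlety is the interplay between the two steps: a pushout may create fresh pairs of distinct extensions that must later be coequalized, and a coequalizer may expose new partial lifts needing extension. Unlike the standard small object argument for weak factorization systems, strict orthogonality demands both existence and uniqueness of lifts, so both steps are indispensable and must be interleaved. Termination of the interleaved transfinite process after set-many iterations, which is the crux of the argument, is exactly what local presentability of $\spaces^{\D}$ together with finite presentability of the objects in $\widetilde{P}$ guarantees.
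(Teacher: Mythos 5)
The paper does not prove this lemma; it cites it directly from Bergner \cite[5.6]{Bergner}, so there is no internal proof to compare against. Your argument is, nonetheless, correct in substance and is in the same spirit as the standard justification (and, I believe, Bergner's): reformulate strict locality as orthogonality to a set of morphisms and invoke the orthogonal-subcategory reflection theorem for locally presentable categories, proved by an interleaved transfinite pushout/coequalizer construction. The reformulation via $\widetilde{P} = \{f\otimes\id_{\Delta[n]}\}$ is right, since an $n$-simplex of $\map(B,Y)$ is exactly a map $B\otimes\Delta[n]\to Y$ under the tensoring in use.

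One small inaccuracy: you assert that every domain and codomain of a map in $\widetilde{P}$ is finitely presentable. This is not true for an arbitrary set $P$ of morphisms (the objects $A,B$ appearing in $P$ are unconstrained). It happens to hold in the specific $P$ used in this paper, since finite coproducts of corepresentables $\C_c$, which are the representables of the presheaf category $\spaces^{\D}\cong\Set^{\D\times\bo\Delta^{op}}$ at $(c,[0])$, are finitely presentable; but the lemma is stated for arbitrary $P$. Fortunately this does not hurt the argument: the orthogonal-subcategory theorem (Ad\'amek--Rosick\'y 1.38) only requires that $\spaces^{\D}$ be locally presentable and that $\widetilde{P}$ be a \emph{set}, since every object of a locally presentable category is $\lambda$-presentable for some regular $\lambda$, and one iterates the reflection construction up to a cardinal exceeding the presentability ranks involved. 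Indeed your own closing sentence already appeals to exactly this general form, so the misstated finiteness claim should simply be deleted.
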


Using this Lemma we obtain the following Proposition.

\begin{proposition}
Let $\C$ be a multi-sorted semi-theory. There exists a functor 
$$K_{\C}\colon\spaces^{\C}\lra\alg{\C}$$
left adjoint to $J_{\C}$.  Furthermore, the pair $(K_{\C},J_{\C})$ is a Quillen pair
between the model categories $\spaces^{\C}_{fib}$ and $\algc$.
\end{proposition}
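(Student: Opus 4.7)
The plan is to obtain the left adjoint by identifying strict $\C$-algebras with a category of strictly local diagrams, so that Lemma \ref{forget} applies directly. Let $P = \{p^{\alpha\ast}\}_{\alpha\in\kappa}$ be the set of morphisms in $\spaces^{\C}$ defined in \eqref{LOC MAPS}. For any $X\in \spaces^{\C}$ the Yoneda lemma gives natural isomorphisms $\map_{\C}(\C_{c},X)\cong X(c)$ for every $c\in \C$, and since the simplicial mapping complex converts coproducts in the first argument into products, we have $\map_{\C}(\coprod_{k=1}^{n}\C_{\alpha_{k}},X)\cong \prod_{k=1}^{n}X(\alpha_{k})$. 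Under these identifications, the map
$$(p^{\alpha\ast})^{\ast}\colon \map_{\C}(\C_{\alpha_{0}},X)\lra \map_{\C}\Bigl(\coprod_{k=1}^{n}\C_{\alpha_{k}},X\Bigr)$$
is precisely $\prod_{k=1}^{n}X(p^{\alpha}_{k})\colon X(\alpha_{0})\to \prod_{k=1}^{n}X(\alpha_{k})$.

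Consequently, $X$ is strictly local with respect to $P$ if and only if this map is an isomorphism of simplicial sets for every $\alpha\in\kappa$, which is exactly the definition of a strict $\C$-algebra. Hence the full subcategory $\bo{Alg}^{(\C,P)}\subseteq \spaces^{\C}$ coincides with $\algc$, and the inclusion $J_{\C}$ agrees with the inclusion of strictly local diagrams. Lemma \ref{forget} then furnishes the desired left adjoint $K_{\C}\colon \spaces^{\C}\to \algc$.

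For the Quillen pair assertion, I would appeal to the explicit descriptions of the two model structures: weak equivalences in both $\spaces^{\C}_{fib}$ and $\algc$ are the objectwise weak equivalences, and fibrations in both are the objectwise fibrations. Since $J_{\C}$ is an inclusion that does not alter the underlying functor $\C\to \spaces$, it trivially takes a fibration (resp.\ acyclic fibration) of strict algebras to an objectwise fibration (resp.\ objectwise acyclic fibration) in $\spaces^{\C}_{fib}$. Thus $J_{\C}$ is a right Quillen functor, establishing that $(K_{\C},J_{\C})$ is a Quillen pair.

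No step here presents a serious obstacle, as the work is essentially bookkeeping once the right set $P$ is chosen. The only place where one has to be careful is verifying the Yoneda-type computation that the local objects for $P$ are exactly the strict algebras; once that is in hand, both the adjunction and the Quillen property are immediate from results cited earlier in the section.
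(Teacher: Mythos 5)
Your proof is correct and takes essentially the same approach as the paper's: identify $\algc$ with the strictly local diagrams for $P=\{p^{\alpha\ast}\}_{\alpha\in\kappa}$, invoke Lemma \ref{forget} for the left adjoint, and observe that $J_{\C}$ preserves fibrations and weak equivalences since both are objectwise. You have merely spelled out the Yoneda computation that the paper leaves implicit.
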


\begin{proof}
It is enough to notice that a strict $\C$-algebra is a diagram $X\colon \C \ra \spaces$ which is 
strictly local with respect to the set  $P=\{p^{\alpha \ast}\}_{\alpha\in \kappa}$ where 
$p^{\alpha \ast}$ is the map defined as in (\ref{LOC MAPS}).  To see that we have a Quillen pair,
notice that fibrations and weak equivalences in $\algc$ are computed objectwise, thus $J_{\C}$
preserves both.
\qed
\end{proof}

Next, we want show that  $(K_{\C},J_{\C})$ is still a  Quillen pair after we localize $\spaces^{\C}_{fib}$.  
This is a consequence of the following fact.

\begin{lemma}
\label{qp}
Let $\M$ and $\bo N$ be model categories and ${\bo L} \M$ be a left Bousfield localization
of $\M$. If 
$$F\colon \M \leftrightarrows {\bo N} \colon G$$ 
is a Quillen pair such  that for
any fibrant object $X \in \bo N$ the object $G(X)$ is fibrant in ${\bo L}\M$, then 
$$F\colon {\bo L}\M\leftrightarrows {\bo N} \colon G$$ is also a Quillen pair. 
\end{lemma}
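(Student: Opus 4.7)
The plan is to verify directly that $F$ preserves cofibrations and acyclic cofibrations once $\M$ is replaced with the localized model structure $\bo L \M$. For cofibrations this is automatic: by construction of a left Bousfield localization, the cofibrations of $\bo L \M$ coincide with those of $\M$, so the condition is inherited from the original Quillen pair. As a corollary the acyclic fibrations also coincide in $\M$ and $\bo L \M$, since in both model structures they are characterized by the right lifting property with respect to the common class of cofibrations; in particular $G$ still carries acyclic fibrations of $\bo N$ to acyclic fibrations of $\bo L \M$, for free. What remains, and the heart of the argument, is to check that $F$ sends acyclic cofibrations of $\bo L \M$ to weak equivalences of $\bo N$.

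To this end, let $i\colon A \to B$ be an acyclic cofibration in $\bo L \M$. Since cofibrant replacement in $\bo L \M$ is the same as in $\M$, I may assume $A$ and $B$ are cofibrant; then $F(A)$ and $F(B)$ are cofibrant in $\bo N$ because $F$ preserves cofibrations and the initial object. To show $F(i)$ is a weak equivalence in $\bo N$ between cofibrant objects, it suffices by the standard mapping-complex characterization to show that for every fibrant object $Z$ in $\bo N$ the induced map
$$\map_{\bo N}(F(B), Z) \lra \map_{\bo N}(F(A), Z)$$
is a weak equivalence of simplicial sets. By the (derived) adjunction this map is identified with
$$\map_{\M}(B, G(Z)) \lra \map_{\M}(A, G(Z)).$$
By the hypothesis of the lemma, $G(Z)$ is fibrant in $\bo L \M$; since $A$ and $B$ are cofibrant and $i$ is a weak equivalence in $\bo L \M$, the local characterization of weak equivalences in a left Bousfield localization (recalled in item i) of the description of $\bo L \spaces^{\C}$ just before the lemma, which holds for any left Bousfield localization) implies that the latter map is a weak equivalence, as required.

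The main obstacle is justifying the adjunction identity at the level of homotopy function complexes rather than only on $\Hom$-sets. In the settings of interest here both $\M$ and $\bo N$ will be simplicial model categories and the Quillen pair will respect the simplicial enrichment, so this identification is immediate; more generally one runs the same argument with Hirschhorn's framing-based homotopy function complexes. The upshot is essentially Hirschhorn's criterion (\cite[Theorem 3.3.20]{hirschhorn}) for a Quillen adjunction to descend through a left Bousfield localization, and the argument above is the standard route to it.
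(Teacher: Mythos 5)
You take a genuinely different route from the paper. The paper applies Dugger's criterion (his Corollary A.2): to verify the Quillen pair it suffices to check that the right adjoint $G$ preserves acyclic fibrations and fibrations between fibrant objects. Both are then checked directly for ${\bo L}\M$ using Hirschhorn's description of fibrations, local objects, and acyclic fibrations in a left Bousfield localization (\cite[3.3.14, 3.2.13, 3.3.15, 3.3.3]{hirschhorn}); no function complexes are needed. You instead argue on the side of the left adjoint $F$, which is a legitimate alternative strategy, but it introduces a gap.

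The gap is the sentence ``Since cofibrant replacement in $\bo L \M$ is the same as in $\M$, I may assume $A$ and $B$ are cofibrant.'' This reduction does not follow from the stated justification. Knowing that $F$ sends acyclic cofibrations \emph{between cofibrant objects} to weak equivalences does not in general imply that $F$ sends \emph{all} acyclic cofibrations to weak equivalences: replacing $A$ and $B$ by cofibrant approximations $\tilde A \to A$, $\tilde B \to B$ introduces trivial fibrations, and nothing in the hypotheses makes $F$ preserve trivial fibrations. Indeed, the correct dual of Dugger's criterion is that $F$ preserve \emph{all} trivial cofibrations together with cofibrations between cofibrant objects --- you verify the opposite and weaker combination. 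The step can be rescued in the setting at hand by observing that ${\bo L}\M$ is cofibrantly generated with generating acyclic cofibrations having cofibrant domains (for $\M = \spaces^{\C}_{fib}$ the original generators already have cofibrant domains, and Hirschhorn's additional generators for the localization are built from cofibrant approximations to the localizing maps), so that lifting reduces to these generators; but this is a substantive argument that needs to be made, not an appeal to cofibrant replacement. As written, this is a genuine hole, and the paper's proof via the right adjoint avoids the issue entirely.
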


\begin{proof}
By \cite[A.2]{Dugger} we only need to show that $G\colon {\bo N} \to {\bo L}\M$ preserves all fibrations between fibrant objects and preserves all acyclic fibrations. Assume then that $f\colon X \lra Y$ is a fibration between 
fibrant objects in $\bo N$. 
By assumption  $G(X)$ and $G(Y)$ are fibrant in 
${\bo L}\M$. Also, since $G\colon {\bo N}\to \M$  is a right Quillen functor the morphism  $G(f)$ is a fibration
in $\M$.  
Using the model category structure of ${\bo L}\M$ we can decompose $G(f)$ 
so that $$G(f)=G(X)\mapright{20}{\varphi} Z \mapright{20}{\psi} G(Y)$$
 where $\psi$ is a fibration in ${\bo L}\M$ and
$\varphi$ is an acyclic cofibration in ${\bo L}\M$. By \cite[3.3.14]{hirschhorn} we get  that $Z$ must be a local object, which by  \cite[3.2.13]{hirschhorn}  gives that $\varphi$ is  a weak equivalence in $\M$. 
Therefore by \cite[3.3.15]{hirschhorn}, $G(f)$ must  be a fibration in ${\bo L}\M$.

If $f\colon X \lra Y$ is an acyclic fibration in $\bo N$ then $G(f)$ is an acyclic
fibration in $\M$ since $G$ is a right Quillen functor.  It remains  to notice  that acyclic fibrations 
in ${\bo L} \M$ are the same as acyclic fibrations in $\M$ \cite[3.3.3]{hirschhorn}.
\qed  
\end{proof}

\begin{proposition}
\label{KC JC}
For any multi-sorted semi-theory
 the adjoint pair of functors $(K_{\C},J_{\C})$ is a Quillen 
  pair between the model categories $\lsc$ and $\alg{\C}$.
\end{proposition}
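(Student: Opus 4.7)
The plan is to invoke Lemma \ref{qp} applied to the Quillen pair $(K_\C, J_\C)$ between $\spaces^{\C}_{fib}$ and $\algc$ that was established in the preceding proposition. Taking $\M = \spaces^{\C}_{fib}$, $\bo N = \algc$, and $\bo L \M = \lsc$, the only nontrivial point left to verify is the hypothesis of Lemma \ref{qp}: that $J_\C$ sends fibrant objects of $\algc$ to fibrant objects of $\lsc$.

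First I would identify the fibrant objects on both sides. By the model structure on $\algc$ described in the corollary above, a fibrant strict algebra is simply a strict $(\C,\kappa)$-algebra $X\colon \C\to\spaces$ that is objectwise fibrant. By property iii) in the description of $\lsc$, a fibrant object of $\lsc$ is a homotopy $(\C,\kappa)$-algebra that is fibrant in $\spaces^{\C}_{fib}$, i.e.\ objectwise fibrant.

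Thus, given a fibrant $X \in \algc$, its underlying diagram $J_{\C}(X)$ is automatically objectwise fibrant, so it remains to check that $J_{\C}(X)$ is a homotopy algebra. This is immediate from the definitions: for each cone $\alpha \in \kappa$ the map
$$\prod_{k=1}^{|\alpha|}X(p^{\alpha}_{k})\colon X(\alpha_{0}) \lra \prod_{k=1}^{|\alpha|}X(\alpha_{k})$$
is an isomorphism in $\spaces$ because $X$ is a strict algebra, and isomorphisms are in particular weak equivalences. Hence $J_{\C}(X)$ satisfies the weak equivalence condition on each $\alpha \in \kappa$ and is a homotopy algebra.

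With the hypothesis of Lemma \ref{qp} verified, the conclusion follows directly. No obstacle is expected here since strict algebras are by definition ``more rigid'' than homotopy algebras, so the inclusion $J_\C$ automatically lands inside the fibrant objects of the localized category; the localization only affects the notion of weak equivalence on the side of diagrams, while $K_{\C}$ already inverts the localizing maps in $P$ by construction.
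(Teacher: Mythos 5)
Your proof is correct and follows essentially the same route as the paper: apply Lemma \ref{qp} to the previously established Quillen pair, and verify that $J_\C$ sends fibrant strict algebras to fibrant objects of $\lsc$ by noting that a strict algebra is automatically a homotopy algebra (isomorphisms are weak equivalences) and remains objectwise fibrant. You simply spell out the verification that the paper labels ``obvious.''
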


\begin{proof}
By Lemma \ref{qp} we  only need to show that for any fibrant object $X \in \algc$ the object 
$J_{\C}(X)$ is fibrant  in $\lsc$. This is obvious since fibrant objects in $\lsc$ are  
homotopy algebras, fibrant in $\spaces^{\C}_{fib}$.
\qed
\end{proof}

The next Lemma gives
a way of verifying that $(K_{\C},J_{\C})$ is a Quillen equivalence for a given multi-sorted semi-theory $\C$.  For
$X\in \spaces^{\C}$ let $$\eta_{X} \colon X \ra J_{\C}K_{\C}X$$
be the  unit of adjunction of $(K_{\C},J_{\C})$.

\begin{lemma}
\label{compare}
Let $\C$ be a multi-sorted semi-theory. 
 If the map $\eta_{\C_c}$ is a local equivalence for all $c \in \C$ then the Quillen pair $\KJ$  is a Quillen equivalence
of the model categories $\lsc$ and $\alg{\C}$.
\end{lemma}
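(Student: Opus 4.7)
The plan is to reduce the Quillen equivalence statement to a single unit condition and then verify that condition by a cellular induction based on the hypothesis on representables.

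Since $J_{\C}$ is the inclusion of a full subcategory, the counit $K_{\C}J_{\C}Y\to Y$ is an isomorphism for every $Y\in\algc$, so the derived counit is automatically a weak equivalence. Given a cofibrant $X\in\lsc$ and a fibrant $Y\in\algc$, the object $J_{\C}Y$ is a homotopy algebra, fibrant in $\scf$, hence fibrant in $\lsc$, and $J_{\C}K_{\C}X$ is a homotopy algebra because every strict algebra is one. Any map $\widetilde{f}\colon X\to J_{\C}Y$ adjoint to $f\colon K_{\C}X\to Y$ factors as $X\to J_{\C}K_{\C}X\to J_{\C}Y$, and by property iv) of the model structure on $\lsc$ the second arrow is a local equivalence iff it is an objectwise weak equivalence, i.e.\ iff $f$ is a weak equivalence in $\algc$. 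Hence the Quillen equivalence criterion reduces to showing that $\eta_{X}\colon X\to J_{\C}K_{\C}X$ is a local equivalence whenever $X$ is cofibrant in $\lsc$.

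Every cofibrant $X$ in $\lsc=\bo L\spaces^{\C}$ is a retract of a transfinite cellular composition built from the generating cofibrations of $\scf$, namely the maps $\C_{c}\otimes\partial\Delta[n]\to\C_{c}\otimes\Delta[n]$ for $c\in\C$ and $n\geq 0$. By hypothesis $\eta_{\C_{c}}$ is a local equivalence for every $c\in\C$. One first extends this to $\eta_{\C_{c}\otimes\Delta[n]}$ by observing that the projection $\C_{c}\otimes\Delta[n]\to\C_{c}$ is an objectwise weak equivalence, then to $\eta_{\C_{c}\otimes\partial\Delta[n]}$ via the skeletal filtration of $\partial\Delta[n]$ and the gluing lemma, and finally to all cofibrant $X$ by induction along the cellular filtration, using left properness of $\lsc$ (inherited from the left proper $\scf$ via Bousfield localization) and closure of local equivalences under transfinite compositions of cofibrations.

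The main obstacle is the inductive step for pushouts: $J_{\C}$ is a right adjoint and so does not preserve pushouts in general, so the image under $J_{\C}K_{\C}$ of a pushout square in $\spaces^{\C}$ is not a priori a pushout. Handling this requires a sufficiently explicit description of $K_{\C}$ on cellular diagrams, obtained from the semi-theory structure of $\C$ along the lines of Badzioch's analysis in \cite{badziochIII}, to produce an objectwise weak equivalence between the pushout of $J_{\C}K_{\C}$ applied termwise and $J_{\C}K_{\C}$ of the true pushout. Once this comparison is in hand the gluing lemma closes the induction, and combined with the reduction above this yields the Quillen equivalence.
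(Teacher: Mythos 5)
Your initial reduction is sound: since $J_{\C}$ is a full embedding with identity counit, the Quillen equivalence criterion reduces to showing that the unit $\eta_X$ is a local equivalence for cofibrant $X$, using property iv) of the localized structure to compare maps between homotopy algebras. But the cellular induction you propose runs into the exact obstacle you name — and you do not resolve it. The issue is not merely that $J_{\C}$ fails to preserve pushouts; it is that $J_{\C}K_{\C}$ does not commute with colimits in $\spaces^{\C}$ in general (already at the level of coproducts, since coproducts in $\algc$ differ from those in $\spaces^{\C}$), so the gluing lemma cannot be applied at the inductive step without a substantial additional argument. Deferring to ``a sufficiently explicit description of $K_{\C}$ on cellular diagrams'' is not a proof step; it is a restatement of the difficulty, and it is precisely the hard part.

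The paper takes a genuinely different route that sidesteps this. Instead of the cellular filtration from generating cofibrations, it uses the cotriple (bar-construction) cofibrant replacement $\hat X = |FU_{\bullet}X|$ from Badzioch's work, whose simplicial levels are (possibly infinite) coproducts of corepresented functors. The argument (Lemma~\ref{badi}) then only needs three structural facts: that $JK$ commutes with geometric realization of simplicial objects (checked via \cite[6.2]{badziochI}), that $J$ commutes with filtered colimits (used to pass from finite to infinite coproducts), and that $\eta_Y$ is a local equivalence for $Y$ a \emph{finite} coproduct of representables. This last fact (Lemma~\ref{corep}) is itself nontrivial and is proved using the multi-sorted semi-theory structure: the projection maps in the cones of $\C$ identify $\coprod_i \C_{\underline{s}_i}$ with a single representable $\C_{\underline s}$ up to local equivalence, and then $K_{\C}$ of these comparison maps is shown to be an isomorphism by mapping into an arbitrary strict algebra and invoking Yoneda together with the product-preserving property of strict algebras. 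None of this interacts with pushouts along attaching maps, which is why the approach succeeds where cellular induction stalls. If you want to salvage your approach you would need to establish, for semi-theories, that the left Quillen functor $K_{\C}$ preserves enough cofibrancy to make the relevant pushouts homotopy pushouts in $\algc$, and then compare the underlying $\spaces^{\C}$-diagram of the $\algc$-pushout with the $\spaces^{\C}$-pushout — a comparison which is not automatic and which the paper's chosen replacement makes unnecessary.
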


The proof of Lemma \ref{compare} will use a couple of auxiliary facts. 

First,  for a simplicial model category  $\M$ let $s\M$ be the category of simplicial objects in 
$\M$, i.e. the category of functors  ${\bo\Delta}^{op}\to \M$. 
We have the geometric realization functor
$$|- | \colon s\M \lra \M$$
such that for $X_{\bullet}\in s\M$ the object  $|X_{\bullet}|\in \M$ is the coequalizer of the diagram:
$$\coprod_{\phi \colon [n] \ra [m]} X_m \otimes \Delta [n] \rightrightarrows \coprod_{[n]} 
X_n \otimes \Delta [n] .$$

\begin{lemma}
\label{badi}
 Let $\C$ be a small category and $\lsc$ be the left Bousfield localization of $\spaces^{\C}_{fib}$ with respect
to a set of maps $P$.  Assume that we have a Quillen pair 
$$K \colon \lsc \leftrightarrows \M \colon J$$  where $\M$ is assumed to be a simplicial model category and the following hold

\begin{itemize}
 \item [i)]for $X_{\bullet} \in s \spaces^{\C}$ we have  $|JKX_{\bullet} | \simeq JK |X_{\bullet}|$
 \item [ii)]$J$ commutes with filtered colimits
 \item [iii)]$J(f)$ is a local equivalence iff $f$ is a weak equivalence
 \item [iv)] the unit $\eta_{Y}$ of the adjunction is a local equivalence for $Y=\coprod_{i=1}^{m} \C_{c_i}$
where $\{c_i\}_{i=1}^m$ is any finite set of objects in $\C$
\end{itemize}
 Then $(K,J)$ is a Quillen equivalence.

\end{lemma}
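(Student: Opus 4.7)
\textbf{Proof plan for Lemma \ref{badi}.}

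My plan is to check the standard criterion for a Quillen equivalence: $(K,J)$ is a Quillen equivalence iff (a) $J$ reflects weak equivalences between fibrant objects and (b) for every cofibrant $X\in\lsc$ the composite $X\xrightarrow{\eta_X}JKX\to J(RKX)$ is a local equivalence, where $R$ denotes a fibrant replacement in $\M$. Hypothesis (iii) gives (a) directly. For (b), applying (iii) to the weak equivalence $KX\to RKX$ in $\M$ shows that $JKX\to J(RKX)$ is a local equivalence, so the whole task reduces to proving that $\eta_X\colon X\to JKX$ is a local equivalence for every cofibrant $X\in\lsc$. This is where the remaining three hypotheses (i), (ii), (iv) come into play.

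I would establish this by a two-stage bootstrap starting from the class allowed in (iv). First, assume $Y=\coprod_{i\in I}\C_{c_i}$ is an arbitrary (possibly infinite) coproduct of corepresentables. Writing $I$ as the filtered colimit of its finite subsets $F$, we have $Y=\colim_F Y_F$ with $Y_F=\coprod_{i\in F}\C_{c_i}$. Because $K$ is a left adjoint and $J$ commutes with filtered colimits by (ii), the map $\eta_Y$ is the filtered colimit of the maps $\eta_{Y_F}$, each of which is a local equivalence by (iv). Filtered colimits preserve local equivalences in the cofibrantly generated left Bousfield localization $\lsc$ (since the small-object argument for a fibrant-replacement functor $L_{\lsc}$ can be taken to commute with filtered colimits along cofibrations, and local equivalences are exactly the maps inverted by $L_{\lsc}$), so $\eta_Y$ is a local equivalence.

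For the second stage, let $X\in\lsc$ be any cofibrant object. I would use the cotriple resolution coming from the free/forgetful adjunction $F\dashv U$ between $\spaces^\C$ and $\prod_{c\in\C}\spaces$ (so that each $F(A)=\coprod_c A(c)\cdot\C_c$ is a coproduct of corepresentables tensored with simplicial sets). This produces a Reedy cofibrant simplicial object $X_\bullet\in s\spaces^\C$ with each $X_n$ in the class handled by the first stage (after a further reduction to coproducts of corepresentables using the skeletal filtration of each tensor factor) and with $|X_\bullet|\to X$ a weak equivalence in $\spaces^\C_{fib}$. Applying $\eta$ levelwise and realizing gives a commuting square
\begin{equation*}
\xymatrix{
|X_\bullet|\ar[r]^-{\sim}\ar[d]_{|\eta_{X_\bullet}|} & X \ar[d]^{\eta_X} \\
|JKX_\bullet|\ar[r]^-{\sim} & JK|X_\bullet|\ar[r]^-{\sim} & JKX
}
\end{equation*}
where the lower left identification is hypothesis (i), the lower right equivalence uses that $K$ preserves weak equivalences between cofibrant objects (it is left Quillen) and that $J$ preserves local equivalences by (iii). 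Since realization of a Reedy cofibrant simplicial object preserves objectwise local equivalences, $|\eta_{X_\bullet}|$ is a local equivalence, and chasing the diagram shows $\eta_X$ is one as well. Combined with the reduction in the first paragraph, this completes the proof.

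The main obstacles I anticipate are technical rather than conceptual: verifying that filtered colimits really do preserve local equivalences in $\lsc$ (which rests on $P$ being a \emph{set} and the cellularity of $\spaces^\C_{fib}$), and arranging that the cotriple resolution is Reedy cofibrant with each level in the class covered by the first bootstrap step, so that geometric realization transfers the levelwise local equivalence $\eta_{X_\bullet}$ to a local equivalence of realizations. Both points are standard in the Badzioch--Bergner framework, and hypotheses (i), (ii), (iv) are evidently tailored so that exactly this argument goes through.
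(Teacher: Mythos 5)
Your proposal is correct and takes essentially the same route as the paper: reduce via hypothesis (iii) to showing the unit $\eta_X$ is a local equivalence on cofibrants, bootstrap from finite coproducts (iv) to infinite ones using filtered colimits and (ii), and then handle a general cofibrant $X$ by the cotriple/free--forgetful simplicial resolution (the paper invokes exactly Badzioch's $|FU_{\bullet}X|$ construction) together with hypothesis (i) to pass through geometric realization. The only cosmetic difference is that the paper performs the realization reduction first and the filtered-colimit step second, and replaces your ``filtered colimits preserve local equivalences'' claim with an explicit $\colim \simeq \hocolim$ argument over the poset of finite subsets.
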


\begin{proof}
We need to show that for any cofibrant object  $X\in \lsc$ and any fibrant object $Y\in \M$
a morphism $f\colon K(X) \to Y$ is a weak equivalence in $\M$ if and only if its transpose 
$f^{\sharp}\colon X \to J(Y)$ is a weak equivalence in $\lsc$. Recall that   $f^{\sharp}$ is 
given by the composition $f^{\sharp} = J(f)\eta_{X}$ where $\eta_{X}$ is the unit of 
adjunction. Since by assumption $J(f)$ is a weak equivalence if and only if $f$ is one, 
it will suffice to show that $\eta_{X}$ is a weak equivalence in $\lsc$ for all cofibrant 
objects $X\in \lsc$. 
 
Assume now that $X, \hat{X}\in \lsc$ are cofibrant objects, that $\eta_{\hat X}$ is a weak equivalence 
and that we also have a weak equivalence $g\colon \hat X \to X$. We claim that in such a case 
$\eta_{X}$ is also a weak equivalence. Indeed, we have a commutative diagram. 
\begin{equation*}
\begin{tikzpicture}[baseline=(current bounding box.center)]
\matrix (m) 
[matrix of math nodes, row sep=3em, column sep=3em, text height=1.5ex, text depth=0.25ex]
{
\hat X & JK(\hat X) \\
X & JK(X) \\ 
};
\path[->, thick, font=\scriptsize]
(m-1-1)
edge node[anchor=south] {$\eta_{\hat X}$} node[anchor=north] {$\simeq$} (m-1-2)
edge node[anchor=east] {$g$} node[anchor=west] {$\simeq$} (m-2-1)
(m-1-2)
edge node[anchor=west] {$JK(g)$} (m-2-2)
(m-2-1) 
edge node[below] {$\eta_{X}$} (m-2-2)
; 
\end{tikzpicture}
\end{equation*}
It suffices to show that $JK(g)$ is a weak equivalence in $\lsc$. 
Since $g$ is weak equivalence of cofibrant objects and $K$ is a left Quillen functor by 
\cite[Lemma 9.9]{dwyerspalinski} we obtain that the morphism $K(g)$ is a weak equivalence. Therefore 
by assumption $JK(g)$ is a weak equivalence. 

The above argument shows that in order to complete the proof it suffices to show that for any 
$X\in \lsc$ there exists  a cofibrant object $\hat X$ weakly equivalent to $X$ and such that 
$\eta_{\hat X}$ is a weak equivalence.   In order to get an  appropriate choice of $\hat X$
we can use a construction given by Badzioch. In  \cite[\S 3]{badziochI} he showed that to any 
$X \in \lsc$ we can associate a cofibrant replacement $\hat X$ ($\hat X = |FU_{\bullet} X|$ in the 
notation of \cite{badziochI}) such that: 
\begin{itemize}
\item[1)] $\hat X$ is obtained as the geometrical realization of a certain bisimplicial object 
$\hat X_{\bullet\bullet}$ in $\spaces^{\C}$;
\item[2)] in each bisimplicial degree the functor $\hat X_{m, n}\in \spaces^{\C}$ is given by 
a (possibly infinite) coproduct of corepresented functors:
$$\hat X_{m, n} = \coprod_{i\in I} \C_{c_i}.$$
\end{itemize}
Since by assumption the functor $JK$ commutes with geometric realization, in order to see that $\eta_{\hat X}$ is a weak equivalence for all $\hat X$ it will suffice 
to show that $\eta_{Z}$ is a weak equivalence for any $Z =  \coprod_{i\in I} \C_{c_i}$.
Notice that if $I$ is a finite set then this holds by assumption. Assume then that $I$ is infinite, and 
let ${\bo P}_I $ denote the category of all finite subsets of $I$ with morphisms given by 
inclusions of sets.  We have a functor:
$$\tilde{Z} \colon {\bo P}_I \lra \lsc, \ \ \ \ \ \ \tilde{Z}(A)  := \coprod_{i\in A}\C_{c_i}.$$ 
Notice that  $\colim[{\bo P}_{I}] \tilde{Z} =Z $. Also,  $\colim[{\bo P}_{I}] JK\tilde{Z} =JK(Z)$ 
since by assumption $J$ commutes
with filtered colimits and $K$, as a left adjoint, preserves all colimits.  
The map $\eta_Z$ is then a colimit of maps
$$\eta_{\tilde{Z}(A)}\colon \tilde{Z}(A) \lra JK(\tilde{Z}(A))$$
and since $\tilde{Z}(A)$ is a finite disjoint
union of corepresented functors for all $A\in {\bo P}_{I}$ thus  by assumption 
the maps $\eta_{\tilde{Z}(A)}$ are local equivalences.
This  gives that the map 
$$\hocolim[{\bo P}_{I}] \eta_{\tilde{Z}(A)}\colon \hocolim[{\bo P}_{I}] \tilde{Z}(A)\lra
 \hocolim[{\bo P}_{I}] JK(\tilde{Z}(A))$$
is a local  equivalence. It remains to notice that since ${\bo P}_{I}$ is a filtered category we have 
$\colim[{\bo P}_{I}]\tilde{Z} \simeq \hocolim[{\bo P}_{I}]\tilde{Z}$ and
$\colim[{\bo P}_{I}] JK(\tilde{Z}) \simeq \hocolim[{\bo P}_{I}] JK(\tilde{Z})$.
\qed  
\end{proof}

\begin{lemma}
\label{corep}
Let $\C$ be a multi-sorted semi-theory. 
 If $\eta_{\C_c}$ is a local equivalence for all $c \in \C$ then for any collection
$\{c_i\}_{i=1}^m$ of objects in $\C$,  $\eta_{Y}$
is a local equivalence for $Y =\coprod_{i=1}^{m} C_{c_i}.$
\end{lemma}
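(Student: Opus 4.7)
The plan is to reduce the statement to the hypothesized single-representable case by building a zig-zag of local equivalences between $Y$ and a suitable corepresented functor $\C_{c_{\underline s}}$, and then invoking naturality of $\eta$. The key structural input is semi-theory axiom (ii) of Definition \ref{SEMITH DEF}: writing each $c_i = c_{\underline s_i}$ with $\underline s_i = (s_{i,1},\dots,s_{i,n_i})$, the concatenated tuple $\underline s := (s_{1,1},\dots,s_{m,n_m})$ indexes a genuine object $c_{\underline s}\in \C$, whose cone $\alpha^{\underline s}\in\kappa$ has projections onto exactly the coordinate sorts appearing in the $\underline s_i$'s.

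First I would exhibit two local equivalences sharing a common domain. By construction of $\lsc$ as the left Bousfield localization at $\{p^{\alpha\ast}\}_{\alpha\in\kappa}$, the map
$$p^{\underline s\ast}\colon \coprod_{i,k}\C_{c_{s_{i,k}}}\lra \C_{c_{\underline s}}$$
is a local equivalence, as is each map $p^{\underline s_i\ast}\colon \coprod_{k}\C_{c_{s_{i,k}}}\to \C_{c_{\underline s_i}}$. Taking the coproduct of the $p^{\underline s_i\ast}$ over $i = 1,\dots,m$ yields a second map
$$\varphi\colon \coprod_{i,k}\C_{c_{s_{i,k}}}\lra Y$$
with the same source as $p^{\underline s\ast}$. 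To see that $\varphi$ is itself a local equivalence, I would invoke left properness of $\lsc$ (inherited from the left proper cellular category $\spaces^{\C}_{fib}$) together with the fact that representables and their coproducts are projectively cofibrant: in any left proper model category, finite coproducts of weak equivalences between cofibrant objects remain weak equivalences, since each stage $X\sqcup Z\to Y\sqcup Z$ is a pushout of $X\to Y$ along the cofibration $X\to X\sqcup Z$.

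Finally I would close the argument by naturality of $\eta$ combined with two-out-of-three. The essential auxiliary observation is that $J_\C K_\C$ carries local equivalences between cofibrant objects to local equivalences: since $K_\C$ is left Quillen by Proposition \ref{KC JC}, Ken Brown's lemma gives that $K_\C$ of such a local equivalence is a weak equivalence in $\algc$, which $J_\C$ sends to an objectwise, hence local, weak equivalence. Applying this to the naturality squares of $\eta$ along $\varphi$ and along $p^{\underline s\ast}$, two-out-of-three implies that $\eta_Y$, $\eta_{\coprod_{i,k}\C_{c_{s_{i,k}}}}$ and $\eta_{\C_{c_{\underline s}}}$ are simultaneously local equivalences; the hypothesis that $\eta_{\C_c}$ is a local equivalence for every $c\in \C$ then forces $\eta_Y$ to be one. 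I do not anticipate any real obstacle; the argument only uses the semi-theory axiom identifying $c_{\underline s}$ as an object of $\C$, left properness of $\lsc$ to handle finite coproducts, and the Quillen adjunction $\KJ$.
\qed
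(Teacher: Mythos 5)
Your proof is correct and uses essentially the same zig-zag decomposition as the paper: both introduce the concatenated tuple $\underline s$ and play the two localizing maps $p^{\underline s\ast}$ and $\coprod_i p^{\underline s_i\ast}$ (which share the common domain $\coprod_{i,k}\C_{c_{s_{i,k}}}$) against one another via naturality of $\eta$ and two-out-of-three. The one genuine variation is in how one sees that $J_\C K_\C$ of these two maps are local equivalences. The paper argues this by a direct calculation: it shows that $K_\C$ of each map is actually an \emph{isomorphism} of strict algebras, by computing the simplicial mapping complexes into an arbitrary strict algebra $A$ through the $(K_\C,J_\C)$-adjunction and the Yoneda lemma, and using that $A$ takes cones to products. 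Your substitute — that $J_\C K_\C$ sends any local equivalence between cofibrant objects to a local equivalence, since $K_\C$ is left Quillen (so preserves weak equivalences between cofibrants by Ken Brown's lemma) and $J_\C$ sends objectwise equivalences in $\algc$ to objectwise, hence local, equivalences — is cleaner and more general, and suffices for the two-out-of-three argument even though it yields a weaker conclusion (local equivalence rather than isomorphism). One minor note: left properness is not actually needed to see that $\varphi=\coprod_i p^{\underline s_i\ast}$ is a local equivalence; a finite coproduct of weak equivalences between cofibrant objects is always a weak equivalence in any model category, which is what the paper implicitly uses when it says the left vertical maps ``come from localizing maps.'' Your pushout argument is not wrong, just an unnecessarily heavy justification for that step.
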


\begin{proof}

Let $\C$ be an $S$-sorted semi-theory. Recall that objects $c_{\underline s}\in \C$ are indexed 
by $n$-tuples ${\underline s} = (s_{1}, \dots, s_{n})$ where $s_{k}\in S$ and $n\geq 0$. In order to 
simplify notation we will write $\C_{\underline s}$ to denote the functor corepresented by 
$c_{\underline s}$. For $i=1, \dots, m$ let  ${\underline s}_{i} = (s^{i}_{1}, \dots, s^{i}_{n_{i}})$ 
be an $n_{i}$-tuple and let $Y = \coprod_{i=1}^{m} \C_{{\underline s}_{i}}$. We need to show
that $\eta_{Y}$ is a local equivalence. Let ${\underline s}$ denote the $\sum_{i=1}^{m}n_{i}$-tuple 
obtained by concatenating ${\underline s}_{i}$'s:
$${\underline s} = (s^{1}_{1}, \dots, s^{1}_{n_{1}},  \dots, s^{m}_{1}, \dots, s^{m}_{n_{m}}).$$
Notice that the projection maps in the cones of $\C$ define functors  
$$p^{{\underline s}\ast} \colon \coprod_{i=1}^{m}\coprod_{k=1}^{n_{i}} \C_{s^{i}_{k}} \to C_{\underline s}$$ 
and for $i=1, \dots, m$
$$p^{{\underline s}_{i}\ast} \colon \coprod_{k=1}^{n_{i}} \C_{s^{i}_{k}} \to C_{{\underline s}_{i}}$$ 
is given by equation (\ref{LOC MAPS}). Consider the commutative diagram
\begin{equation*}
\begin{tikzpicture}[baseline=(current bounding box.center)]
\matrix (m) 
[matrix of math nodes, row sep=5em, column sep=3em, text height=1.5ex, text depth=0.25ex]
{
\coprod_{i=1}^m \C_{{\underline s}_{i}} 
& J_{\C}K_{\C}(\coprod_{i=1}^m \C_{{\underline s}_{i}}) \\
\coprod_{i=1}^{m} \coprod_{k=1}^{n_{i}} \C_{s^{i}_{k}} 
& J_{\C}K_{\C}(\coprod_{i=1}^{m} \coprod_{k=1}^{n_{i}} \C_{s^{i}_{k}}) \\ 
\C_{\underline s }
& J_{\C}K_{\C}(\C_{\underline s }) \\
};
\path[->, thick, font=\scriptsize]
(m-1-1)
edge node[anchor=south] {$\eta$} (m-1-2)
(m-2-1)
edge node[anchor=south] {$\eta$} (m-2-2)
edge node[anchor=east] {$\coprod_{i=1}^{m} p^{{\underline s}_{i}\ast}$} (m-1-1)
edge node[anchor=east] {$p^{{\underline s}\ast}$} (m-3-1)
(m-2-2)
edge node[anchor=west] {$J_{\C}K_{\C}(\coprod_{i=1}^{m} p^{{\underline s}_{i}\ast})$} (m-1-2)
edge node[anchor=west] {$J_{\C}K_{\C}(p^{{\underline s}\ast})$} (m-3-2)
(m-3-1) 
edge node[anchor=south] {$\eta$} (m-3-2)
; 
\end{tikzpicture}
\end{equation*}
Our goal is to show that the top horizontal map is a local equivalence. Notice that the 
bottom horizontal map is a local equivalence by assumption. 
The vertical arrows on the left come from localizing maps in $\lsc$, thus they 
are local equivalences as well. As a consequence it will be enough to show that 
the vertical maps on the right are local equivalences. We will show that actually more 
is true, namely that the maps  $K_{\C}(\coprod_{i=1}^{m} p^{{\underline s}_{i}\ast})$
and $K_{\C}(p^{{\underline s}\ast})$ are isomorphism in $\bo{Alg}^{\C}$, and so that 
$J_{\C}K_{\C}(\coprod_{i=1}^{m} p^{{\underline s}_{i}\ast})$
and $J_{\C}K_{\C}(p^{{\underline s}\ast})$ are isomorphisms in $\spaces^{\C}$. To see this 
it will be enough to check that for any strict $\C$-algebra $A$ the maps induced
by $K_{\C}(\coprod_{i=1}^{m} p^{{\underline s}_{i}\ast})$
and $K_{\C}(p^{{\underline s}\ast})$ on the simplicial mapping complexes  $\map( -, A)$ are isomorphisms. 
Notice that we have 
$$\map_{\C}(K_{\C}(\C_{\underline s}), A) \cong 
\map_{\C}(\C_{\underline s}, J_{\C}A) \cong A(c_{\underline s})$$
where the first isomorphism is given by  the adjunction and the second comes 
from the Yoneda Lemma. Similarly we obtain:
$$\map_{\C}(K_{\C}(\coprod_{i=1}^m \C_{{\underline s}_{i}}), A) \cong 
\map_{\C}(\coprod_{i=1}^m \C_{{\underline s}_{i}}, J_{\C}(A)) 
\cong \prod_{i=1}^{m} A(c_{{\underline s}_{i}})$$
and
$$\map_{\C}(K_{\C}(\coprod_{i=1}^{m} \coprod_{k=1}^{n_{i}} \C_{s^{i}_{k}}), A) \cong 
\map_{\C}(\coprod_{i=1}^{m} \coprod_{k=1}^{n_{i}} \C_{s^{i}_{k}}, J_{\C}(A)) 
\cong \prod_{i=1}^{m} \prod_{k=1}^{n_{i}} A(c_{{\underline s}_{i}}).$$
Since $A$ is a strict algebra we also have isomorphisms 
$$A(c_{\underline s}) \cong \prod_{i=1}^{m} \prod_{k=1}^{n_{i}} A(c_{{ s}_{k}^i})
\cong \prod_{i=1}^{m} A(c_{{\underline s}_{i}}).$$
These isomorphisms are given by projections in $A$, so  it follows that the isomorphisms 
of the mapping complexes are induced by 
$K_{\C}(\coprod_{i=1}^{m} p^{{\underline s}_{i}\ast})$
and $K_{\C}(p^{{\underline s}\ast})$.
\qed
\end{proof}

\begin{pol}
Assume that $\eta_{\C_c}$ is a local equivalence for all $c\in \C$. It will suffice to show 
that all assumptions of Lemma  \ref{badi} are satisfied.
By Lemma \ref{corep}, $\eta_Y$ is a local equivalence for all $Y= \coprod_{i=1}^m C_{c_i},$ 
so assumption $iv)$ is satisfied.  
Assumption $iii)$ is satisfied since any weak equivalence in $\algc$ is an objectwise
weak equivalence and assumption $ii)$ is satisfied since filtered colimits in $\algc$ are computed
objectwise.  Lastly we see that $i)$ is satisfied since as in \cite[6.2]{badziochI} we get that 
if $X_{\bullet}$ is in $s\spaces^{\C}_{fib}$ then $K_{\C}|X_{\bullet}|\cong|K_{\C}X_{\bullet}|$, but
this gives $J_{\C}K_{\C}|X_{\bullet}|\cong|J_{\C}K_{\C}X_{\bullet}|.$
\qed
\end{pol}

%
%

\section{\bf Simplicial Resolution of a Multi-Sorted Semi-Theory}
\label{COMPLETION}

Recall that the statement of  Theorem \ref{mainIII} says that  homotopy algebras 
over an $S$-sorted semi-theory $\C$ can be rigidified to strict algebras over a 
certain simplicial $S$-sorted algebraic theory $\bFC$. In this Section we will describe 
the construction of $\bFC$.

\vspace{.35cm}
{\noindent \bf Completion of a semi-theory  }

Recall that for a set $S$ we denote by  $\bo{AlgTh}_{S}$ the category of $S$-sorted algebraic
theories and by $\bo{SemiTh}_S$ the category of $S$-sorted semi-theories (where cones are not expected to be product cones). In both categories
morphisms are functors that preserve  sorts and cones. We 
have an inclusion functor:
$$R \colon \bo{AlgTh}_{S} \lra \bo{SemiTh}_{S}.$$
By \cite[Ch.4 3.6]{Barr} every 
$S$-sorted semi-theory $\C$ has a functorially  associated $S$-sorted  algebraic theory 
$\CC$ and a morphism  $\Phi_{\C} \colon \C \ra \CC$ in $\bo{SemiTh}_{S}$ with the property that any 
functor
$$\C \lra \T$$
 to an $S$-sorted algebraic theory $\T$ uniquely factors  through $\Phi_{\C}$.
Equivalently,  this says that  the functor  $R$ has a left adjoint 
$$L \colon \bo{SemiTh}_S \ra \bo{AlgTh}_S$$  
with $\Phi_{\C}$ as the unit of adjunction. 
Furthermore, Barr and Wells showed that the following  holds.

\begin{proposition}
\label{COMPLETION ISO}
\cite[Ch.4 3.6]{Barr}
For any semi-theory $\C$ the functor $\Phi_\C\colon\C\lra\CC$
induces an equivalence of the categories of strict algebras 
$$\Phi_\C^\ast\colon\alg{\CC}\mapright{20}{\simeq}\alg{\C}$$
\end{proposition}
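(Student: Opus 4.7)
The plan is to exploit the universal property of the unit $\Phi_\C\colon\C\to\CC$ of the adjunction $L\dashv R$: every morphism of semi-theories from $\C$ to an algebraic theory $\T$ extends uniquely through $\Phi_\C$ to an algebraic theory morphism $\CC\to\T$. First I would verify that $\Phi_\C^*$ is well-defined. For any strict $\CC$-algebra $B\colon\CC\to\D$ and any cone $\alpha\in\kappa$ of $\C$, the cone $\Phi_\C\alpha$ is a product cone in $\CC$ by the definition of an algebraic theory, so $B$ sends it to a product cone in $\D$; hence $B\circ\Phi_\C\in\alg{\C}$.

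For essential surjectivity, given a strict $\C$-algebra $A\colon\C\to\D$ I would first replace the image of $A$ by an algebraic theory sitting inside $\D$. Concretely, let $\T_A\subseteq\D$ be the smallest full subcategory containing the objects $A(c_s)$ for $s\in S$ and closed under a chosen system of finite products; with its canonical product cones, $\T_A$ is naturally an $S$-sorted algebraic theory. Strictness of $A$ identifies $A(c_{\underline s})$ canonically with $\prod_k A(c_{s_k})\in\T_A$, so $A$ factors as a morphism of semi-theories $\bar A\colon\C\to R\T_A$. The universal property of $\Phi_\C$ then furnishes a unique algebraic theory morphism $\tilde A\colon\CC\to\T_A$ with $\tilde A\circ\Phi_\C=\bar A$, and composing with $\T_A\hookrightarrow\D$ exhibits $\tilde A$ as a strict $\CC$-algebra satisfying $\Phi_\C^*\tilde A\cong A$.

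Full faithfulness of $\Phi_\C^*$ is handled by the same universal property applied to natural transformations: any $\tau\colon\Phi_\C^*B_1\to\Phi_\C^*B_2$ is determined by its components at the sort objects $c_s$, and the strict algebra structure on $B_1,B_2$ forces the values at every $c_{\underline s}$, with the existence and uniqueness of the extension across all of $\CC$ again flowing from the adjunction $L\dashv R$. Together with essential surjectivity, this yields the stated equivalence of categories.

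The main obstacle I anticipate is making the assignment $A\mapsto\tilde A$ genuinely functorial, since the subcategory $\T_A$ and its products depend on choices in $\D$. The cleanest resolution is to fix once and for all a canonical choice of finite products in $\D$, so that the construction is canonical on the nose; alternatively one can build $\tilde A$ directly by setting $\tilde A(c_{\underline s})$ equal to a chosen product of $A(c_{s_k})$ in $\D$ and extending to morphisms via the universal property of products, thereby bypassing the auxiliary theory $\T_A$ altogether.
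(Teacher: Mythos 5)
The paper does not prove this proposition: it is cited verbatim from Barr--Wells \cite[Ch.4 3.6]{Barr} and used as a black box. So your blind attempt is the only proof on the table, and it is worth assessing on its own terms.

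Your essential-surjectivity argument is essentially sound. Factoring a strict $\C$-algebra $A\colon \C\to\D$ through an $S$-sorted algebraic theory $\T_A$ built inside $\D$, then invoking the $1$-universal property of $\Phi_\C$, does produce a strict $\CC$-algebra $\tilde A$ with $\Phi_\C^\ast\tilde A\cong A$ (your alternative --- choosing $\tilde A(c_{\underline s})$ to be a chosen product of the $A(c_{s_k})$ directly --- is cleaner and avoids the full-subcategory-vs.-ambient-category subtlety about when a product in $\T_A$ is also a product in $\D$). Your worry about the functoriality of $A\mapsto\tilde A$ is actually moot: to prove $\Phi_\C^\ast$ is an equivalence you only need it to be full, faithful, and essentially surjective; an inverse functor then exists automatically, so no coherent choice of products in $\D$ is required.

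The genuine gap is full faithfulness. The universal property you appeal to is a bijection of \emph{hom-sets} in $\bo{SemiTh}_S$ and $\bo{AlgTh}_S$; it says nothing, by itself, about natural transformations between strict algebras valued in an arbitrary $\D$. ``Flowing from the adjunction $L\dashv R$'' is not a proof here. What actually makes fullness and faithfulness work is structural information about $\CC$ that you need to state and use: (i) $\Phi_\C$ is bijective on objects (the completion adds morphisms, not objects), which immediately gives faithfulness and also fixes the components of any would-be extension $\tilde\tau$ of a transformation $\tau\colon B_1\Phi_\C\Rightarrow B_2\Phi_\C$; (ii) every morphism of $\CC$ is generated by morphisms in the image of $\Phi_\C$ together with the universal maps $\langle f_1,\dots,f_n\rangle\colon d\to c_{\underline s}$ into the products $c_{\underline s}$; (iii) naturality of $\tilde\tau$ with respect to such a universal map follows from naturality with respect to the projections $p^{\underline s}_k$ (which lie in $\C$) together with the fact that $B_1,B_2$ are product-preserving. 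Without (i)--(iii) made explicit, the fullness half of the claim is asserted rather than proved. (Alternatively, a clean route is the arrow-category trick: apply your essential-surjectivity construction to the induced functor $\langle B_1\Phi_\C,B_2\Phi_\C,\tau\rangle\colon\C\to\D^{\to}$, noting that $\D^{\to}$ has componentwise finite products; this manufactures $\tilde\tau$ and uniqueness then follows from (i).)
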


\begin{definition}
Given a multi-sorted semi-theory $\C$ we will call the functor  $\Phi_{\C}\colon \C \to \CC$ 
the \textit{completion of} $\C$. 
\end{definition}

\vspace{.35cm}
{\noindent \bf Simplicial Resolution\ }

Let $\C$ be a small category.  Following \cite[2.5]{dwyer-kanIII}, by the \textit{simplicial
resolution of} $\C$ we will understand the simplicial category $\bo F_\ast \C$
given as follows. ${\bo F_0\C}$ is the free category whose objects
are the objects of $\C$ and whose generators are the morphisms of $\C$.  For $k>0$
we define ${\bo F_k\C}$  to be the free category generated by ${\bo F_{k-1}\C}$.
Notice that for $c,d \in \C$ we have a canonical map 
$$\varphi_{c,d} \colon \Hom_{\C}(c,d) \lra \Hom_{\FC}(c,d).$$
If  $\C$ is an $S$-sorted semi theory then we define a simplicially enriched $S$-sorted semi-theory structure on 
$\FC$ in such way that projection morphisms of cones in $\FC$ are the  images of projections in 
$\C$ under the maps $\varphi_{c, d}$.  
Notice that in this way we have a naturally defined simplicial functor  $\psi \colon {\bo F_\ast} \C \to \C,$ which
defines a morphism of simplicial $S$-sorted semi-theories (when we regard $\C$ as a simplicial category). 
Let $\psi^{\ast}\colon \spaces^{\C} \to \spaces^{\FC}$ denote the functor induced by $\psi$, and 
let $\psi_{\ast}$ be the left adjoint of $\psi^{\ast}$.  We have the following Proposition.

\begin{proposition}
\label{C QE FC}
 The adjoint pair of functors
$$\psi_{\ast} \colon \bo L \bo{Spaces}^{\FC} \leftrightarrows \lsc \colon \psi^{\ast}$$
 is a Quillen equivalence.
\end{proposition}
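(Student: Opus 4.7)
The plan is to proceed in two stages: establish the Quillen pair property first, and then upgrade to a Quillen equivalence. Since $\psi^{\ast}$ is precomposition with $\psi$, it is computed objectwise; because both $\spaces^{\C}_{fib}$ and $\spaces^{\FC}_{fib}$ have objectwise fibrations and weak equivalences, $(\psi_{\ast}, \psi^{\ast})$ is a Quillen pair at the level of the projective structures. To pass to the localizations I would invoke Lemma \ref{qp}, which requires showing that $\psi^{\ast}$ sends fibrant objects of $\lsc$ into $\bo L \spaces^{\FC}$-fibrant objects. A fibrant $Y\in \lsc$ is a projective-fibrant homotopy $\C$-algebra, so $\psi^{\ast}Y$ is certainly projective-fibrant; evaluating it on any cone of $\FC$ recovers the evaluation of $Y$ on the image cone in $\C$, because the projection morphisms of cones in $\FC$ are by construction the images under $\varphi_{c,d}$ of the corresponding projections in $\C$. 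Hence $\psi^{\ast}Y$ is a homotopy $\FC$-algebra, so $\bo L \spaces^{\FC}$-fibrant.

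For the Quillen equivalence, the crucial input is that $\psi\colon \FC \to \C$ is a Dwyer-Kan equivalence of simplicial categories. It is the identity on objects, and the standard property of the cotriple simplicial resolution (cf.\ \cite{dwyer-kanIII}) ensures that for each pair $(c,d)$, the natural map $\Hom_{\FC}(c,d) \to \Hom_{\C}(c,d)$ is a weak equivalence of simplicial sets (with the target regarded as discrete). By the classical Dwyer-Kan theorem, a Dwyer-Kan equivalence of simplicial categories induces a Quillen equivalence on the associated projective diagram categories, so $(\psi_{\ast}, \psi^{\ast})$ is already a Quillen equivalence between $\spaces^{\FC}_{fib}$ and $\spaces^{\C}_{fib}$.

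To transfer this equivalence to the localizations, I would observe that the enriched Yoneda lemma identifies $\psi_{\ast}(\FC_{c})$ with $\C_{c}$, so $\psi_{\ast}$ takes the localizing map $\coprod_{k}\FC_{\alpha_{k}} \to \FC_{\alpha_{0}}$ to the localizing map $\coprod_{k}\C_{\psi\alpha_{k}} \to \C_{\psi\alpha_{0}}$ of $\lsc$. With the localizing data matched in this way, a standard descent argument for left Bousfield localizations of Quillen equivalences (see \cite[Chapter 3]{hirschhorn}) yields the desired equivalence $\bo L \spaces^{\FC} \simeq \lsc$.

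The step I anticipate to be most delicate is precisely this descent: one must verify not only that localizing maps correspond, but that the derived unit $X \to \psi^{\ast}(\psi_{\ast}X)^{\mathrm{fib}}$ remains a local equivalence for cofibrant $X \in \bo L \spaces^{\FC}$, and dually for the counit on fibrant objects of $\lsc$. Since $\psi^{\ast}$ already preserves local-fibrant objects and the projective Quillen equivalence supplies the strict version of these statements, the remaining work reduces to checking compatibility of (local) cofibrant replacements under $\psi_{\ast}$.
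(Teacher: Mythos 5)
Your proposal follows essentially the same route as the paper: observe that $\psi$ is a weak equivalence of simplicial categories (Dwyer--Kan \cite[2.6]{dwyer-kanIII}), deduce a Quillen equivalence between the projective model structures, match the localizing sets using the fact that $\psi_{\ast}$ carries corepresentables to corepresentables, and then invoke Hirschhorn's result (\cite[3.3.20]{hirschhorn}) that Quillen equivalences descend to Bousfield localizations when the localizing data correspond. The paper's version is a bit more economical in that it goes directly to \cite[3.3.20]{hirschhorn}, which already packages the Quillen-pair and Quillen-equivalence transfer together, so the separate preliminary appeal to Lemma \ref{qp} and the worry in your final paragraph about derived units/counits are not needed once that result is cited.
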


\begin{proof}
 We use an argument analogous  to the proof of \cite[4.1]{badziochIII}. 
 By \cite[2.6]{dwyer-kanIII} the functor $\psi$ is a weak equivalence of categories, and so the 
 adjunction $(\psi_{\ast}, \psi^{\ast})$ is a Quillen equivalence of the model categories 
 $\spaces^{\C}_{fib}$ and $\spaces^{\FC}_{fib}$.   
Also, the morphisms with respect to which we localize $\spaces^{\FC}_{fib}$ to obtain 
$\bo{L}\spaces^{\FC}$ are precisely the
images under the functor $\psi_{\ast}$ of the localizing morphisms in  $\spaces^{\C}_{fib}$
Therefore we can apply \cite[3.3.20]{hirschhorn} which says that in such situations,  localizations 
preserve  Quillen equivalences. 
\qed
\end{proof}

For every $k\geq 0$ consider the completion 
$$ \Phi_k\colon {\bo F_k{\C}}\lra \overline{\bo F_k\C}.$$
The functors $\Phi_k$ taken together define a functor 
of simplicial categories
$$\Phi\colon\FC\lra\bFC$$
where $\bFC$ is the simplicial $S$-sorted algebraic theory which has 
$\overline{F_k\C}$ in its $k$-th simplicial dimension. 
Using Proposition \ref{COMPLETION ISO} we get the following Lemma.

\begin{lemma}
\label{ALG ISO}
The simplicial functor $\Phi\colon\FC\lra\bFC$ 
induces an equivalence of categories of strict algebras in $\spaces$
$$\Phi^\ast\colon\alg{\bFC}\overset{\simeq}{\lra}\alg{\FC}.$$
\end{lemma}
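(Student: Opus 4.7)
The plan is to upgrade Proposition \ref{COMPLETION ISO} from the level of ordinary semi-theories to the simplicial setting by working one simplicial dimension at a time. The first step is to unpack the definition of a strict algebra over a simplicial semi-theory: a strict $\FC$-algebra with values in $\spaces$ should be identified with the data of a compatible family $\{A_k\}_{k\geq 0}$, where $A_k\in\alg{\bo F_k\C}$, such that the face and degeneracy maps of $\FC$ induce morphisms between the $A_k$. Equivalently, $\alg{\FC}$ is the limit in $\Cat$ of the cosimplicial diagram $k\mapsto\alg{\bo F_k\C}$, and analogously $\alg{\bFC}$ is the limit of $k\mapsto\alg{\overline{\bo F_k\C}}$. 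Once this identification is spelled out, the functor $\Phi^\ast$ is visibly the levelwise assembly of the functors $\Phi_k^\ast$.

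Next I would apply Proposition \ref{COMPLETION ISO} in each simplicial dimension, obtaining an equivalence of ordinary categories
\[\Phi_k^\ast\colon\alg{\overline{\bo F_k\C}}\overset{\simeq}{\lra}\alg{\bo F_k\C}\]
for every $k\geq 0$. The completion $L\colon\bo{SemiTh}_S\to\bo{AlgTh}_S$ is a functor and $\Phi$ is the unit of the adjunction $L\dashv R$, so for each face or degeneracy map $d\colon\bo F_j\C\to\bo F_k\C$ the naturality square $\Phi_k\circ d=\bar d\circ\Phi_j$ commutes strictly. Consequently the equivalences $\Phi_k^\ast$ are compatible with the structure maps of the cosimplicial diagrams, and they assemble into the functor $\Phi^\ast\colon\alg{\bFC}\to\alg{\FC}$, which is a levelwise equivalence.

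The last step, and the main technical obstacle, is to promote a levelwise equivalence of cosimplicial objects in $\Cat$ to an equivalence of their limits. This is not automatic for an arbitrary pointwise equivalence of diagrams of categories, because the choice of quasi-inverse and of the coherence isomorphisms need not be strictly compatible with the structure maps. Here, however, we can exploit the universal property of the completion: a strict $\bo F_k\C$-algebra $A_k\colon\bo F_k\C\to\spaces$ extends uniquely along $\Phi_k$ to a strict $\overline{\bo F_k\C}$-algebra, because $\spaces$ has finite products and $\overline{\bo F_k\C}$ is obtained from $\bo F_k\C$ by freely adjoining the products demanded by its cones. This universal extension is manifestly functorial in the semi-theory, hence in particular natural in $k$, so it produces a strict quasi-inverse $\Psi$ to $\Phi^\ast$ on the nose (not merely up to chosen isomorphism). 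Together with the isomorphisms $\Phi_k^\ast\Psi_k\cong\id$ and $\Psi_k\Phi_k^\ast\cong\id$, which are again natural by the universal property, this delivers an equivalence of categories $\alg{\bFC}\simeq\alg{\FC}$ as claimed.
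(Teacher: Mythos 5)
The identification of $\alg{\FC}$ with the limit in $\Cat$ of the cosimplicial diagram $k\mapsto\alg{\bo F_k\C}$ is not correct with the conventions in force here, and this is a genuine gap. The category $\spaces^{\FC}$ consists of simplicially enriched functors $\FC\to\spaces$, where $\spaces$ carries its usual enrichment with $\map(X,Y)_k=\Hom_{\spaces}(X\times\Delta[k],Y)$. Unpacking such an enriched functor in simplicial degree $k$ yields an ordinary functor from $\bo F_k\C$ into the category $\spaces_k$ whose objects are simplicial sets but whose hom-sets are $\Hom_{\spaces_k}(X,Y)=\map(X,Y)_k$; for $k\geq 1$ this is genuinely different from $\spaces$, since a morphism there is a map $X\times\Delta[k]\to Y$ rather than a map $X\to Y$. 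So an object of $\alg{\FC}$ is not a compatible family with each $A_k\in\alg{\bo F_k\C}$. What your proposed limit actually describes is the full subcategory of those enriched functors whose structure maps $\Hom_{\FC}(c,d)\to\map(A(c),A(d))$ factor through the constant simplicial set on $\map(A(c),A(d))_0$, which is a proper subcategory; applying Proposition~\ref{COMPLETION ISO} degreewise to $\spaces$-valued functors proves an equivalence for that smaller category, not for $\alg{\FC}$.

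Your closing observation, however, is the right one and the gap is repairable along the same lines. What the cited Barr--Wells result really supplies is the universal property of the completion: $\overline{\bo F_k\C}$ is the free completion of $\bo F_k\C$ under the finite products demanded by its cones, so product-preserving functors out of $\bo F_k\C$ into \emph{any} category with finite products extend uniquely along $\Phi_k$, functorially in the source. Apply this with target $\spaces_k$ rather than $\spaces$: each $\spaces_k$ has finite products computed exactly as in $\spaces$ (since $\Hom_{\spaces_k}(X,\prod_i Y_i)=\Hom(X\times\Delta[k],\prod_i Y_i)=\prod_i\Hom_{\spaces_k}(X,Y_i)$), the simplicial structure functors $\spaces_k\to\spaces_j$ preserve these products, and the product-preservation condition on the degree-$k$ part of a strict $\FC$-algebra reduces to the condition in degree $0$ because the projections of $\FC$ are $0$-simplices whose degeneracies are isomorphisms in $\spaces_k$ precisely when the underlying maps are isomorphisms in $\spaces$. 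With this replacement your argument that the universal extension, being unique and functorial, strictly commutes with the simplicial structure maps and hence assembles to an honest inverse of $\Phi^\ast$ goes through and proves the lemma. The paper itself leaves this entirely implicit, so your repaired argument is a reasonable elaboration of the intended one, but as written the reduction to $\spaces$-valued algebras degreewise does not establish what is claimed.
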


Let $\C$ be a multi-sorted semi-theory and let $\FC$ be the simplicial resolution of 
$\C$. Recall that by Proposition \ref{KC JC}  we have a Quillen pair of functors
$$\xymatrix{
K_{\FC}\colon {\bo L\sFC}\ar@<0.5ex>[r]
&\alg{\FC}\colon J_{\FC}\ar@<0.5ex>[l]
}.$$
In view of Proposition \ref{C QE FC} and Lemma \ref{ALG ISO} in order
to prove Theorem \ref{mainIII} it is enough to show that the following holds.
\begin{proposition}
\label{QUILLEN PAIR}
The Quillen pair $(K_{\FC}, J_{\FC})$
is a Quillen equivalence. 
\end{proposition}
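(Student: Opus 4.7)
The proof applies Lemma \ref{compare}, adapted to the simplicial semi-theory $\FC$. Both Lemma \ref{compare} and its supporting Lemmas \ref{badi} and \ref{corep} depend only on formal properties (commutativity of $J_{\FC} K_{\FC}$ with geometric realization, preservation of filtered colimits by $J_{\FC}$, the objectwise nature of weak equivalences in $\alg{\FC}$, and the interaction of corepresented functors with the cones of $\FC$) that remain valid in the simplicial setting. It therefore suffices to show that, for each object $c$ of $\FC$, the unit $\eta_{\FC_c}\colon \FC_c \to J_{\FC}K_{\FC}(\FC_c)$ is a local equivalence in $\bo L \sFC$, where $\FC_c$ denotes the corepresented simplicial functor $d \mapsto \map_{\FC}(c,d)$.

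I first identify $J_{\FC}K_{\FC}(\FC_c)$ concretely. Adjunction together with the Yoneda lemma characterize $K_{\FC}(\FC_c)$ as the free strict $\FC$-algebra on a generator at $c$, in the sense that $\Hom_{\alg{\FC}}(K_{\FC}(\FC_c), A) \cong A(c)$ for every $A \in \alg{\FC}$. Under the equivalence $\Phi^{\ast}\colon \alg{\bFC} \xrightarrow{\simeq} \alg{\FC}$ from Lemma \ref{ALG ISO}, this free algebra corresponds to the free strict $\bFC$-algebra on a generator at $c$. Since $\bFC$ is a simplicial multi-sorted algebraic theory, the corepresented simplicial functor $\bFC_c$ preserves the finite products of $\bFC$ (being representable) and is therefore itself a strict $\bFC$-algebra representing evaluation at $c$. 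Consequently $J_{\FC}K_{\FC}(\FC_c) \cong \Phi^{\ast}(\bFC_c)$, and $\eta_{\FC_c}$ is identified with the natural map $\FC_c \to \Phi^{\ast}(\bFC_c)$ induced by $\Phi\colon \FC \to \bFC$.

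Showing this map is a local equivalence amounts to verifying that, for every homotopy $\FC$-algebra $Z$ fibrant in $\spc{\FC}$, the induced map $\map_{\FC}(\Phi^{\ast}(\bFC_c), Z) \to \map_{\FC}(\FC_c, Z) \cong Z(c)$ is a weak equivalence. My plan is to build a cofibrant replacement of $\Phi^{\ast}(\bFC_c)$ in $\spc{\FC}$ via a bar-style simplicial resolution whose terms are disjoint unions of corepresented functors of the form $\FC_d$, then use Bergner's rigidification for the simplicial algebraic theory $\bFC$ together with the homotopy algebra condition on $Z$ at the cones of $\FC$ to collapse the totalization to $Z(c)$. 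The essential input is that $\Phi$ sends cones of $\FC$ to product cones of $\bFC$, so the structure of $\bFC$ as the simplicial product completion of $\FC$ produces a resolution compatible with the cone conditions that $Z$ respects up to weak equivalence. The main technical obstacle lies precisely in this last step: constructing such a bar resolution explicitly enough to analyze, handling how it interacts with the simplicial degrees of $\FC$, and verifying that the resulting totalization collapses as desired. This requires careful control of how the completion $\Phi\colon \FC \to \bFC$ encodes the product structure at each simplicial level, together with Bergner's rigidification to pass between $\bo L \spaces^{\bFC}$ and $\alg{\bFC}$ during the collapse.
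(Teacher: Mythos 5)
Your reduction is on target: both you and the paper invoke Lemma~\ref{compare} to reduce the Quillen-equivalence claim to showing that the unit $\eta_{\FC_{\underline s}}\colon \FC_{\underline s}\to J_{\FC}K_{\FC}\FC_{\underline s}$ is a local equivalence for each corepresented functor, and your identification $J_{\FC}K_{\FC}(\FC_c)\cong\Phi^{\ast}(\bFC_c)$ is exactly the identification the paper makes at the start of its proof of Lemma~\ref{FC UNIT} (there phrased as the inclusion $\FC_{\underline s}=\overline{\FC}^0_{\underline s}\hookrightarrow\overline{\FC}_{\underline s}=J_{\C}K_{\C}\FC_{\underline s}$). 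Up to this point you and the paper agree.

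The problem is that what follows is precisely where the real content lies, and you leave it as a sketch while conceding it is ``the main technical obstacle.'' The paper's argument here is quite specific and is not a generic bar-resolution-plus-Bergner collapse. It builds a concrete exhaustive filtration $\FC_{\underline s}=\overline{\FC}^0_{\underline s}\subseteq\overline{\FC}^1_{\underline s}\subseteq\dots$ of $\overline{\FC}_{\underline s}$ by $\FC$-subdiagrams, and interleaves it with a second filtration $s\overline{\FC}^k_{\underline s}$ by $\Prj$-subdiagrams, where $\Prj$ is the initial $S$-sorted semi-theory. The combinatorial tree description of the completion from Section~\ref{COMPLETION} is essential to define these filtrations, and the initial semi-theory $\Prj$ is essential because local equivalences over $\Prj$ are detected objectwise at the sort objects (Proposition~\ref{LOC WE IN PRJ}). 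The key mechanism is Lemma~\ref{PULLBACK LEMMA}: the square of mapping complexes relating $\overline{\FC}^k_{\underline s}$, $\overline{\FC}^{k+1}_{\underline s}$, $s\overline{\FC}^k_{\underline s}$, $s\overline{\FC}^{k+1}_{\underline s}$ is a pullback, so once the $\Prj$-level map is shown to be an acyclic fibration (Lemma~\ref{LOWER MAP}, using the Quillen pair of Corollary~\ref{QP FOR PRJ FC}), right properness promotes the $\FC$-level map $f$ to an acyclic fibration as well (Corollary~\ref{AC FIB}), and one passes to the colimit. None of this structure appears in your proposal, and it is not simply a repackaging of a bar resolution over $\bFC$; in particular, your plan to ``use Bergner's rigidification for the simplicial algebraic theory $\bFC$'' inside the collapse is suspicious, because for the purposes of this section the rigidification result you would want to cite is essentially what Proposition~\ref{QUILLEN PAIR} is trying to establish (via $\bFC$ rather than $\C$), so one would need to be careful not to be circular and to verify that the cited version of Bergner covers precisely the simplicially enriched multi-sorted setting being used. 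As written, your proposal correctly sets up the problem but does not supply the proof.
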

By  Lemma \ref{compare} this fact in turn reduces to the following.
\begin{lemma}
\label{FC UNIT}
Let $\C$ be an $S$-sorted semi-theory. For an $n$-tuple $\underline s$ 
let $\FC_{\underline s} \in {\bo L \sFC}$ denote the functor
corepresented by the object $c_{\underline s} \in \FC$.
The unit of the adjunction of the pair $(K_{\FC}, J_{\FC})$
$$\eta_{\FC_{\underline s}}\colon \FC_{\underline s}\lra J_{\FC}K_{\FC}\FC_{\underline s}$$
is a local equivalence.   
\end{lemma}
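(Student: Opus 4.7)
The plan is to use the levelwise free structure of the simplicial resolution $\FC$, together with the explicit combinatorial description of completions of free semi-theories referenced in Section \ref{COMPLETION}, to identify $J_{\FC}K_{\FC}\FC_{\underline s}$ concretely and then verify the local equivalence criterion by testing against fibrant homotopy algebras.

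First, I would analyze $K_{\FC}\FC_{\underline s}$ degreewise. At each simplicial level $k$ the category $\bo F_k \C$ is a free $S$-sorted semi-theory, and Proposition \ref{COMPLETION ISO} gives an equivalence between the category of strict $\bo F_k \C$-algebras and the category of strict $\overline{\bo F_k \C}$-algebras that intertwines the forgetful functors. Under this equivalence, $K_{\bo F_k \C}(\bo F_k \C)_{\underline s}$ is identified with the free strict algebra over the algebraic theory $\overline{\bo F_k \C}$ on a generator of sort $\underline s$. Since $\overline{\bo F_k \C}$ is a genuine $S$-sorted algebraic theory, $t_{\underline s}$ is a categorical product of the basic-sort objects, so the corepresentable $(\overline{\bo F_k \C})_{t_{\underline s}}$ is automatically a strict algebra and, by the Yoneda lemma, is precisely the desired free strict algebra on a generator of sort $\underline s$.

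Second, assembling these identifications across the simplicial direction, I would extract an explicit formula: the value of $J_{\FC}K_{\FC}\FC_{\underline s}$ at $c_{\underline t}$ in simplicial degree $k$ is $\Hom_{\overline{\bo F_k \C}}(t_{\underline s}, t_{\underline t})$, with face and degeneracy maps induced by the simplicial structure of $\FC$ through the completion functors $\Phi_k$. In particular, this makes it manifest that $J_{\FC}K_{\FC}\FC_{\underline s}$ is itself a homotopy $\FC$-algebra, since at each level it turns cones of $\bo F_k \C$ into genuine products that are preserved by the completed corepresentables.

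Third, to verify the local equivalence I would use the equivalent description from the Note following the properties of $\bo L \spaces^{\C}$: test against a homotopy $\FC$-algebra $Z$ which is fibrant in $\spaces^{\FC}_{cof}$, and check that the induced map
$$\eta^{\ast}\colon\map_{\FC}(J_{\FC}K_{\FC}\FC_{\underline s}, Z)\lra\map_{\FC}(\FC_{\underline s}, Z)\simeq Z(c_{\underline s})$$
is a weak equivalence. The right-hand identification is the simplicial Yoneda lemma. Using the explicit formula from the previous step, the left-hand side can be written as a homotopy limit over the simplicial direction of $\FC$ of mapping complexes, each of which is weakly equivalent to $Z(c_{\underline s})$ because $Z$ is a homotopy algebra and thus preserves, up to weak equivalence, the product decompositions encoded in $\overline{\bo F_k \C}$. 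The main obstacle I anticipate is showing that these levelwise weak equivalences assemble coherently into a weak equivalence of the total mapping complex; this is precisely where the bar-style simplicial resolution $\FC$ does its essential work, forcing the coherence that fails for a general semi-theory and that I expect to extract using the detection tools associated with the initial semi-theory $\Prj$ introduced in Section \ref{INITIAL}.
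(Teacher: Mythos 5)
Your first two steps are consistent with the paper: identifying $J_{\FC}K_{\FC}\FC_{\underline s}$ with the corepresentable $\overline{\FC}_{\underline s}$ built from the levelwise completions $\overline{\bo F_k\C}$ is correct, and reducing the verification to testing against a homotopy $\FC$-algebra $Z$ fibrant in $\spaces^{\FC}_{cof}$ via the simplicial Yoneda lemma is also fine. But the heart of your third step contains a genuine gap that your proposal does not close.

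You assert that the levelwise mapping complexes are ``each weakly equivalent to $Z(c_{\underline s})$ because $Z$ is a homotopy algebra and thus preserves, up to weak equivalence, the product decompositions encoded in $\overline{\bo F_k\C}$.'' This is not something a homotopy $\FC$-algebra gives you: by definition $Z$ only takes the projections of the \emph{structure cones of $\FC$} (those inherited from $\C$) to weak equivalences. The corepresentable $\overline{\bo F_k\C}_{\underline s}$ is a much larger diagram --- its morphisms are arbitrary finite trees of generators --- and showing that the map $\map(\overline{\bo F_k\C}_{\underline s}, Z)\to \map((\bo F_k\C)_{\underline s}, Z) = Z(c_{\underline s})$ is a weak equivalence is exactly the content of the lemma, not a consequence of the definition of homotopy algebra. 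Asserting it levelwise is assuming what you need to prove. There is also a structural difficulty with the decomposition you propose: a simplicial functor $Z\colon \FC \to \spaces$ is enriched over $\spaces$ and does not restrict to an honest functor on each $\bo F_k\C$ (the $k$-simplices of $\map(Z(c),Z(d))$ are maps $Z(c)\times\Delta[k]\to Z(d)$, not maps $Z(c)\to Z(d)$), so the $\bo\Delta$-indexed homotopy limit ``over the simplicial direction'' you invoke is not readily available here.

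The paper's proof uses an orthogonal decomposition. Instead of slicing by simplicial degree, it filters the $\FC$-diagram $\overline{\FC}_{\underline s}$ by \emph{tree height}, producing an $\mathbb{N}$-indexed tower $\FC_{\underline s}=\overline{\FC}_{\underline s}^0\subseteq\overline{\FC}_{\underline s}^1\subseteq\cdots$ together with an interleaved $\Prj$-diagram filtration $s\overline{\FC}_{\underline s}^k$. The mechanism for establishing that each successive stage induces an acyclic fibration of mapping complexes into $Z$ (Corollary \ref{AC FIB}) is a pullback square (Lemma \ref{DIAG}) that reduces the claim on $\map_{\FC}$ to one on $\map_{\Prj}$, where it can be checked by Lemma \ref{LOWER MAP} and Proposition \ref{LOC WE IN PRJ} because the inclusion $\overline{\FC}_{\underline s}^k\hookrightarrow s\overline{\FC}_{\underline s}^{k+1}$ is an isomorphism on the basic-sort objects. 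Only after this is in place does a telescope/hocolim argument finish the proof. You correctly anticipate that the $\Prj$-detection tools are relevant, but you place them in the wrong role (``coherence'') and omit the filtration and pullback comparison, which is where the actual work --- and the levelwise weak equivalence you take for granted --- is produced.
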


\noindent The proof of Lemma \ref{FC UNIT} will be given in Section \ref{PF LEMMA} after we develop
a better understanding of the algebraic completion as well as a way of detecting 
local equivalences. Meanwhile our last goal in this Section will be to obtain an 
explicit description of the algebraic completion for free multi-sorted semi-theories.  
Our approach will parallel that of \cite[\S 3]{badziochIII}.

Let $\C$ be a free $S$-sorted semi-theory; that is $\C$ is a free category such that all projections 
in the structure cones of $\C$ are among the free generators of $\C$.  
We will construct in a combinatorial manner a category $\C'$
 and later show that $\C'$ is the  algebraic completion of $\C$ (i.e. $\C'$ is equivalent to $\CC$).   The construction of $\C'$ proceeds as follows.  
Objects $c_{\underline s} \in \C'$ are the same as the objects in $\C$ i.e. they 
are indexed by all $n$-tuples of objects of $S$ for $n\geq 0$.  As before for $s_{1}\in S$
we will denote by $c_{s_{1}}$ the object indexed by the $1$-tuple $(s_{1})$. 
 
In order to describe morphisms in $\C'$ assume first that 
$\su = (\s1,...,\s{n})$ is an arbitrary $n$-tuple and let  $s' \in S.$   
A morphism in $\Hom_{\C}(c_{\su},c_{s'})$ is  a directed tree
$T$

{\objectmargin={-1pt}
$$\xygraph{
!~:{@{-}|@{>}}
\bullet(:[dr]\bullet="a1"_{p_{k_1}^{\su}} 
:[dr]\bullet="a2"^{\theta_{i}} 
:[d]\bullet="a3"^{\varphi_{j}}) & 
\bullet(:"a1"^{p_{k_2}^{\su}}) 
& \bullet(:"a1"^{p_{k_3}^{\su}})\\ 
& &
& \bullet(:"a2"^{p_{k_4}^{\su}})
}$$}

satisfying the following conditions

\begin{itemize}
\item[1)] the lowest vertex of $T$ has only one incoming edge;
\item[2)] all edges of $T$ are labeled with $\theta_{i},$
where $\theta$ is a free generator of $\C$ whose codomain is an $n^{\theta}$-tuple in $S$, 
and where $1\leq i \leq n^{\theta}$. If $\theta = p_{k}^{\su},$ we
will write $p_{k}^{\su}$ instead of $(p_{k}^{\su})_k$;
\item[3)] if a vertex of $T$ has $m$ incoming edges with labels
$\theta^{1}_{i_{1}},...,\theta^{m}_{i_{m}}$, and for $k=1, \dots, m$ the codomain of $\theta^{k}$ in $\C$
is labeled by the $n^{\theta^{k}}$-tuple $(s_{1}^{\theta^{k}}, \dots, s_{n^{\theta_{k}}}^{\theta^{k}})$
then the outgoing edge is labeled
$\psi_{j}$ where the domain of  $\psi$ in $\C$ is labeled by the $m$-tuple
$$(s_{i_{1}}^{\theta^{1}}, s_{i_{2}}^{\theta^{2}}, \dots, s_{i_{m}}^{\theta^{m}});$$
 
\item[4)] all the initial edges of $T$ (that is, the edges starting at vertices 
with no incoming edges) are labeled with projections 
$p_{k_i}^{\su}$, where $c_{\su}$ is the domain of the tree;
\item[5)] no non-initial edges of $T$ are labeled with projection morphisms;
\item[6)] the lowest edge is labeled with $\varphi_{i}$ where the codomain of  
$\varphi$ in $\C$ is given by an $m$-tuple in $S$ whose $i$-th element is $s'$. 

\end{itemize}

For the remainder of the morphisms in $\C'$ let 
$\su =(\s1,...,\s{n})$ and $\underline{s}'  =(s_{1}',..., s_{m}')$, then 
$$\Hom_{\C'} (c_{\su},c_{\underline{s}'})=\prod_{1 \leq i \leq m} \Hom_{\C'} (c_{\su}, c_{s_{i}'}).$$

Composition of morphisms in $\C'$ will be defined the same as in \cite[\S 3]{badziochIII}: 
if $(T_{1},...,T_{m})\in \Hom_{\C'}(c_{\su}, c_{\underline s'})$
and $W \in \Hom_{\C'}(c_{\underline s'},c_{s''})$ then 
$W \circ (T_{1},...,T_{m})\in \Hom_{\C'}(c_{\su}, c_{s''})$ is
the tree obtained by grafting the tree $T_{i}$ in place of each initial 
edge of $W$ labeled $p_{i}^{\underline s'}$.  In general if 
$(T_{1},...,T_{m})\in \Hom_{\C'}(c_{\su}, c_{\underline s'})$
and $(W_{1},...,W_{r})\in \Hom_{\C'}(c_{\underline s'}, c_{\underline s''})$
then 
$$(W_{1},...,W_{r}) \circ (T_{1},...,T_{m}) 
=(W_{1} \circ (T_{\sp1},...,T_{m}),...,W_{r}
\circ (T_{1},...,T_{m}))$$ 

For an $n$-tuple $\underline s = (s_{1}, \dots, s_{n})$ in $S$ let  
${\bo p}_i^{\su}\colon c_{\underline s} \to c_{s_{i}}$  denote the morphism in $\C'$ 
represented by the tree below.
{\objectmargin={-1pt}
$$\xygraph{
!~:{@{-}|@{>}}
\bullet:[d]\bullet^{p_i^{\su}}
}$$}
\noindent We give $\C'$ an $S$-sorted semi-theory structure by choosing the morphisms
${\bo p}_{i}^{\su}$ to be projections in $\C'$. It can be checked 
that $\C'$ is, in fact,  a multi-sorted theory.

Next we define the functor 
$$\Theta_\C\colon\C\lra\C'$$ which is the identity on objects, and 
such that $\Theta_\C(p_{i}^{\su}) = {\bo p}_i^{\su}$. 
If $\varphi\colon c_{\su} \lra c_{\underline s'}$
is a generator of $\C$ which is  not a projection,  $\underline s$ is an $n$-tuple, and
$\underline s'$ is an $m$-tuple 
then  $\Theta_\C(\varphi) = (T_{1},\dots,T_{m})$ where $T_{j}$
is the following tree.

{\objectmargin={-1pt}
$$\xygraph{
!~:{@{-}|@{>}}
\bullet(:[drr]\bullet="u1"_{p_{1}^{\su}} 
:[d]\bullet="w1"^{\varphi_{j}}) & 
\bullet(:"u1"^{p_2^{\su}}) &
\dots & \dots &
\bullet(:"u1"^{p_n^{\su}})
}$$}

\begin{proposition}

The functor $\Theta_\C$ is the completion of the semi-theory 
$\C$ to an algebraic theory. 
\end{proposition}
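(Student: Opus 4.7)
The plan is to verify the universal property of $\Theta_\C$ directly, paralleling the single-sorted argument of \cite[\S 3]{badziochIII} but now keeping careful track of the sort labels on edges and the $m$-tuple labels on the domains and codomains of generators.

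First, I would check that $\C'$ is genuinely an $S$-sorted algebraic theory. By construction,
$$\Hom_{\C'}(c_{\underline{s}'}, c_{\underline{s}}) = \prod_{i=1}^{n} \Hom_{\C'}(c_{\underline{s}'}, c_{s_i})$$
for any $\underline{s}'$ and $\underline{s} = (s_1,\dots,s_n)$, and composing a tuple $(T_1,\dots,T_n)$ with the chosen projection ${\bo p}_i^{\underline{s}}$, which as a tree is a single edge labeled $p_i^{\underline{s}}$, produces $T_i$ via the grafting rule. This exhibits $c_{\underline{s}}$ as the categorical product of $c_{s_1},\dots,c_{s_n}$ with the ${\bo p}_i^{\underline{s}}$ as the projection maps, so $\C'$ satisfies Definition \ref{ALGTH DEF}.

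Next I would verify that $\Theta_\C$ is a morphism in $\bo{SemiTh}_S$. It is the identity on objects and sends each projection $p_i^{\underline{s}}$ to ${\bo p}_i^{\underline{s}}$ by definition; functoriality reduces to checking that the tree assigned to a composition of generators agrees with the tree obtained by grafting the individual trees, which is immediate from the composition rule for tuples followed by the grafting recipe applied to the non-projection generators.

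For the universal property, let $F \colon \C \to \T$ be any morphism in $\bo{SemiTh}_S$ with $\T$ an $S$-sorted algebraic theory. I would define $\tilde F \colon \C' \to \T$ inductively on trees: at a vertex of $T \in \Hom_{\C'}(c_{\underline{s}}, c_{s'})$ with incoming edges labeled $\theta^{1}_{i_1},\dots,\theta^{m}_{i_m}$ and outgoing edge labeled $\psi_j$, the inductive hypothesis supplies morphisms $t_{\underline{s}} \to t_{s^{\theta^k}_{i_k}}$ in $\T$; the universal property of the product $t_{\mathrm{dom}(\psi)}$ (whose factors match the sorts $s^{\theta^k}_{i_k}$ by condition (3) of the tree definition) assembles these into a single morphism $t_{\underline{s}} \to t_{\mathrm{dom}(\psi)}$, which is then composed with $F(\psi)$ followed by the $j$-th projection of $t_{\mathrm{cod}(\psi)}$. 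For a tuple $(T_1,\dots,T_m)$ the assignment is the unique morphism into the product determined by the individual $\tilde F(T_k)$. Uniqueness of $\tilde F$ is forced because every tree is obtained from the trees $\Theta_\C(\varphi)$ and the projections by grafting, so $\tilde F$ is completely determined on generators by the requirement $\tilde F \circ \Theta_\C = F$.

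The main obstacle will be proving the compatibility of $\tilde F$ with grafting composition, namely
$$\tilde F(W \circ (T_1,\dots,T_m)) = \tilde F(W) \circ \bigl(\tilde F(T_1),\dots,\tilde F(T_m)\bigr),$$
for arbitrary trees. This reduces, by induction on the size of $W$, to showing that grafting a subtree $T_k$ in place of an initial edge of $W$ labeled $p_k^{\underline{s}'}$ corresponds on the $\T$-side to replacing the $k$-th component of the assembled product morphism by $\tilde F(T_k)$; this in turn follows from the universal property of products in $\T$ together with the fact that $F$ preserves projections. Once this combinatorial identity is established, functoriality of $\tilde F$ follows, and combined with the uniqueness argument above it yields the universal property characterizing $\Theta_\C$ as the algebraic completion of $\C$.
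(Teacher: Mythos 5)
Your proof is correct but takes a genuinely different route from the paper's. The paper disposes of the proposition in a single sentence: it cites the Barr--Wells result \cite[Ch.4 \S 3]{Barr} that the algebraic completion of an $S$-sorted semi-theory is the closure of the category under finite products, and then asserts that $\C'$ is manifestly this closure. You instead verify the defining universal property of the completion directly, constructing the extension $\tilde F \colon \C' \to \T$ inductively on the tree structure of morphisms (assigning to each edge a morphism in $\T$, assembling incoming edges via the universal property of products to feed the outgoing edge) and then checking compatibility with grafting composition. Your approach is more self-contained and makes visible exactly where the combinatorics of the trees interact with products in $\T$ --- which is the genuine content the paper leaves implicit when it says ``this is what we constructed $\C'$ to be'' --- whereas the paper's approach is shorter but requires the reader to independently match the combinatorial construction of $\C'$ against the Barr--Wells closure characterization. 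One small caveat: your uniqueness argument (``determined on generators'') is a bit compressed; a careful reader would want the additional observation that every morphism of $\C'$ into a single-sorted object decomposes as a grafting of the generating trees, and that morphisms into an $n$-fold product object are determined componentwise because $\tilde F$ must preserve products as a morphism of algebraic theories. But this is easily supplied, and the overall argument is sound.
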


\begin{proof}
By \cite[Ch.4 \S 3]{Barr} the algebraic completion of an $S$-sorted semi-theory $\C$ 
is the closure of the category $\C$ under taking finite products and this is what we constructed $\C'$ to be.
\qed
\end{proof}

%
%

\section{ \bf The Initial $S$-Sorted Semi-Theory}
\label{INITIAL}

Let $\Prj$ be the $S$-sorted semi-theory whose only non-identity 
morphisms are projections $p^{\underline s}_{k}\colon c_{\underline s}\to c_{s_{k}}$ for all 
$n$-tuples ${\underline s} = (s_{1}, \dots, s_{n})$,  $n\geq 0$ and $1\leq k \leq n.$  
Clearly $\Prj$ is an initial object in the category of $S$-sorted semi-theories,  
i.e. for any $S$-sorted semi-theory $\C$ there is a unique morphism  $\Prj\ra\C$
in $\bo{SemiTh}_{S}$.

A nice feature of $\Prj$ is that local equivalences in
${\bo L}\spaces^\Prj$ are easy to detect.

\begin{proposition}
\label{LOC WE IN PRJ}

A map $f\colon X\ra Y$ in ${\bo L}\spaces^\Prj$ is a local equivalence
if the restrictions $f_{c_{s}}\colon X(c_{s})\ra Y(c_{s})$
are weak equivalences of spaces for all $s\in S$.

\end{proposition}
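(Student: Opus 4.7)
The plan is to exploit the full discrete subcategory $\Prj^{\rm disc}\subset \Prj$ spanned by the sort objects $\{c_{s}\}_{s\in S}$. Let $\iota\colon\Prj^{\rm disc}\hookrightarrow\Prj$ denote the inclusion; restriction has a right adjoint given by right Kan extension, yielding a Quillen pair
$$\iota^{*}\colon\spaces^{\Prj}_{fib}\;\rightleftarrows\;\spaces^{S}\;\colon\iota_{*}.$$
Since the only non-identity morphisms in $\Prj$ are the projections $p^{{\underline s}}_{k}\colon c_{{\underline s}}\to c_{s_{k}}$, for each ${\underline s}=(s_{1},\dots,s_{n})$ the comma category $(c_{{\underline s}}\!\downarrow\!\iota)$ is discrete, with one object $(c_{s_{k}},p^{{\underline s}}_{k})$ for each $k$. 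The pointwise Kan extension formula then gives
$$\iota_{*}W(c_{{\underline s}})\;\cong\;\prod_{k=1}^{n}W(c_{s_{k}}),$$
so $\iota_{*}W$ is in fact a strict (and hence homotopy) $\Prj$-algebra.

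Next, I would verify that for any homotopy $\Prj$-algebra $Z$ the unit $\eta_{Z}\colon Z\to\iota_{*}\iota^{*}Z$ of the adjunction is an objectwise weak equivalence: evaluated at $c_{{\underline s}}$ it is exactly the assembled projection map $Z(c_{{\underline s}})\to\prod_{k}Z(c_{s_{k}})$, which is a weak equivalence by the homotopy $\Prj$-algebra condition applied to the cone $\alpha^{{\underline s}}$. Moreover, since $\iota_{*}$ is right Quillen, if $Z$ is fibrant in $\spaces^{\Prj}_{fib}$ then so is $\iota_{*}\iota^{*}Z$.

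Finally, given $f\colon X\to Y$ with each $f_{c_{s}}$ a weak equivalence, I would first replace $X$ and $Y$ by cofibrant objects of $\spaces^{\Prj}_{fib}$ (these replacements are objectwise weak equivalences, so the hypothesis on sort objects is preserved) and then check, for every homotopy $\Prj$-algebra $Z$ fibrant in $\spaces^{\Prj}_{fib}$, that
$$f^{*}\colon\map_{\Prj}(Y,Z)\to\map_{\Prj}(X,Z)$$
is a weak equivalence of simplicial sets. Because $X,Y$ are cofibrant, $\eta_{Z}$ is an objectwise weak equivalence between fibrant objects, and $\spaces^{\Prj}_{fib}$ is a simplicial model category, the maps $\map_{\Prj}(X,\eta_{Z})$ and $\map_{\Prj}(Y,\eta_{Z})$ are themselves weak equivalences, so it suffices to prove the claim with $\iota_{*}\iota^{*}Z$ in place of $Z$. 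But the adjunction $\iota^{*}\dashv\iota_{*}$ yields
$$\map_{\Prj}(X,\iota_{*}\iota^{*}Z)\;\cong\;\map_{\Prj^{\rm disc}}(\iota^{*}X,\iota^{*}Z)\;=\;\prod_{s\in S}\map(X(c_{s}),Z(c_{s})),$$
and similarly for $Y$, so the induced map becomes $\prod_{s\in S}(f_{c_{s}})^{*}$, a product of weak equivalences, hence itself a weak equivalence. The only substantive bookkeeping lies in the cofibrancy/fibrancy conditions required for the simplicial mapping spaces to be homotopy invariant; once they are in place, the argument is essentially formal.
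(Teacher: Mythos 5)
Your proof is correct and takes the same essential route as the paper: replace $Z$ by the strict $\Prj$-algebra $Z'$ with $Z'(c_{\underline s}) = \prod_k Z(c_{s_k})$ (whose existence the paper simply asserts and which you construct systematically as $\iota_*\iota^*Z$), then observe that $\map_\Prj(-,Z')$ factors as the product over sorts of ordinary mapping spaces. Your version is more explicit about the pointwise Kan extension formula, the discreteness of the comma categories, and the cofibrancy/fibrancy bookkeeping needed for the simplicial mapping spaces to be homotopy invariant, all of which the paper leaves implicit.
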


\begin{proof}
Let $f\colon X \to Y$ be a map such that  $f_{c_{s}}\colon X(c_{s})\ra Y(c_{s})$
is a weak equivalence for all $s\in S$. 
Notice for any homotopy algebra $Z\in \bo {Spaces}^{\Prj}$  we can find a strict 
$\Prj$-algebra $Z'$ such that $Z'$ is fibrant in $\sprf$ and 
there is an objectwise weak equivalence $Z \ra Z'$. It follows that in order to 
show that $f$ is a local equivalence we only need to check that the induced map
of simplicial function complexes 
$$f^{\ast} \colon \map_{\Prj}(X,Z') \lra \map_{\Prj}(Y,Z')$$
is a weak equivalence for all strict $\Prj$-algebras $Z'$ that are fibrant 
in $\spaces^{\Prj}_{fib}$. This fact follows however from the observation that 
since $Z'$ is a strict algebra we have 
$$\map_{\Prj}(X,Z') \cong \prod_{s \in S} \map(X(c_{s}),Z'(c_{s}))$$
and that  under this isomorphism $f_{\ast}$ is given by the product
$\prod_{s\in S} f^{\ast}_{s}$ where
$$f^{\ast}_{s} \colon \map(Y(c_{s}), Z(c_{s})) \to  \map(X(c_{s}), Z(c_{s}))$$
is the map induced by $f_{s}$. 
\qed
\end{proof}

\begin{proposition}
\label{QP FOR PRJ C}
Let $J_\C\colon\Prj\ra\C$ denote the inclusion of $\Prj$
into a free semi-theory $\C$. Then the adjoint pair of functors
$$\xymatrix{
{J_\C}_\ast\colon \spaces^{\Prj}_{cof}\ar@<0.5ex>[r]&
\ \spaces^{\C}_{cof}\colon {J_\C}^\ast \ar@<0.5ex>[l] 
}$$
is a Quillen pair.

\end{proposition}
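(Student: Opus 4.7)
The plan is to verify that the left adjoint ${J_\C}_\ast$ preserves cofibrations and trivial cofibrations. Since the model structures $\spaces^{\Prj}_{cof}$ and $\spaces^{\C}_{cof}$ are both the injective ones, cofibrations and weak equivalences in each are detected objectwise, so it suffices to prove that for every object $c \in \C$ the composite $\operatorname{ev}_c \circ {J_\C}_\ast \colon \spaces^{\Prj} \to \spaces$ preserves objectwise cofibrations and trivial cofibrations of simplicial sets.

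I would compute ${J_\C}_\ast F$ by the pointwise formula for left Kan extension,
$$({J_\C}_\ast F)(c) = \operatorname{colim}_{(a,\phi)\in J_\C \downarrow c} F(a),$$
and then analyze the structure of the comma category $J_\C \downarrow c$. Since the only non-identity morphisms in $\Prj$ are the single projections $p_k^{\underline s}\colon c_{\underline s} \to c_{s_k}$, the non-identity arrows in $J_\C \downarrow c$ go exclusively from tuple-objects $(c_{\underline s}, \phi)$ with $|\underline s|\geq 2$ to single-sort objects $(c_{s_k}, \psi)$, and exist only when $\phi$ factors in $\C$ as $\phi = \psi \circ p_k^{\underline s}$. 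Using the tree description of morphisms in the free semi-theory $\C$ from Section \ref{COMPLETION}, such a factorization forces every initial edge of every tree making up $\phi$ to carry the single label $p_k^{\underline s}$, so each tuple-object has at most one non-identity outgoing morphism in $J_\C \downarrow c$.

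Consequently each connected component of $J_\C \downarrow c$ is either a single isolated tuple-object (whose trees use at least two distinct input projections) or a ``star'' whose center is a single-sort object $(c_{s_k},\psi)$ and whose leaves are all tuple-objects factoring through it. In either case the component has a terminal object, and the colimit over the component reduces to $F$ evaluated at the underlying object of that terminal. Collecting over components,
$$({J_\C}_\ast F)(c) \cong \coprod_{\lambda} F(a_\lambda).$$
Since coproducts in $\spaces$ preserve both cofibrations and weak equivalences, $\operatorname{ev}_c \circ {J_\C}_\ast$ preserves objectwise (trivial) cofibrations, establishing the Quillen pair.

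The main obstacle is the combinatorial claim that every morphism $\phi\colon c_{\underline s}\to c$ in the free semi-theory $\C$ factors through at most one of the projections $p_k^{\underline s}$. This reduces to an inspection of the initial edges of the trees representing $\phi$: each tree must have at least one initial edge, and a factorization through $p_k^{\underline s}$ forces every initial edge to carry that specific label, so two distinct projections cannot simultaneously accommodate $\phi$. This is the only substantive combinatorial input beyond the standard Kan extension formalism.
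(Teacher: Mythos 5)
Your proof is correct and arrives at the same key fact the paper uses: for each $c\in\C$ the object ${J_\C}_\ast Y(c)$ is a coproduct of values of $Y$, so ${J_\C}_\ast$ preserves objectwise cofibrations and weak equivalences. The paper simply writes down the resulting coproduct formula (indexed by morphisms into $c$ whose first generator is not a projection, together with the identity) and asserts it is the left adjoint, whereas you derive the same decomposition from the pointwise left Kan extension formula and the structure of the comma category $J_\C\downarrow c$ — the two are the same argument in different clothing. One small slip: the tree description in Section \ref{COMPLETION} is for morphisms of the completion $\CC$, not of the free semi-theory $\C$ itself; a morphism of $\C$ is just a word $\zeta_k\circ\cdots\circ\zeta_1$ in the free generators (a linear chain, not a branching tree), and the fact you need — that $\phi$ factors through at most one projection $p_k^{\underline s}$ — follows directly from the uniqueness of this word decomposition in a free category, not from any tree combinatorics. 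This does not affect the validity of your argument, but the parenthetical about ``trees using at least two distinct input projections'' is the wrong characterization of isolated objects; the correct one is simply that the first letter of $\phi$ is not a projection (or $\phi$ is the identity).
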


\begin{proof}
It can be checked that the functor ${J_\C}_\ast\colon\spaces^{\Prj}\ra\spaces^{\C}$ is left adjoint to $J_{\C^\ast}$ if we define the functor ${J_\C}_\ast$
as follows.  For objects $Y\in\spaces^{\Prj}$ let
$$ {J_\C}_\ast Y (c) = Y(c) \sqcup \coprod_{(\varphi \colon c_i \ra c) 
\in G_{c}} Y(c_i)$$
where
$c\in\C$, $G_{c}$ is the set of all morphisms 
$\varphi\colon c_i\ra c$, $c_i \in \C$ such that 
$\varphi = \zeta_k\circ\zeta_{k-1}\circ\dots\circ\zeta_1$ 
with $\zeta_1, \dots, \zeta_k $- generators of $\C$ and 
$\zeta_1$ is not a projection.
On generating morphisms (which are not projections) $\varphi \colon c\ra d$ in $\C$ we let $J_{\C_\ast}X(\varphi)$ be given by sending the component $X(c_i)$ of $J_{\C_{\ast}}X(c)$ corresponding to $\psi \colon c_i \ra c$ via the identity to the component $X(c_i)$ of $J_{\C_{\ast}}X(d)$ corresponding to $\varphi \circ \psi \colon c_i \ra d$ and sending $X(c)$ to the component $X(c)$ of $J_{\C_{\ast}}X(d)$ corresponding to $\varphi.$  For projections we do the same with the components coming from maps in $G_c$, but there is no component $X(c)$ of $J_{\C_{\ast}}X(d)$, so we instead send $X(c)$ to $X(d)$ via projection. 
Furthermore, from this description it is clear that ${J_\C}_\ast$ preserves 
objectwise cofibrations and weak equivalences. Therefore 
$({J_\C}_\ast, {J_\C}^\ast)$ is a Quillen pair.
\qed
\end{proof}

\begin{corollary}
\label{QP FOR PRJ FC}
The map of semi-theories $J\colon\Prj\ra\FC$ induces 
Quillen pair of functors
$$\xymatrix{
J_\ast\colon \spaces^{\Prj}_{cof}\ar@<0.5ex>[r]&
\ \spaces^{\FC}_{cof}\colon J^\ast \ar@<0.5ex>[l] 
}.$$

\end{corollary}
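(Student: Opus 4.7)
The plan is to lift Proposition \ref{QP FOR PRJ C} along the simplicial direction of the resolution. Recall that $\FC = \bo F_\ast\C$ is a simplicial category whose value in each simplicial degree $k$ is the free category $\bo F_k\C$. The projections of $\C$ remain among the free generators of $\bo F_k\C$ for every $k$, so each level $\bo F_k\C$ inherits the structure of a free $S$-sorted semi-theory, and the inclusion $J\colon\Prj\to\FC$ factors through an inclusion $J_k\colon \Prj\to \bo F_k\C$ at each simplicial level.

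First I would apply Proposition \ref{QP FOR PRJ C} level-wise. For each $k$ this yields an explicit left adjoint $(J_k)_\ast$ to the restriction $J_k^\ast$, given by the same formula $(J_k)_\ast Y(c)=Y(c)\sqcup\coprod_{\varphi\in G_c^{(k)}}Y(c_i)$ that appeared in the proof of Proposition \ref{QP FOR PRJ C}. The formula is functorial in the underlying free semi-theory, so it makes sense to ask how the family $\{(J_k)_\ast\}_k$ is acted upon by the face and degeneracy functors of $\FC$. Since those face and degeneracy functors are identities on objects and send projections to projections and generators to (compositions of) generators, they preserve the defining property of the set $G_c^{(k)}$ (namely, that a morphism factors with a non-projection as its first generator). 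Hence the level-wise functors $(J_k)_\ast$ assemble into a single functor $J_\ast\colon\spaces^{\Prj}\to\spaces^{\FC}$ that is left adjoint to the restriction $J^\ast$.

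Second, once $J_\ast$ is in hand, the Quillen pair property is nearly automatic. Both $\spaces^{\Prj}_{cof}$ and $\spaces^{\FC}_{cof}$ detect cofibrations and weak equivalences objectwise, and at each object $c\in\FC$ the value $J_\ast Y(c)$ is a disjoint union of spaces $Y(c_i)$. Since coproducts in $\spaces$ preserve objectwise cofibrations and objectwise weak equivalences, $J_\ast$ preserves them too, so $(J_\ast,J^\ast)$ is a Quillen pair between $\spaces^{\Prj}_{cof}$ and $\spaces^{\FC}_{cof}$.

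The main obstacle is the second paragraph: verifying that the explicit level-wise formula of Proposition \ref{QP FOR PRJ C} is genuinely natural with respect to the face and degeneracy functors of $\FC$, so that the $(J_k)_\ast$'s combine into a simplicial functor rather than merely a compatible family of ordinary functors. Once this bookkeeping with the sets $G_c^{(k)}$ is carried out, the remaining verifications are immediate consequences of what was already proved in Proposition \ref{QP FOR PRJ C}.
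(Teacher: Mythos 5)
Your overall strategy is the same one the paper uses: apply Proposition~\ref{QP FOR PRJ C} in each simplicial degree $k$ of the resolution and then try to put the resulting left adjoints $(J_k)_\ast$ together. But the proposal glosses over the step that the paper singles out as the whole point, namely that the $(J_k)_\ast$ do \emph{not} directly assemble into a functor landing in $\spaces^{\FC}$; they assemble into a functor $J_\bullet$ valued in \emph{bisimplicial} data, and the paper defines $J_\ast := |-|\circ J_\bullet$ where $|-|$ is the diagonal. This is not cosmetic. For $Y\in\spaces^{\Prj}$ each $(J_k)_\ast Y(c)$ is itself a simplicial set, while level $k$ of a simplicial $\FC$-diagram is a $\bo F_k\C$-diagram of \emph{sets}. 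Writing $(J_\ast Y)(c)_n = \bigl((J_n)_\ast Y(c)\bigr)_n$, i.e.\ taking the diagonal, is the way to turn the family into a single object of $\spaces^{\FC}$, and you never say this. The compatibility of the $G_c^{(k)}$ under faces and degeneracies is necessary but not sufficient: it makes $J_\bullet Y(c)$ bisimplicial, after which the diagonal is still required.

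Consequently the sentence ``at each object $c\in\FC$ the value $J_\ast Y(c)$ is a disjoint union of spaces $Y(c_i)$'' is false. The indexing set $G_c^{(n)}$ grows with the simplicial degree $n$, so after taking the diagonal the level-$n$ part of $J_\ast Y(c)$ is $Y(c)_n\sqcup\coprod_{\varphi\in G_c^{(n)}}Y(c_\varphi)_n$, which is not the $n$-simplices of any fixed coproduct of the $Y(c_i)$. At best $J_\ast Y(c)$ decomposes as $Y(c)\sqcup\coprod_{c'}G_{c,c'}\times Y(c')$ with $G_{c,c'}$ a genuinely nonconstant simplicial set, so the preservation of objectwise weak equivalences needs either that description plus the fact that all simplicial sets are cofibrant, or the standard fact that a levelwise weak equivalence of bisimplicial sets induces a weak equivalence on diagonals. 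Your appeal to ``coproducts preserve weak equivalences'' does not cover this. In short: the levelwise reduction to Proposition~\ref{QP FOR PRJ C} is the right move, but the assembly step, which is exactly where the content of this corollary lies and where the paper invokes Badzioch's construction, is missing.
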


\begin{proof}
This follows by an argument paralleling the proof of  \cite[5.2]{badziochIII}.
The key idea is that the functors $(J_{k})_{\ast}$ (coming from Proposition \ref{QP FOR PRJ C} with  
$\C := F_{k}\C$) can be assembled into a functor $J_{\bullet}$ for which we have
 $J_{\ast}=|-|\circ J_{\bullet}$ where $|-|$ is the diagonal functor $|-|\colon s\spaces \ra \spaces$. 
\qed
 \end{proof}

%
%

\section{\bf Proof of Lemma \ref{FC UNIT}.}
\label{PF LEMMA}

Since the combinatorial description of the algebraic completion of a free multi-sorted semi-theory
uses a similar setup as the algebraic completion of free one-sorted 
 semi-theory described in \cite{badziochIII}, the remainder of the proofs
will be also similar.  For this reason  we will outline the proof of Lemma \ref{FC UNIT}, 
but refer  to \cite[\S 7]{badziochIII} for details.

Let $\D$ be a free $S$-sorted semi-theory and $\Phi_{\D}\colon\D\lra\bar{\D}$
be the completion of $\D$ to an algebraic theory. As usual we will denote objects 
of $\D$ and  $\bar{\D}$ by $d_{\underline s}$ where $\underline s$ is an $n$-tuple 
in $S$ for $n\geq 0$, and by $\D_{\underline s}$ (resp. $\bar{\D}_{\underline s}$) we will 
denote the 
$\D$-diagram (resp. the $\bar \D$-diagram) corepresented by $d_{\underline s}$. Using
the functor $\Phi_{\D}$ we can think of $\bar{\D}_{\underline s}$ as  a $\D$-diagram. 
Since $\Phi_{\D}$ is an embedding of categories $\D_{\underline s}$ is a subdiagram 
of $\bar{\D}_{\underline s}$. 

Define a filtration of the diagram $\bar{\D}_{\underline s}$ by $\D$--diagrams
$$\bardan^0\subseteq \bardan^1\subseteq\dots\subseteq\bardan$$
as follows. Set $\bardan^0 := \D_{\underline s}$. For $k\geq 0$
we define  $\bardan^{k+1}$ as  the smallest $\D$--subdiagram of $\bardan$
such that if $T_{1},T_{2},\dots,T_{m}$ are trees that are elements of 
$\bardan^k$ then 
$(T_{1},T_{2},\dots,T_{m})$ belongs to $\bardan^{k+1}$. 
From the combinatorial construction  of $\bar{\D}$ (Section \ref{COMPLETION})
we obtain that  $\colim[k]\bardan^k = \bardan$. 

As before let  $\Prj$ denote the initial $S$-sorted semi-theory. 
The unique map $\Prj\ra\D$ induces a $\Prj$-diagram structure 
on $\D_{\underline s}$, $\bardan$ and $\bardan^k$. 
We define a filtration of 
$\bardan$ by $\Prj$--diagrams 
$$s\bardan^0\subseteq s\bardan^1\subseteq\dots\subseteq\bardan,$$
\noindent where $s\D_{\underline s}^0 = \D_{\underline s}$ and $s\bardan^{k+1}$ is the smallest 
$\Prj$--subdiagram of $\bardan$ such that if 
$T_{1},T_{2},\dots,T_{m}$ are elements of $\bardan^k$  then 
$(T_{1},T_{2},\dots,T_{m})$ belongs to
 $s\bardan^{k+1}$. 
We have inclusions of $\Prj$--diagrams
$$\bardan^k\subseteq s\bardan^{k+1}\subseteq \bardan^{k+1}$$  
and  $\colim[k] s\bardan = \bardan$. 

Using the same tree-length arguments as  in \cite{badziochIII} we can check that the filtrations 
$\{\bardan^k\}$ and $\{s\bardan^k\}$ have the following
property.
 
\begin{lemma}
\label{PULLBACK LEMMA}
For any $\D$--diagram of spaces $X\colon\D\lra\spaces$
and for $k\geq 0$ the square of simplicial mapping complexes
$$\xymatrix{
\map_{\D}(\bardan^k,X)\ar[d] & \map_{\D}(\bardan^{k+1}, X)\ar[l]\ar[d] \\
\map_{\Prj}(s\bardan^k, X)     & \map_{\Prj}(s\bardan^{k+1}, X)\ar[l] \\
}$$ 
is a pull-back diagram.
\end{lemma}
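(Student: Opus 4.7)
The plan is to adapt the tree-length induction strategy of \cite[\S7]{badziochIII} to the multi-sorted setting. First, by the simplicial enrichment and the natural isomorphism $\map_{\D}(Y, X)_{n} \cong \Hom_{\spaces^{\D}}(Y, X^{\Delta[n]})$, it suffices to verify the pullback on $0$-simplices for an arbitrary target $X$. I would then reformulate this set-level claim as a pushout statement in $\spaces^{\D}$. Let $F \colon \spaces^{\Prj} \to \spaces^{\D}$ denote the left adjoint to the restriction functor along $\Prj \hookrightarrow \D$. The inclusions $s\bardan^{k} \hookrightarrow s\bardan^{k+1}$ and $s\bardan^{k} \hookrightarrow \bardan^{k}$ (both as $\Prj$-diagrams) adjoint to morphisms of $\D$-diagrams, yielding a commutative square
\begin{equation*}
\begin{tikzpicture}[baseline=(current bounding box.center)]
\matrix (m)
[matrix of math nodes, row sep=3em, column sep=3em, text height=1.5ex, text depth=0.25ex]
{
F(s\bardan^{k}) & F(s\bardan^{k+1}) \\
\bardan^{k}     & \bardan^{k+1}     \\
};
\path[->, thick, font=\scriptsize]
(m-1-1) edge (m-1-2) edge (m-2-1)
(m-1-2) edge (m-2-2)
(m-2-1) edge (m-2-2)
;
\end{tikzpicture}
\end{equation*}
in $\spaces^{\D}$. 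Applying $\map_{\D}(-, X)$ and using the adjunction $\map_{\D}(F Y, X) \cong \map_{\Prj}(Y, X)$ converts this into the square from the statement. Hence the lemma is equivalent to asserting that the displayed square is a pushout.

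Next, I would verify the pushout property pointwise. At each object $d_{\underline t} \in \D$ the four spaces are sets of trees (resp.\ tuples of trees), and the claim reduces to showing that every tree in $\bardan^{k+1}(\underline t)$ either lies in $\bardan^{k}(\underline t)$ or is uniquely obtained by applying a sequence of non-projection generators of $\D$ to a tuple $(T_1, \dots, T_m) \in s\bardan^{k+1}$ with $T_{i} \in \bardan^{k}$; and moreover that any two such presentations are identified via either the left edge (when both sides project down into $\bardan^{k}$) or the top edge (via the $\Prj$-structure of $s\bardan^{k+1}$). This is established by induction on tree-length: the combinatorial description of $\bar{\D}$ from Section \ref{COMPLETION} shows that a tree of length $\ell$ in $\bardan^{k+1}$ not in $\bardan^{k}$ either projects onto a component $T_{i}$ (which lies in $\bardan^{k}$) or is built from a tuple at its lowest vertex by a generator of length less than $\ell$, allowing the induction to proceed.

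The main obstacle will be the uniqueness half of the combinatorial analysis: showing that two different expressions for the same tree as $\varphi \cdot (T_1, \dots, T_m)$ are already forced to agree by identifications captured either in $\bardan^{k}$ or in the $\Prj$-structure of $s\bardan^{k+1}$. This amounts to tracing through how projections and non-projection generators interact under the grafting-composition in $\bar{\D}$, and is the step that genuinely requires the multi-sorted generalization of Badzioch's argument. Sort information must be carried throughout, but the underlying inductive scheme on tree-length works without essential change, so once this combinatorial verification is in place the pullback of mapping complexes follows formally from the adjunction $(F, \text{restriction})$.
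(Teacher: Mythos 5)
Your adjunction reformulation is valid: the isomorphism $\map_{\D}(FY, X) \cong \map_{\Prj}(Y, X)$ together with the continuity of $\map_{\D}(-, X)$ does reduce the lemma to the displayed square being a pushout in $\spaces^{\D}$. The next step, however, is where the plan goes astray. You assert that ``at each object $d_{\underline t} \in \D$ the four spaces are sets of trees,'' but $F(s\bardan^{k})(d_{\underline t})$ and $F(s\bardan^{k+1})(d_{\underline t})$ are values of a left Kan extension, i.e.\ colimits indexed over the comma category $\Prj/d_{\underline t}$, and they are \emph{not} subsets of $\bardan(d_{\underline t})$. Checking the pointwise pushout therefore requires first unwinding these colimits, and your sketch does not do this; as written, the ``reduction'' to the tree statement is unjustified.

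The combinatorial condition you then state (every tuple of trees in $\bardan^{k+1}(d_{\underline t})$ lies in $\bardan^{k}$ or arises uniquely from $s\bardan^{k+1}$, with any two presentations identified through $\bardan^{k}$ or through the $\Prj$-structure of $s\bardan^{k+1}$) is in fact a direct unwinding of the \emph{pullback}, not of the pointwise pushout: it says precisely that a map $h \colon \bardan^{k+1} \to X$ of $\D$-diagrams is in bijection with compatible pairs $(f \colon \bardan^{k} \to X,\ g \colon s\bardan^{k+1} \to X)$ agreeing on $s\bardan^{k}$. Verifying the pullback at that level --- on $0$-simplices against $X^{\Delta[n]}$, as you propose --- involves no Kan extension at all and is, modulo carrying the sorts, exactly what the paper does by citing the tree-length arguments of \cite[\S 7]{badziochIII}. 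So the right move is to drop the pushout detour and establish the extension-and-uniqueness statement for natural transformations directly; the combinatorial uniqueness step you flag as the ``main obstacle'' is indeed the content, and the paper defers it to Badzioch rather than reproducing it.
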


Let $\C$ be an $S$-sorted semi-theory, and 
let $\FC$ be the simplicial resolution of $\C$. 
Consider $\bo F_m\C$,  the free multi-sorted semi-theory in the $m$-th 
simplicial dimension of $\FC$, and let $\overline{\bo F_m\C}$ 
be the completion of  $\bo F_m\C$  to an $S$-sorted algebraic theory. 
Setting $\D := \bo F_m\C$ above 
we see that the $\bo F_m\C$-diagram $\overline{\bo F_m\C}_{{\underline s}}$ 
(where $\overline{\bo F_m\C}_{{\underline s}}({\underline s'})=\Hom_{\overline{\bo F_m\C}}
(c_{\underline s},c_{\underline s'})$) 
admits two filtrations
by $\bo F_m\C$-diagrams:
$$ \bo F_m\C_{{\underline s}} = \overline{\bo F_m\C}_{{\underline s}}^0
\subseteq 
\overline{\bo F_m\C}_{{\underline s}}^1
\subseteq \dots\subseteq 
\overline{\bo F_m\C}_{{\underline s}}$$
and by $\Prj$--diagrams:
$$
\bo F_m\C_{{\underline s}} = 
s\overline{\bo F_m\C}_{{\underline s}}^0
\subseteq 
s\overline{\bo F_m\C}_{{\underline s}}^1
\subseteq\dots\subseteq 
\overline{\bo F_m\C}_{{\underline s}}.$$
The first of these filtrations, gives a filtration of the diagram 
$\overline{\FC}_{{\underline s}}$ by $\FC$--diagrams 
$$\FC_{{\underline s}} = \overline{\FC}_{{\underline s}}^0\subseteq \overline{\FC}_{{\underline s}}^1
\subseteq\dots\subseteq \overline{\FC}_{{\underline s}}.$$
Similarly, the filtrations of $\overline{\bo F_m \C}_{{\underline s}}$ by $\Prj$--diagrams
$s\overline{\bo F_m\C}_{{\underline s}}^k$ for $m \geq 0$ give a filtration of 
$\overline{\FC}_{{\underline s}}$ by $\Prj$-diagrams
$$\FC_{{\underline s}} = s\overline{\FC}_{{\underline s}}^0\subseteq s\overline{\FC}_{{\underline s}}^1
\subseteq\dots\subseteq \overline{\FC}_{{\underline s}}.$$

We have the following Lemma.

\begin{lemma}
\label{DIAG}
For $X\in\spaces^{\FC}$ consider the following diagrams of 
simplicial function complexes
$$\xymatrix{
\map_{\FC}(\overline{\FC}_{{\underline s}}^k,X)\ar[d] & 
\map_{\FC}(\overline{\FC}_{{\underline s}}^{k+1}, X)\ar[l]_f\ar[d] \\
\map_{\Prj}(s\overline{\FC}_{{\underline s}}^k, X)  & 
\map_{\Prj}(s\overline{\FC}_{{\underline s}}^{k+1}, X)\ar[l]_g \\
}$$ 
This is a pullback diagram for all $X$, and $k\geq 0$ and $\alpha \in \tau.$
\end{lemma}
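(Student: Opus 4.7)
The plan is to reduce Lemma \ref{DIAG} to its non-simplicial counterpart, Lemma \ref{PULLBACK LEMMA}, by working one simplicial degree at a time in $\FC$.

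First, I would observe that both filtrations under consideration are defined levelwise in the simplicial direction of $\FC$: in simplicial dimension $m$ we have, by construction,
$$(\overline{\FC}_{\underline s}^k)_m = \overline{\bo F_m\C}_{\underline s}^k
\qquad\text{and}\qquad
(s\overline{\FC}_{\underline s}^k)_m = s\overline{\bo F_m\C}_{\underline s}^k.$$
That is, the filtrations of $\overline{\FC}_{\underline s}$ are obtained by applying, in each simplicial dimension $m$, the filtrations of $\overline{\bo F_m\C}_{\underline s}$ used in Lemma \ref{PULLBACK LEMMA} with $\D:=\bo F_m\C$.

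Next, I would unwind the simplicial mapping complex over the simplicial category $\FC$ to show that in simplicial dimension $m$ each of the four simplicial mapping complexes in the statement reduces to the corresponding mapping complex over the underlying discrete free semi-theory $\bo F_m\C$, computed with respect to the restriction $X_m$ of $X$ along the inclusion $\bo F_m\C\hookrightarrow\FC$. Concretely, one obtains natural isomorphisms
$$\map_{\FC}(\overline{\FC}_{\underline s}^k, X)_m
\;\cong\; \map_{\bo F_m\C}(\overline{\bo F_m\C}_{\underline s}^k, X_m),$$
and similarly for the three remaining entries of the square. This uses the degreewise description of the filtrations together with the end formula for the simplicial hom.

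Then, for each fixed $m\geq 0$, applying Lemma \ref{PULLBACK LEMMA} to the free $S$-sorted semi-theory $\D:=\bo F_m\C$ (together with the canonical map $\Prj\to\bo F_m\C$) shows that the corresponding square of mapping complexes is a pullback. Combining this with the degreewise identifications of the previous step produces a pullback square in each dimension $m$. Since pullbacks of simplicial sets are computed degreewise, the outer square is itself a pullback in $\spaces$, which is exactly the assertion of Lemma \ref{DIAG}. The main obstacle I anticipate is the verification in the second step: one must check that the simplicial enrichment on $\FC$ imposes no extra compatibility conditions beyond those already captured by the restriction to each discrete $\bo F_m\C$, and that the $\FC$-diagram structure on $\overline{\FC}_{\underline s}^k$ is compatible with this decomposition. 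Once this bookkeeping is settled, the result follows formally from Lemma \ref{PULLBACK LEMMA} and the degreewise computation of pullbacks.
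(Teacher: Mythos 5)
Your proposal takes essentially the same route as the paper. The paper's proof is a one-liner citing Lemma~\ref{PULLBACK LEMMA} together with \cite[6.1]{badziochIII}, and that latter reference is precisely the levelwise reduction you sketch: the filtrations of $\overline{\FC}_{\underline s}$ are defined in each external simplicial degree $m$ by the filtrations of $\overline{\bo F_m\C}_{\underline s}$, the mapping complexes over the simplicial category $\FC$ are identified degreewise with the corresponding (discrete) mapping data over $\bo F_m\C$, Lemma~\ref{PULLBACK LEMMA} (with $\D := \bo F_m\C$) is applied in each degree, and the conclusion follows because pullbacks of simplicial sets are computed degreewise.

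One small caution about the step you yourself flag as the main obstacle: as written, the claimed identification
$\map_{\FC}(\overline{\FC}_{\underline s}^k, X)_m \cong \map_{\bo F_m\C}(\overline{\bo F_m\C}_{\underline s}^k, X_m)$
has a type mismatch (the left side is a set of $m$-simplices, the right side is a simplicial set), and there is no inclusion $\bo F_m\C\hookrightarrow\FC$ of categories to restrict along --- $\bo F_m\C$ is the $m$-th level of the simplicial category $\FC$, not a subcategory. The correct levelwise statement involves the functor $X^{(m)}\colon \bo F_m\C\to\Set$, $X^{(m)}(c)=X(c)_m$, obtained from the shear of the bisimplicial structure, and identifies the $m$-simplices of the mapping complex over $\FC$ with the set of natural transformations over $\bo F_m\C$. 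This is exactly the content supplied by \cite[6.1]{badziochIII}, so with that precise formulation in place your argument goes through.
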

 
\begin{proof}
This follows directly from Lemma \ref{PULLBACK LEMMA} and 	
\cite[6.1]{badziochIII}.
\qed 
\end{proof}

Next we wish to show that the map $g$ from 
Lemma \ref{DIAG} satisfies the following. 

\begin{lemma}
\label{LOWER MAP}
Let $X$ be a homotopy algebra fibrant in $\spaces_{cof}^{\FC}$.
For every $k\geq0$
the map 
$$g\colon \map_{\Prj}(s\overline{FC}_{{\underline s}}^{k+1}, X)\lra
\map_{\Prj}(\overline{FC}_{{\underline s}}^k, X)$$
induced by an inclusion 
$\iota_k\colon\overline{FC}_{{\underline s}}^k\hookrightarrow s\overline{FC}_{{\underline s}}^{k+1}$
is an acyclic fibration of simplicial sets.
\end{lemma}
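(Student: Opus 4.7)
The plan is to realize $\iota_k$ as a pushout in $\spaces^{\Prj}$ built out of the localizing maps of $\bo L \spaces^{\Prj}$, and then use the fibrancy of $X$ in $\spaces^{\FC}_{cof}$ together with its homotopy algebra property to conclude. The argument parallels its single-sorted analogue in \cite[\S 7]{badziochIII}.

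First I would analyze the combinatorial structure of the inclusion $\iota_k$. By construction of the filtration, the elements of $s\overline{\FC}_{{\underline s}}^{k+1}$ that are not already in $\overline{\FC}_{{\underline s}}^{k}$ are precisely the tuples $\lambda = (T_{1}, \dots, T_{m(\lambda)})$ sitting at some $d_{\underline s'(\lambda)}$, where $\underline s'(\lambda) = (s'_{1}(\lambda), \dots, s'_{m(\lambda)}(\lambda))$ and $T_{i} \in \overline{\FC}_{{\underline s}}^{k}(d_{s'_{i}(\lambda)})$ for each $i$. Let $\Lambda$ denote the set of these new tuples.

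Next, I would exhibit $s\overline{\FC}_{{\underline s}}^{k+1}$ as the pushout in $\spaces^{\Prj}$ of the span
$$\coprod_{\lambda \in \Lambda}\Prj_{\underline s'(\lambda)} \;\longleftarrow\; \coprod_{\lambda \in \Lambda}\coprod_{i=1}^{m(\lambda)} \Prj_{s'_{i}(\lambda)} \;\longrightarrow\; \overline{\FC}_{{\underline s}}^{k},$$
where the left arrow is the coproduct of the maps $\coprod_{i}\Prj_{s'_{i}(\lambda)} \to \Prj_{\underline s'(\lambda)}$ induced by the projections $p_{i}^{\underline s'(\lambda)}$, and the right arrow picks out $T_{i}(\lambda)$ on each summand via the Yoneda lemma. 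This identification follows from the universal property of $s\overline{\FC}_{{\underline s}}^{k+1}$ as the smallest $\Prj$-subdiagram of $\overline{\FC}_{{\underline s}}$ containing $\overline{\FC}_{{\underline s}}^{k}$ together with all such tuples.

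Applying $\map_{\Prj}(-, X)$ converts this pushout into a pullback of simplicial sets, exhibiting $g$ as the pullback of the map
$$\prod_{\lambda \in \Lambda}\map_{\Prj}\bigl(\Prj_{\underline s'(\lambda)}, X\bigr) \;\longrightarrow\; \prod_{\lambda \in \Lambda}\map_{\Prj}\Bigl(\coprod_{i=1}^{m(\lambda)} \Prj_{s'_{i}(\lambda)}, X\Bigr).$$
Since acyclic fibrations are stable under pullback, it suffices to show that this map is an acyclic fibration of simplicial sets. By the Yoneda lemma combined with the adjunction of Corollary \ref{QP FOR PRJ FC}, this map is identified with the product over $\lambda \in \Lambda$ of the product-cone maps $X(d_{\underline s'(\lambda)}) \to \prod_{i=1}^{m(\lambda)} X(d_{s'_{i}(\lambda)})$. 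Each such map is a weak equivalence because $X$ is a homotopy $\FC$-algebra, and a fibration of simplicial sets because each map $\coprod_{i} \Prj_{s'_{i}} \to \Prj_{\underline s'}$ is an objectwise injection of discrete $\Prj$-diagrams, hence a cofibration in the simplicial model category $\sprc$, while $J^{\ast}X$ is fibrant in $\sprc$ as $J^{\ast}$ is a right Quillen functor by Corollary \ref{QP FOR PRJ FC}. Thus each product-cone map is an acyclic fibration of simplicial sets, and so is their product.

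The main obstacle is the combinatorial verification of the pushout: one must be careful about potential identifications among tuples arising from the action of $\overline{\bo F_{m}\C}$ at each simplicial level, and must check that the $\Prj$-action on the newly attached summands $\Prj_{\underline s'(\lambda)}$ recovers exactly the tuple projection structure present in $s\overline{\FC}_{{\underline s}}^{k+1}$. The tree-length analysis of \cite[\S 7]{badziochIII} extends directly to handle these points in the multi-sorted setting.
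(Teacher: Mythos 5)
Your approach diverges from the paper's, and the pushout decomposition at its core has a genuine gap. You propose to realize $\iota_k$ as a pushout in $\spaces^{\Prj}$ along the localizing maps, attaching over a set $\Lambda$ of ``new tuples.'' The difficulty is that $\Lambda$ does not naturally carry the structure of a simplicial set: while degeneracy images of new tuples remain new (otherwise a face would return them to $\overline{\FC}^{k}_{\underline s}$), a face operator $d_j\colon \overline{\bo F_m\C}\to\overline{\bo F_{m-1}\C}$ can send a tuple $(T_1,\dots,T_{m'})$ not lying in $\overline{\bo F_m\C}^{k}_{\underline s}$ to one already lying in $\overline{\bo F_{m-1}\C}^{k}_{\underline s}$, so the new tuples need not form a simplicial subobject of $s\overline{\FC}^{k+1}_{\underline s}$. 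Even setting this aside, the span you write down, $\coprod_\lambda \Prj_{\underline s'(\lambda)}\leftarrow\coprod_\lambda\coprod_i \Prj_{s'_i(\lambda)}\to\overline{\FC}^{k}_{\underline s}$, is built from discrete (constant) corepresented $\Prj$-diagrams indexed by a bare set, and its pushout in $\spaces^{\Prj}$ adjoins constant simplicial sets, which cannot yield the genuinely simplicial $\Prj$-diagram $s\overline{\FC}^{k+1}_{\underline s}$. This is not a tree-length technicality absorbed by \cite[\S 7]{badziochIII}; it is a structural obstruction coming from the simplicial direction of the resolution $\FC$ that your span does not account for.

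The paper's argument sidesteps any cell decomposition. The fibration half is immediate: $\iota_k$ is an objectwise monomorphism, hence a cofibration in $\spaces^{\Prj}_{cof}$, and $J^\ast X$ is fibrant in $\spaces^{\Prj}_{cof}$ by Corollary \ref{QP FOR PRJ FC}, so $g=\map_{\Prj}(\iota_k,J^\ast X)$ is a Kan fibration by the simplicial model category axiom. For the weak-equivalence half, the key observation --- which also underlies your decomposition, but is easiest to exploit directly --- is that $\iota_k$ restricts to an \emph{isomorphism} on every sort object $c_s$, $s\in S$, since new tuples are only adjoined at objects $c_{\underline s'}$ of arity at least two. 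Proposition \ref{LOC WE IN PRJ} then says that $\iota_k$ is a local equivalence in ${\bo L}\spaces^{\Prj}$, and mapping into the fibrant homotopy $\Prj$-algebra $J^\ast X$ gives that $g$ is a weak equivalence of simplicial sets. If you wanted to salvage a cell-attachment picture you would have to work relative to the simplicial degree (e.g.\ with latching objects), but the observation about sorts makes this entirely unnecessary.
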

 
\begin{proof}
 Since $\iota_k$ is a cofibration in $\spaces_{cof}^{\FC}$
we get that $g$ is a fibration.  It remains to show that $g$ 
is also a weak equivalence 
of simplicial sets. 

By Corollary \ref{QP FOR PRJ FC}, if $X$ is a homotopy $\FC$-algebra 
fibrant in $\spaces_{cof}^{\FC}$ then it is also a 
homotopy $\Prj$-algebra which is fibrant in $\spaces_{cof}^\Prj$. 
Therefore we need only show that the map $\iota_k$
is a local equivalence in $\spaces^\Prj$, but this is a result of 
Theorem \ref{LOC WE IN PRJ} and the fact that
$\iota_k$ restricts to an isomorphism of simplicial sets
$$\overline{\FC}_{{\underline s}}^k(s)\mapright{25}{\cong} 
s\overline{\FC}_{{\underline s}}^{k+1}(s)$$
for all $s \in S.$

\qed
\end{proof}

Next, consider the upper map $f$ in the diagram in Lemma \ref{DIAG}. 
From Lemma \ref{LOWER MAP} we have that $g$ is an acyclic fibration and
from Lemma \ref{DIAG} $f$ is the base change of $g$ along 
$$\map_{\FC}(\overline{\FC}_{{\underline s}}^k,X) \lra \map_{\Prj}(\overline{\FC}_{{\underline s}}^k,X)$$
so by \cite[3.14]{dwyerspalinski} we obtain.

\begin{corollary}
\label{AC FIB}
Let $X$ be a homotopy algebra, fibrant in $\spaces_{cof}^{\FC}$.
For all $k\geq 0$ the map 
$$f\colon\map_{\FC}(\overline{\FC}_{{\underline s}}^{k+1}, X)\lra
\map_{\FC}(\overline{\FC}_{{\underline s}}^k,X)$$
is an acyclic fibration of simplicial sets.
\end{corollary}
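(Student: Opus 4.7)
The plan is to deduce this Corollary directly from the two preceding Lemmas, using the fact that acyclic fibrations of simplicial sets are stable under base change.

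First I would observe that, by Lemma \ref{DIAG}, the square appearing there is a pullback of simplicial sets, with top horizontal arrow $f$ and bottom horizontal arrow $g$. Consequently $f$ is exhibited as the pullback of $g$ along the left vertical map $\map_{\FC}(\overline{\FC}_{{\underline s}}^k, X) \lra \map_{\Prj}(s\overline{\FC}_{{\underline s}}^k, X)$.

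Next, since $X$ is by hypothesis a homotopy $\FC$-algebra that is fibrant in $\spaces_{cof}^{\FC}$---precisely the assumption needed---Lemma \ref{LOWER MAP} guarantees that $g$ is an acyclic fibration of simplicial sets.

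Finally, acyclic fibrations are stable under base change in any model category, and in particular in $\spaces$; this is the content of \cite[3.14]{dwyerspalinski}. Hence the pullback $f$ of the acyclic fibration $g$ is itself an acyclic fibration, completing the argument. There is essentially no obstacle: the technical work has already been carried out in Lemmas \ref{DIAG} and \ref{LOWER MAP}, and the Corollary is a formal two-step consequence combining the pullback identification with the model-categorical stability of acyclic fibrations.
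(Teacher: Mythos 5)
Your argument is exactly the one the paper gives: Lemma \ref{DIAG} identifies $f$ as the base change of $g$, Lemma \ref{LOWER MAP} shows $g$ is an acyclic fibration, and the closure of acyclic fibrations under pullback (\cite[3.14]{dwyerspalinski}) concludes. No differences of substance.
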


We can now  give the proof of Lemma \ref{FC UNIT}.

\begin{polo}
The map  $\eta_{\FC_{{\underline s}}}\colon \FC_{{\underline s}}\ra J_\C K_\C \FC_{{\underline s}}$ 
is given by the inclusion of $\FC$-diagrams 
$$\FC_{{\underline s}} = \overline{\FC}_{{\underline s}}^0\hookrightarrow \overline{\FC}_{{\underline s}}=
J_{\C}K_{\C}\FC_{{\underline s}}$$ 
Moreover,  
$\overline{\FC}_{{\underline s}} = \colim[k]\overline{\FC}_{{\underline s}}^k.$ 
We have a commutative diagram 
$$\xymatrix{
\FC_{{\underline s}} \ar[dr]_{\eta_{\FC_{{\underline s}}}}\ar[rr] & & 
\hocolim[k] \overline{\FC}_{{\underline s}}^k \ar[dl] \\
 & \colim[k]\overline{\FC}_{{\underline s}}^k & \\
}$$

We have that the top map can be given by the composition
$$\FC_{{\underline s}} \ra \hocolim[k] \FC_{{\underline s}} \ra \hocolim[k] \overline{\FC}_{{\underline s}}^k,$$
the first of which is a local equivalence since $\hocolim[k]\FC_{{\underline s}} \simeq \FC_{{\underline s}}\otimes [0,\infty)$ and the second map is a local equivalence since the induced map $$\holim[k] \map(\overline{\FC}_{{\underline s}}^k,Z)\ra \holim[k]\map(\FC_{{\underline s}},Z)$$ is a weak equivalence by \cite[18.5.1]{hirschhorn} and the fact that the homotopy limits are taken over diagrams which are separated by objectwise acyclic fibrations.  We also see that the bottom right map of the above diagram is a local equivalence since both the homotopy colimit and colimit can be computed objectwise.  Furthermore, for $c\in\C$ we have that $$\FC_{{\underline s}}^0(c)\hookrightarrow \FC_{{\underline s}}^1(c) \hookrightarrow \cdots$$ is a cofibrant diagram, so by \cite[18.9.4]{hirschhorn} the map from the homotopy colimit to the colimit is a local equivalence.
Therefore the map $\eta_{FC_{{\underline s}}}$ is also 
a local equivalence.
\qed
\end{polo}

Recall that   Lemma \ref{FC UNIT} was the last element we needed to complete the proof of 
Theorem \ref{mainIII}. Therefore  Theorem \ref{mainIII} is now established.

%
%

\section{ \bf Proof of Theorem \ref{mainIV}}
\label{PF THM}

We will proceed with
a proof of Theorem \ref{mainIV}.  To start we will take note of
the following Lemma which can be seen to be analogous to Theorem \ref{MAINII} for multi-sorted algebraic theories (see also \cite[8.6]{rezk}).

\begin{lemma}
\label{EQ FOR ALG}
Let $\T$ and $\T'$ be multi-sorted algebraic theories and $G\colon \T\ra\T'$ be a  
functor of finite product sketches. 
Then $G$ induces an adjoint pair of functors
between the categories of strict algebras 
$$\xymatrix{
G_\ast\colon \alg{\T}\ar@<0.5ex>[r] &
\alg{\T'} \colon G^\ast \ar@<0.5ex>[l] \\
}$$ 
which is a Quillen pair. Moreover, each of the following is true
\begin{itemize}
\item[i)] If $G$ is a weak r-equivalence, then the induced functor $G^{\ast}$
 gives an equivalence of the homotopy categories of strict algebras.
\item[ii)] If we assume that $G$ is surjective on the sets of objects, then
 $G^\ast$ gives an 
equivalence of the homotopy theories of strict algebras
iff the functor $G$ is a weak equivalence of categories.
\end{itemize}
\end{lemma}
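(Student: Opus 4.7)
The plan is to establish the Quillen adjunction first, and then prove both equivalence statements by means of a Yoneda/free-algebra argument, in the spirit of Lemma \ref{compare}.

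First, observe that $G^{\ast}A := A\circ G$ does define a functor $\alg{\T'}\to\alg{\T}$: since $G$ is a morphism of finite product sketches it carries each product cone of $\T$ to a product cone of $\T'$, and $A$ preserves products on the nose. A left adjoint $G_{\ast}$ exists either by general local presentability of algebras over finitary theories, or explicitly as the left Kan extension along $G$ composed with the reflector $\spaces^{\T'}\to\alg{\T'}$ provided by Lemma \ref{forget}. Because fibrations and weak equivalences in both $\alg{\T}$ and $\alg{\T'}$ are detected objectwise, $G^{\ast}$ preserves both, so $(G_{\ast},G^{\ast})$ is a Quillen pair.

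For part (i), write $F_{t}^{\T}\in\alg{\T}$ for the free strict $\T$-algebra on the tuple of sorts underlying $t\in\T$; it is cofibrant and satisfies $\map_{\T}(F_{t}^{\T},B)\cong B(t)$ for all $B\in\alg{\T}$. An adjunction calculation yields the identification $G_{\ast}F_{t}^{\T}=F_{G(t)}^{\T'}$. Now assume $G$ is a weak r-equivalence. Since $\T,\T'$ are discrete, this says $G$ is a bijection on every hom-set and that every object of $\T'$ is a retract of some $G(t)$. First, $G^{\ast}$ reflects weak equivalences: if $G^{\ast}f$ is an objectwise equivalence then $f_{G(t)}$ is a weak equivalence of spaces for every $t$, and because weak equivalences of spaces are closed under retracts, $f_{t'}$ is a weak equivalence for every $t'\in\T'$. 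Second, the derived unit $\eta_{F_{t}^{\T}}\colon F_{t}^{\T}\to G^{\ast}F_{G(t)}^{\T'}$ evaluated at any $t'$ is, by Yoneda, the hom-set map $\Hom_{\T}(t,t')\to\Hom_{\T'}(G(t),G(t'))$, a bijection by full faithfulness. A cell-complex induction, using that $G_{\ast}$ is a left Quillen functor and preserves generating (trivial) cofibrations, propagates this weak equivalence of derived units from the free algebras to all cofibrant algebras.

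Part (ii) reduces to asking when $G$ is fully faithful. The ``if'' direction is a consequence of (i): a weak equivalence of discrete simplicial categories is exactly a fully faithful and essentially surjective functor, and once $G$ is assumed surjective on objects, this is precisely the weak r-equivalence hypothesis of (i). Conversely, assume $G^{\ast}$ is a Quillen equivalence. Essential surjectivity is immediate. To verify full faithfulness, run the Yoneda identification in reverse: the derived unit
\[
F_{t}^{\T}\lra \mathbb{R}G^{\ast}\,\mathbb{L}G_{\ast}F_{t}^{\T}=G^{\ast}F_{G(t)}^{\T'}
\]
is a weak equivalence in $\alg{\T}$ by hypothesis, and evaluating at $t'\in\T$ recovers the hom-set map $\Hom_{\T}(t,t')\to\Hom_{\T'}(G(t),G(t'))$. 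Since both sides are discrete simplicial sets, a weak equivalence between them is a bijection, and $G$ is fully faithful.

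The main obstacle will be in part (i): transferring the equivalence of derived units from the generating free algebras $F_{t}^{\T}$ to arbitrary cofibrant algebras. This requires a careful transfinite cell-attachment induction with pushouts computed in $\alg{\T'}$, but follows from standard model-category technique since $G_{\ast}$ is left Quillen. The retract-closure observation in the reflection argument is the key ingredient bridging the gap between the image of $G$ and all of $\T'$.
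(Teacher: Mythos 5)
Your overall structure matches the paper's: same construction of the Quillen pair, same Yoneda/corepresented-diagram argument for part (ii), and the same retract-closure observation to show $G^{\ast}$ reflects objectwise weak equivalences when $G$ is a weak r-equivalence. The difference, and the gap, is in how you propagate the derived-unit equivalence from free algebras to all cofibrant algebras in part (i).

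You propose a ``cell-complex induction'' along generating cofibrations, claiming this ``follows from standard model-category technique since $G_{\ast}$ is left Quillen.'' This does not work as stated. The step that fails is the inductive pushout step: given a pushout $D = B\coprod_{A}C$ in $\alg{\T}$ along a cofibration, one knows $G_{\ast}$ preserves the pushout, but the derived unit requires applying $G^{\ast}$ (and a fibrant replacement), and $G^{\ast}$ --- being precomposition, i.e.\ a right adjoint --- does not preserve pushouts in categories of strict algebras. In particular $G^{\ast}\bigl(G_{\ast}B\coprod_{G_{\ast}A}G_{\ast}C\bigr)$ is not $G^{\ast}G_{\ast}B\coprod_{G^{\ast}G_{\ast}A}G^{\ast}G_{\ast}C$, so there is no gluing lemma to invoke, and being left Quillen buys you nothing here. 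This is not a routine detail to defer; it is precisely the obstruction that the paper's argument is designed around. The paper instead takes a cofibrant replacement $|\mathcal{B}(X)|$ given by the simplicial bar construction $(FU)^{\bullet+1}(X)$ (following May and Schwede). This works because $|\mathcal{B}(X)|$ is a geometric realization, which is a sifted colimit computed objectwise in $\alg{\T'}$, hence is preserved by the precomposition functor $G^{\ast}$; each simplicial level is a coproduct of free algebras, where the derived unit is a weak equivalence by the Yoneda identification you correctly gave. Replacing the cell induction with this bar-resolution argument would close the gap and recover the paper's proof.
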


\begin{proof}
The adjoint pair $(G_{\ast},G^{\ast})$ exists by \cite[ch.4 3.5]{Barr}, we also 
see that $G^{\ast}$ preserves fibrations and weak equivalences since both are computed
objectwise and $G$ preserves products so we in fact have a Quillen pair. 

Now, to prove part $ii)$, if we have that $G^{\ast}$ gives an equivalence of homotopy categories,
then in particular its left adjoint, $G_{\ast}$ gives the inverse.
For $T_{\su} \in \T$ we have the corepresented diagram $\Hom_{\T}(T_{\su},-)$ which is
cofibrant in $\alg{\T}.$  By using these equivalences we get that the unit 
$$\eta \colon  \Hom_{\T}(T_{\su},-) \lra G^{\ast}G_{\ast}\Hom_{\T}(T_{\su},-)$$
is an objectwise weak equivalence.  By the property of adjunction 
and Yoneda's Lemma we also get:
$$\map_{\T'}(G_{\ast}\Hom_{\T}(T_{\su},-),X)$$
$$\cong \map_{\T}(\Hom_{\T}(T_{\su},-),G^{\ast}X)$$
$$\cong G^{\ast}X(T_{\su}) \cong X(G(T_{\su} ))$$
$$\cong \map_{\T'}(\Hom_{\T'}(G(T_{\su} ),-),X)$$
for all $X \in {\alg{{\T '}}}$, but this gives
$$G_{\ast}\Hom_{\T}(T_{\su} ,-) \cong \Hom_{\T'}(G(T_{\su}),-)$$
and
$$G^{\ast}G_{\ast}\Hom_{\T}(T_{\su} ,-) \cong \Hom_{\T'}(G(T_{\su}),G(-)).$$
In particular the unit for a corepresented diagram
given by
$$\eta \colon \Hom_{\T}(T_{\su} ,-) \lra \Hom_{\T'}(G(T_{\su}) ,G(-))$$
is an objectwise weak equivalence, but this gives that $G$ is a weak equivalence if we further assume that $G$ is surjective on the sets of objects.

Now we prove $i).$  We need only show that if $G$ is a weak r-equivalence, then $G^{\ast}$ gives
an equivalence of homotopy algebras.  
The proof of this direction follows the proof of \cite[3.4]{Schwede} and the fact that 
if $G$ is a weak r-equivalence of categories, 
the unit of adjunction for any corepresented diagram is an objectwise weak equivalence.  Recall from
the proof of Proposition \ref{ALGTHMODSTR PROP} 
that we have a pair of adjoint functors
$$F\colon \spaces^{S} \lra \alg{\T} \colon U$$
It can be seen that this is in fact a Quillen pair since in the model category of $\spaces^S,$ weak
equivalences, fibrations, and cofibrations are all computed objectwise.  By \cite[9.6]{May} for any 
strict $\T$-algebra $X,$ we can define a 
simplicial object $\mathcal{B}(X)$ in $s\alg{\T}$ by letting $\mathcal{B}(X)_n=(FU)^{n+1}(X).$  Furthermore,
by \cite[9.8]{May} we see that the geometric realization of $\mathcal{B}(X),$ denoted $|\mathcal{B}(X)|$ has 
the property that there exists a map $\varphi \colon |\mathcal{B}(X)| \lra X$ which is a weak equivalence of
strict $\T$-algebras.  By following an analogous argument to the one given in the proof of \cite[3.6]{badziochI}, we 
also see that $|{\mathcal{B}}(X)|$ is a cofibrant object of $\alg{\T}$, thus $|{\mathcal{B}}(X)|$ is a cofibrant replacement
of $X.$  

Now, we see that any cofibrant diagram $X$ can be replaced by $|\mathcal{B}(X)|,$ for which it can be seen that 
the unit of adjunction $\eta_{|\mathcal{B}(X)|}$ is an objectwise weak equivalence if the unit of adjunction for 
any corepresented diagram is an objectwise weak equivalence.  This follows from the argument given in the proof
of \cite[3.4]{Schwede}, which in turn gives us that the unit of adjunction $\eta_{X}$ is an objectwise weak equivalence.  In addition, since $G$ is a weak r-equivalence, every object in $\T'$ is the retract of an object in the image of $G$.  Using a retract of maps argument this gives that for any function $\varphi \in \alg{\T'}$, $\varphi$ is an objectwise weak equivalence if and only if $G^{\ast}\varphi$ is an objectwise weak equivalence.  This, in combination with the fact that the unit of adjunction for any cofibrant diagram is an objectwise weak equivalence, gives us that $(G_{\ast},G^{\ast})$ is a Quillen equivalence.
\qed
\end{proof}

\noindent With that we can give the proof of Theorem \ref{mainIV}:

\begin{poto}
Let $G\colon \C_1\ra\C_2$ be a functor of multi-sorted semi-theories. 
Consider the (non-commutative) diagram 
$$\xymatrix{
\alg{\bFC_1} \ar@<0.5ex>[r] \ar@<0.5ex>[d] &
\alg{\bFC_2}\ar@<0.5ex>[l]\ar@<0.5ex>[d]\\
{\bo L\sFCO} \ar@<0.5ex>[r]\ar@<0.5ex>[d]\ar@<0.5ex>[u]& 
{\bo L\spaces^{\FC_2}}\ar@<0.5ex>[l]\ar@<0.5ex>[d]\ar@<0.5ex>[u]\\ 
\lsco \ar@<0.5ex>[r]\ar@<0.5ex>[u] & 
{\bo L\spaces^{\C_2}} \ar@<0.5ex>[l]\ar@<0.5ex>[u]\\ 
}$$
in which every pair of arrows represents a Quillen pairs of functors. 
The horizontal pairs are induced by the functor $G$ while 
the vertical ones come from the adjunctions of (\ref{C QE FC}), 
(\ref{KC JC}), and (\ref{ALG ISO}). 
Propositions \ref{C QE FC} and \ref{QUILLEN PAIR} imply
that the vertical pairs are Quillen equivalences. 
By this we have that $G$ induces an equivalence of the homotopy categories of
$\lsco$ and  ${\bo L_{S}\spaces^{\C_2}}$ iff it induces 
an equivalence of the homotopy categories of strict algebras
$\alg{\bFC_1}$ and $\alg{\bFC_2}$. Thus Lemma \ref{EQ FOR ALG}
completes the proof.
\qed
\end{poto}

%
%

\section{\bf The Associated Multi-Sorted Semi-Theory}
\label{assmsst}

Our next goal will be to prove Theorems \ref{MAINI COR} and \ref{MAINII}. Our strategy will 
be to use a reductive process: we will show that an arbitrary finite product sketch can be 
replaced by a multi-sorted semi-theory which has the same homotopy category of homotopy 
algebras. As a result, Theorems \ref{MAINI COR} and \ref{MAINII} will follow directly 
from their already established analogs for multi-sorted semi-theories, i.e. Theorems \ref{mainIII} 
and \ref{mainIV}.

The reduction of finite product sketches to multi-sorted semi-theories will be performed in 
two stages. First, we will show that for any finite product sketch we can construct a
multi-sorted finite product sketch in a way that preserves the the homotopy theory of 
homotopy algebras.

\begin{definition}
Let $S$ be a set. 
 An \emph{$S$-sorted finite product sketch} is a finite product sketch $(\C, \kappa)$ with a 
 distinguished set of objects $\{c_{s}\}_{s\in S}$ indexed by $S$
 with the following properties
\begin{itemize}
 \item for all $\alpha \in \kap$ and $i> 0$ we have $\alpha_i \in \{c_{s}\}_{s\in S}$;
 \item for all $\alpha \in \kap,$ $\alpha_0 \notin \{c_{s}\}_{s\in S}$ (unless 
 $|\alpha|=1$ and $p_1^\alpha = \rm id$);
 \item if $\alpha, \beta\in \kappa$, $|\alpha|=n= |\beta|$,  
 and $\{\alpha_i\}_{i=1}^n = \{\beta_i\}_{i=1}^n$ then $\alpha = \beta$;
 \item if $\alpha, \beta\in \kappa$ and $\alpha_0 = \beta_0$ then $\alpha = \beta$. 
\end{itemize}
A \emph{multi-sorted finite product sketch} is a finite product sketch that is $S$-sorted for some set $S$. 
\end{definition}

\begin{lemma}
\label{BB}
 For any  finite product sketch $\B$ there exists  a multi-sorted finite product sketch $\BBM$ such that 
the homotopy categories of homotopy algebras over $\B$ and $\BBM$ are equivalent.  
Moreover, this construction is functorial.
\end{lemma}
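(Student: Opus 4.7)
The plan is to construct $\BBM$ explicitly by ``doubling'' objects: introduce one sort $c_b$ for each object $b \in \B$, and one new ``product object'' $d_\alpha$ for each cone $\alpha \in \kappa$. The category $\BBM$ is presented with generators consisting of the morphisms of $\B$ realizing $\B$ as the full subcategory spanned by the sorts, new projection morphisms $\pi^\alpha_k \colon d_\alpha \to c_{\alpha_k}$ for each $\alpha \in \kappa$ and $1 \leq k \leq |\alpha|$, and, for each $\alpha$, a pair of mutually inverse morphisms $\iota_\alpha \colon c_{\alpha_0} \to d_\alpha$, $\rho_\alpha \colon d_\alpha \to c_{\alpha_0}$, subject to the relations $\rho_\alpha \iota_\alpha = \id$, $\iota_\alpha \rho_\alpha = \id$, and $\pi^\alpha_k \circ \iota_\alpha = p^\alpha_k$ (the image in $\BBM$ of the original projection from $\B$). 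The distinguished set of cones is $\kappa^\mu = \{\tilde\alpha\}_{\alpha \in \kappa}$ where $\tilde\alpha$ has top $d_\alpha$ and leaves $c_{\alpha_k}$ with projections $\pi^\alpha_k$. With this construction all projections land in sorts, and no $d_\alpha$ is a sort, so the first two conditions of a multi-sorted sketch are immediate.

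Next I would establish the equivalence of homotopy categories by exhibiting the functor $G^\mu \colon \BBM \to \B$ that sends $c_b \mapsto b$, $d_\alpha \mapsto \alpha_0$, each $\pi^\alpha_k$ to $p^\alpha_k$, and each $\iota_\alpha,\rho_\alpha$ to $\id_{\alpha_0}$. Pullback along $G^\mu$ gives a functor $(G^\mu)^\ast \colon \spaces^\B \to \spaces^\BBM$ that carries any homotopy $\B$-algebra $X$ to a homotopy $\BBM$-algebra, since the cone condition
$$X(d_\alpha) = X(\alpha_0) \lra \prod_{k=1}^{|\alpha|} X(\alpha_k)$$
is exactly the original cone condition for $X$ at $\alpha$. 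In the other direction, restriction of a $\BBM$-algebra $Y$ along $c \colon \B \to \BBM$, $b \mapsto c_b$, produces a functor $Y^\flat \colon \B \to \spaces$; the isomorphism $\iota_\alpha$ forces $Y^\flat(\alpha_0) = Y(c_{\alpha_0}) \cong Y(d_\alpha)$ in $\spaces$, and composing with the $\BBM$-cone equivalence shows that $Y^\flat$ satisfies the $\B$-algebra cone condition. The composition $((G^\mu)^\ast Y^\flat)$ is naturally isomorphic to $Y$ via $Y(\iota_\alpha)$ on product objects and the identity on sorts, while $(G^\mu)^\ast X$ restricted back recovers $X$ on the nose; passing to homotopy categories, this yields the claimed equivalence.

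For functoriality, a morphism of sketches $G \colon (\bo B_1, \kappa_1) \to (\bo B_2, \kappa_2)$ extends to a functor $G^\mu \colon \BBM_1 \to \BBM_2$ by the rules $c_b \mapsto c_{Gb}$, $d_\alpha \mapsto d_{G\alpha}$, and by sending the distinguished morphisms $\pi^\alpha_k, \iota_\alpha, \rho_\alpha$ to $\pi^{G\alpha}_k, \iota_{G\alpha}, \rho_{G\alpha}$. Since $G$ sends cones to cones, $G^\mu$ sends $\kappa^\mu_1$ into $\kappa^\mu_2$, and the assignment $G \mapsto G^\mu$ respects composition, giving the functoriality of the construction.

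The main obstacle I expect is enforcing the two uniqueness conditions in Definition of a multi-sorted sketch, namely that distinct cones in $\kappa^\mu$ have distinct tops and distinct indexed tuples of leaves. The condition on tops is automatic from the construction, since each $d_\alpha$ is the top of exactly one $\tilde\alpha$. The condition on leaves, however, can fail when two cones $\alpha, \beta \in \kappa$ share the same ordered tuple of leaves while having different tops $\alpha_0 \neq \beta_0$ in $\B$; the natural remedy is to identify $d_\alpha$ with $d_\beta$ in such cases, producing a single cone with two attached pairs $(\iota_\alpha, \rho_\alpha)$ and $(\iota_\beta, \rho_\beta)$. One then has to check that this identification is compatible with the definition of $G^\mu$ (by choosing, functorially in $\B$, a representative for each equivalence class of leaf tuples) and that the resulting restriction-extension still gives an equivalence of homotopy categories, using that in any homotopy $\B$-algebra the spaces $X(\alpha_0)$ and $X(\beta_0)$ are weakly equivalent via the common product $\prod_k X(\alpha_k)$. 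This bookkeeping is the technical heart of the lemma, but it does not affect the overall strategy.
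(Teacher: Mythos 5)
Your construction differs from the paper's, and, as you suspect in your final paragraph, the gap there is genuine — but your proposed fix does not close it. The paper takes $\BBM = \B \times \kapp \times J$, where $\kapp$ is the codiscrete category on the set of cones (one object per cone, a unique isomorphism between any two) and $J$ is the walking isomorphism $0 \leftrightarrows 1$. The cone $\tilde\alpha$ attached to $\alpha\in\kappa$ has top $(\alpha_0,\alpha,0)$ and leaves $(\alpha_i,\alpha,1)$ for $1\le i\le|\alpha|$, with $k$-th projection the composite through $(\alpha_0,\alpha,1)$, and the sorts are $\{(\alpha_i,\alpha,1)\}_{\alpha\in\kappa,\,0<i\le|\alpha|}$. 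Because every leaf of $\tilde\alpha$ carries $\alpha$ itself as a label in its middle coordinate, distinct cones of $\B$ automatically yield distinct leaf tuples and distinct tops in $\BBM$ — the third and fourth conditions of a multi-sorted sketch hold with no case analysis — and the $J$-coordinate separates tops from leaves even if $\alpha_0 = \alpha_k$ in $\B$. Since $\kapp$ and $J$ are equivalent to the terminal category, $\BBM\simeq\B$, and Lemma \ref{midB} gives the Quillen equivalence of localized diagram categories; the construction is manifestly functorial since no choices are made.

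In your version the leaves of $\tilde\alpha$ are the untagged $c_{\alpha_k}$, so the third condition fails whenever two cones of $\B$ share an ordered leaf tuple, exactly as you observe. Your remedy — identifying $d_\alpha$ with $d_\beta$ while retaining both pairs $(\iota_\alpha,\rho_\alpha)$ and $(\iota_\beta,\rho_\beta)$ — changes the category in a way your argument does not survive. Once $d_\alpha = d_\beta = d$, the composite $\rho_\beta\iota_\alpha\colon c_{\alpha_0}\to c_{\beta_0}$ is a new isomorphism that generally does not exist in $\B$, so the full subcategory of $\BBM$ on the sorts is no longer $\B$; and the comparison functor $G^\mu$ ceases to be well defined, since $d$ would need to map to both $\alpha_0$ and $\beta_0$, and even after choosing a representative there need be no morphism $\beta_0\to\alpha_0$ in $\B$ to serve as $G^\mu(\iota_\beta)$. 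Nor does the weak equivalence $X(\alpha_0)\simeq X(\beta_0)$ in a homotopy $\B$-algebra help: any functor $Y\colon\BBM\to\spaces$ must send the isomorphism $\rho_\beta\iota_\alpha$ to an actual isomorphism $Y(c_{\alpha_0})\cong Y(c_{\beta_0})$, a strictly stronger requirement than the $\B$-algebra cone condition. Establishing an equivalence of homotopy categories after the merge is therefore a substantive problem, not bookkeeping; the tagging by $\kapp$ is precisely the device that removes it.
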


The proof of Lemma \ref{BB} is a consequence of the following fact. 
Given two  $n$-fold cones $\alpha$ and $\beta$ we will say that these cones are isomorphic 
if there exists a natural transformation between them given by the set of maps $\{f_i\}_{i=0}^{n}$ 
where $f_i\colon \alpha_{i} \ra \beta_i$  is an isomorphism for all $i$. We have the following Lemma.

\begin{lemma}
\label{midB}
 Suppose $(\B_1,\kappa_1)$ and $(\B_2,\kappa_2)$ are two finite product sketches. 
 Assume also that we have a functor  
$$F\colon \B_1 \lra \B_2$$ 
such that $F$ is an equivalence of categories. Assume also that the following conditions hold:

\begin{itemize}
 \item for each $\alpha \in \kappa_1,$ there is  $   \alpha ' \in \kappa_2$ so that $F(\alpha) \cong \alpha ',$
 \item for each $ \alpha ' \in \kappa_2 ,  $ there is $ \alpha \in \kappa_1 $ so that $\alpha ' \cong F(\alpha).$
 \end{itemize} 
 Then the Quillen pair of functors 
$$F_{\ast} \colon \lsddo \leftrightarrows \lsddt \colon F^{\ast}$$
is a Quillen equivalence. 
\end{lemma}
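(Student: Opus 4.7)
The plan is to invoke \cite[3.3.20]{hirschhorn}, the tool already used in the proof of Proposition \ref{C QE FC}, which states that a Quillen equivalence $M \leftrightarrows N$ descends to a Quillen equivalence of left Bousfield localizations $L_C M \leftrightarrows L_{F(C)} N$ for any set $C$ of maps in $M$. Two things must be verified: first, that the unlocalized pair $(F_{\ast}, F^{\ast})$ is already a Quillen equivalence between $\spaces^{\B_1}_{fib}$ and $\spaces^{\B_2}_{fib}$; and second, that localizing $\spaces^{\B_2}_{fib}$ at the image $F_{\ast}(P_1)$ of the localizing set for $\lsddo$ yields the same model category as $\lsddt$, where $P_i := \{p^{\alpha\ast}\}_{\alpha\in\kappa_i}$.

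For the first point, since $F$ is an equivalence of small categories it admits a quasi-inverse $G\colon \B_2\to\B_1$. Precomposition then yields mutually quasi-inverse equivalences $F^{\ast}, G^{\ast}$ of the functor categories, so by uniqueness of adjoints $F_{\ast}\cong G^{\ast}$ and the adjunction $(F_{\ast}, F^{\ast})$ is itself an equivalence of categories. Because $F^{\ast}$ visibly preserves objectwise fibrations and objectwise weak equivalences, $(F_{\ast}, F^{\ast})$ is a Quillen equivalence of the unlocalized diagram categories.

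For the second point, I would use that $F_{\ast}$ sends the corepresented functor $(\B_1)_c$ to $(\B_2)_{F(c)}$, so that $F_{\ast}(p^{\alpha\ast})$ is precisely the projection map $p^{F(\alpha)\ast}$ associated to the cone $F(\alpha)\colon \CCC_{|\alpha|}\to \B_2$. A Yoneda calculation then shows that a fibrant $Y\in \spaces^{\B_2}_{fib}$ is local with respect to $F_{\ast}(P_1)$ iff $Y(F(\alpha_0))\to \prod_k Y(F(\alpha_k))$ is a weak equivalence for every $\alpha\in\kappa_1$, and is $P_2$-local iff the analogous condition holds for every $\alpha'\in\kappa_2$.

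The final and main step is the cone-matching argument. Given $\alpha\in\kappa_1$, pick $\alpha'\in\kappa_2$ with $F(\alpha)\cong\alpha'$ and observe that the cone isomorphism supplies compatible isomorphisms $Y(F\alpha_i)\cong Y(\alpha'_i)$ intertwining the two weak-equivalence conditions on $Y$; the reverse direction is symmetric. Therefore $F_{\ast}(P_1)$ and $P_2$ determine the same fibrant objects, hence the same left Bousfield localization of $\spaces^{\B_2}_{fib}$. Combined with the previous step and \cite[3.3.20]{hirschhorn}, this establishes the claim. The genuinely nontrivial part is checking that isomorphic cones yield the same local-object condition; everything else is formal manipulation of Kan extensions and adjunctions.
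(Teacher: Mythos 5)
Your proposal is correct and follows essentially the same route as the paper: establish that $(F_\ast,F^\ast)$ is a Quillen equivalence of the unlocalized diagram categories because $F$ is an equivalence, then apply \cite[3.3.20]{hirschhorn} after checking that the two localizing sets agree up to isomorphism via the cone-matching hypotheses. You spell out the Yoneda/Kan-extension bookkeeping more explicitly than the paper does, but the structure of the argument is identical.
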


\begin{proof}
First, since $F$ is an equivalence of categories it induces a Quillen equivalence 
$$F_{\ast} \colon \spaces^{\B_1}_{fib} \leftrightarrows \spaces^{\B_2}_{fib} \colon F^{\ast}.$$
Since $F$ preserves cones (up to an isomorphism) we get  
that maps which we localize $\spaces^{\B_2}$ by are sent by $F^{\ast}$ to maps
which we localize $\spaces^{\B_1}_{fib}$ by (up to isomorphism).  By \cite[3.3.20]{hirschhorn} it 
follows that  
$$F_{\ast} \colon \lsddo \leftrightarrows \lsddt \colon F^{\ast}$$ is a Quillen equivalence.

\qed
\end{proof}

We can now proceed to the proof of Lemma \ref{BB}.
\begin{polt}
 Let $(\B, \kappa)$ be a finite product sketch.
We define the category  $\kapp$ which has as its objects,
the set of cones $\{\alpha | \alpha \in \kap \}$ and for each pair of objects 
$\alpha , \beta \in \kapp$ there is a unique isomorphism 
$$\psi_{\alpha, \beta} \colon \alpha \lra \beta.$$

Next, let 
$J$ be the category with two objects $0$ and $1$ and the  non-identity morphisms given by two inverse 
isomorphisms: $$\varphi \colon 0 \leftrightarrows 1 \colon \varphi^{-1}.$$
Take $\BBM = \B \times \kapp \times J$. For every $n$-fold cone $\alpha\in \kappa$ we have 
the associated $n$-fold cone in $\BBM$ given as follows:
$$\xymatrix{
& (\az, \alpha, 0) \ar[d]_{{\rm id}\times {\rm id} \times \varphi} & & \\
& (\az, \alpha, 1) \ar[ddl]_{p^{\alpha}_1 \times {\rm id}\times {\rm id}}
\ar[dd]^{p^{\alpha}_2 \times {\rm id}\times {\rm id}}\ar[ddrr]^{p^{\alpha}_n \times {\rm id}\times {\rm id}} & & \\
& & & & \\
(\ao, \alpha, 1)  & 
(\alpha_2, \alpha, 1)  & \dots & 
(\alpha_n, \alpha, 1) \\
}$$
Consider the sketch $(\BBM, \kappa_{\mu})$ where $\kappa_{\mu}$ is the set of all cones of the above form along with the identity cones for each object of $\BBM$ which does
not show up in such a cone. 
Notice that $(\BBM, \kappa_{\mu})$ is a multi-sorted finite product sketch with the distinguished set of 
objects $\{(\alpha_{i}, \alpha, 1)\}_{\alpha\in \kappa,\  0< i \leq |\alpha|}$. 
We define the functor: $$F \colon \B \lra \BBM,$$
by  $F(b)=(b,\alpha,0)$ and 
$F(\theta \colon b_1 \ra b_2)= \theta \times {\rm id} \times {\rm id},$ where
$\alpha$ is some fixed cone from $\kappa$ (we can assume that 
$\kappa$ is non-empty since $\B$ would satisfy Lemma \ref{BB} trivially otherwise).  
It can be checked that  $F$ satisfies the conditions of  Lemma \ref{midB}, and so 
it gives a Quillen equivalence 
$$F_{\ast} \colon \lsd \leftrightarrows \lsddm \colon F^{\ast}.$$
Thus $F^{\ast}$ induces an equivalence between  homotopy categories of homotopy algebras 
over $\B$ and $\BBM$.
\qed 
\end{polt}

Next, we will show that any multi-sorted finite product sketch can be 
replaced by a multi-sorted semi-theory in a way that does not change the 
homotopy theory of homotopy algebras. 

\begin{lemma}
\label{BBB}
For any multi-sorted finite product sketch $(\B, \kappa)$ there exists a multi-sorted semi-theory 
$(\BBS, \kappa_{\sigma})$ so that 
the homotopy category of homotopy algebras over $\B$ and $\BBS$ are equivalent.  Moreover, this 
construction is functorial in $\B$.
\end{lemma}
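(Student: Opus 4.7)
The plan is as follows. First, without loss of generality I would assume that every object of $\B$ is either a sort or the vertex of a unique non-identity cone in $\kappa$, which can be arranged by enlarging the sort set $S$ to $S' = S \sqcup T$, where $T$ is the set of remaining ``extra'' objects of $\B$; the defining conditions of a multi-sorted finite product sketch continue to hold for $S'$ without modifying $\kappa$ or the underlying category. Next I construct $\BBS$ as follows. Its objects are indexed by all finite tuples $\underline{s}$ from $S'$. If $\underline{s} = (s_{1},\dots,s_{n})$ is realized by some $\alpha \in \kappa$ in the sense that $\alpha_{i} = c_{s_{i}}$ for $1 \leq i \leq n$, then I set $c_{\underline{s}} := \alpha_{0}$; otherwise $c_{\underline{s}}$ is a freshly adjoined formal object. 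Morphisms of $\BBS$ are freely generated by the morphisms of $\B$ together with new projections $p^{\underline{s}}_{k}\colon c_{\underline{s}} \to c_{s_{k}}$ for each adjoined object, modulo only the relations inherited from $\B$. I take $\kappa_{\sigma}$ to consist of the canonical cone with vertex $c_{\underline{s}}$ and projections $p^{\underline{s}}_{k}$ for every tuple $\underline{s}$; when $\underline{s}$ is realized by some $\alpha \in \kappa$, this canonical cone coincides with $\alpha$. A direct verification against Definition \ref{SEMITH DEF} shows $(\BBS, \kappa_{\sigma})$ is an $S'$-sorted semi-theory.

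The inclusion $F\colon \B \hookrightarrow \BBS$ is a morphism of finite product sketches, and I would establish the claimed equivalence by showing that the induced Quillen pair $(F_{\ast},F^{\ast})$ descends to a Quillen equivalence between $\bo L \spaces^{\B}$ and $\bo L \spaces^{\BBS}$. The key heuristic is that each freshly adjoined object $c_{\underline{s}}$ has only outgoing morphisms (its projections) and is constrained by its canonical cone in $\kappa_{\sigma}$ to be a homotopy product of sorts; consequently any homotopy $\B$-algebra $X$ extends essentially uniquely up to weak equivalence to a homotopy $\BBS$-algebra by setting $X(c_{\underline{s}}) := \prod_{k}X(c_{s_{k}})$ on new objects. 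To formalize this, I would present $\BBS$ as a (possibly transfinite) sequential colimit of intermediate sketches, each obtained from the previous by adjoining a single missing tuple-object with its canonical cone, and show via \cite[3.3.20]{hirschhorn} that each single-step inclusion induces a Quillen equivalence of localized diagram categories, observing that the new localizing map introduced at each stage is precisely the image of the corresponding map from the enlarged sketch.

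Functoriality in $\B$ follows from the canonical nature of the construction: a morphism $G\colon (\B_{1},\kappa_{1}) \to (\B_{2},\kappa_{2})$ of multi-sorted finite product sketches preserves sorts and cones, and hence induces an evident functor $G^{\sigma}\colon \BBS_{1} \to \BBS_{2}$ that sends each tuple-object $c_{\underline{s}}$ in $\BBS_{1}$ to the tuple-object of $\BBS_{2}$ indexed by $G$'s action on sorts. The main technical obstacle I anticipate is executing the transfinite step-by-step argument rigorously and verifying that the localizing classes of maps correspond correctly across all stages; the heuristic that ``new objects are homotopy products'' is transparent, but translating it into a Quillen equivalence requires careful bookkeeping of the cofibrant generators and filtered colimits involved, along with attention to any mismatch between the partition of objects into sorts and cone-vertices when verifying functoriality across different sketches.
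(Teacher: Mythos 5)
Your construction of $(\BBS,\kappa_\sigma)$ matches the paper's (the preliminary enlargement of $S$ by the ``extra'' objects is a sensible clarification of a point the paper leaves implicit), but your proposed proof of the equivalence of homotopy categories contains a genuine gap and would not go through as written. The cited result \cite[3.3.20]{hirschhorn} transfers a Quillen equivalence along a localization: it requires the pre-localization adjunction to already be a Quillen equivalence. In your single-step inclusion $\B_k \hookrightarrow \B_{k+1}$ the functor $F_\ast$ is a left Kan extension along a non-equivalence of categories, and $(F_\ast, F^\ast)$ between the unlocalized projective model structures $\spaces^{\B_k}_{fib}$ and $\spaces^{\B_{k+1}}_{fib}$ is only a Quillen pair, not a Quillen equivalence. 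Moreover the ``new localizing map introduced at each stage'' is the cone for the freshly adjoined object $c_{\underline s}$; its source and target lie outside the image of $\spaces^{\B_k}$, so there is no corresponding map in the smaller sketch for it to be the image of, and the hypotheses of \cite[3.3.20]{hirschhorn} simply are not met. You correctly flag this as the technical obstacle, but it is not bookkeeping to be filled in --- the strategy needs to change.

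The paper avoids Quillen machinery entirely for this step and the argument is shorter. Let $F\colon\B\hookrightarrow\BBS$ be the inclusion and $F^\ast$ the restriction, but take its \emph{right} adjoint $G=\operatorname{Ran}_F$ (not the left Kan extension). Because each adjoined object $c_{\underline s}$ has only the projections $p^{\underline s}_k$ as outgoing generators, one computes $G(X)(c_{\underline s}) = \prod_{k} X(c_{s_k})$ while $G(X)$ agrees with $X$ on $\B$. In particular $G$ sends homotopy $\B$-algebras to homotopy $\BBS$-algebras (the new cones go to literal products), $F^\ast G \cong \mathrm{id}$ via the counit, and for a homotopy $\BBS$-algebra $X$ the unit $X\to GF^\ast X$ is the identity on objects of $\B$ and the map $X(c_{\underline s})\to\prod_k X(c_{s_k})$ induced by the projections on the new objects, which is an objectwise weak equivalence precisely because $X$ is a homotopy algebra. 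Since the homotopy categories in question are obtained from the full subcategories of homotopy algebras by inverting objectwise weak equivalences, $F^\ast$ and $G$ descend to mutually inverse equivalences. This direct comparison --- rather than a step-by-step transfer of Quillen equivalences through Bousfield localizations --- is what makes the lemma go.
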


\begin{proof}
Notice that a multi-sorted finite product sketch $(\B, \kappa)$ can be equivalently described as follows. 
There exists a set $S$ such that objects  $b_{\underline s}\in \B$ can be indexed by \emph{some}
of the $n$-tuples of $S$ ($n\geq 0$) and for any $s\in S$ we have $b_{s}\in S$ (as before we identify 
here elements of $S$ with 1-tuples defined by these elements). 
Any cone $\alpha\in \kappa$ satisfies the property that if 
$\alpha_{0} = b_{\underline s}$ where $\underline s = (s_{1}, \dots, s_{n})$ then $\alpha_{k}= b_{s_{k}}$ 
for $k=1, \dots, n$. Moreover, for any $b_{\underline s}\in \B$  there exists a unique cone $\alpha\in \kappa$
such that $\alpha_{0} = b_{\underline s}$. In other words the difference between $\B$ and 
an $S$-sorted semi-theory is that for some $n$-tuples $\underline s$ in $S$ there may be 
no object of $\B$ indexed by $\underline s$, and thus the cone corresponding to $\underline s$
will be also missing. To fix it we enlarge that category $\B$ as follows. Let $\underline S$ denote the set of all 
$n$-tuples in $S$:
$$\underline S = \{\underline s = (s_{1}, \dots, s_{n}) \ | \ s_{i} \in S, n\geq 0\}.$$
Also, let $\underline S_{\B}$ denote the set of all $n$-tuples that index elements of $\B$:
$$\underline S_{\B} = \{ \underline s \in \underline S \ | \  b_{\underline s} \in \B \}.$$  
We start by letting $\BBS$ be the smallest category whose objects $b_{\underline s}$ are indexed by all elements 
$\underline s\in \underline S$, with morphisms determined so that $\B$ is a full subcategory of $\BBS$ and for each 
$\underline s = (s_{1}, \dots, s_{n}) \not\in \underline S_{\B}$ the category $\BBS$ has morphisms 
$p^{\underline s}_{k}\colon b_{\underline s} \to b_{s_{k}}$, which compose freely with 
morphisms in $\B$. We give $\BBS$ a finite product sketch structure by defining the set of cones $\kappa_{\sigma}$
that consists of all cones in $\kappa$ and for each 
$\underline s= (s_{1}, \dots, s_{n}) \not \in \underline S_{\B},$ a cone
$\alpha^{\underline s}$ with  $\alpha^{\underline s}_{0} = b_{\underline s}$, for $k=1, \dots, n$
$\alpha^{\underline s}_{k} = b_{s_{k}}$ and with projections given by the morphisms   
$p^{\underline s}_{k}$. Clearly $(\BBS, \kappa_{\sigma})$ is an $S$-sorted semi-theory. 

It remains to show that  the homotopy category of homotopy algebras over $(\B, \kappa)$
is equivalent to the homotopy category of homotopy algebras over $(\BBS, \kappa_{\sigma})$. 
Let $F\colon \B \to \BBS$ be the inclusion functor, and let $F^{\ast}\colon \spaces^{\BBS}\to \spaces^{\B}$
be the functor induced by $F$.  The functor $F$ has a right adjoint $G$ which can be described 
as follows. For $X\in \spaces^{\B}$ the functor $G(X)\colon \BBS\to \spaces$ coincides with $X$
when restricted to $\B\subseteq \B'$. For an object $b_{\underline s}\in \BBS$ such that 
$b_{\underline s}\not\in \B$ and where $\underline s = (s_{1}, \dots, s_{n})$ we set 
$$G(X)(b_{\underline s}) = \prod_{k=1}^{n}X(b_{s_{k}}).$$

Denote by $ \alg{\B}_{h}$ and $ \alg{\BBS}_{h}$ the full subcategories 
of $\spaces^{\B}$ and $\spaces^{\BBS}$ respectively whose objects are homotopy $\B$-(resp. $\BBS$-)algebras.  Notice that both  $F^{\ast}$ and $G$ restrict to functors 
$$F^{\ast} \colon \alg{\BBS}_{h} \leftrightarrows \alg{\B}_{h}\colon G$$
Notice that the composition $F^{\ast}G$ is naturally isomorphic to the identity functor via the counit. Also, for any 
$X\in \alg{\BBS}_{h}$ the unit of adjunction is an objectwise weak equivalence
$X \overset{\simeq}{\lra}  GF^{\ast}(X)$. Since the homotopy categories of homotopy algebras 
are obtained from $\alg{\B}_{h}$ and $\alg{\BBS}_{h}$ by inverting all objectwise weak equivalences 
we obtain that $F^{\ast}$ and $G$ give inverse equivalences on the level of the homotopy categories. 
\qed
\end{proof}

%
%
%

\section{\bf  Proof of Theorem \ref{MAINI COR} and Theorem \ref{MAINII}}
\label{bigpf}
 
We can now use Lemma \ref{BB} and lemma \ref{BBB} to see that any finite product sketch $\B$ can be replaced
by a multi-sorted semi-theory $\BBB$ which has an equivalent homotopy category of homotopy algebras and use this to prove Theorem \ref{MAINI COR}.
  
\begin{pottc}
Let $\B$ be a finite product sketch.  We can compose the equivalences from Lemma \ref{BB} and Lemma \ref{BBB} to get an equivalence of homotopy categories
$$\Homcat\lsd \leftrightarrows
\Homcat\lsddm \leftrightarrows \Homcat \bo{Alg}_h^{\BBM}
\leftrightarrows \Homcat\bo{Alg}_h^{\BBB}.$$
The middle equivalence comes from the fact that we constructed $\lsddm$ to serve as a model category for $\bo{Alg}_h^{\BBM}.$
This gives us that the homotopy category of homotopy $\B$-algebras
is equivalent to the homotopy category of homotopy $\BBB$-algebras.
Since $\BBB$ is a multi-sorted semi theory we have an equivalence from Theorem \ref{mainIII},
$$ \Homcat{\bo{L}} \spaces^{\BBB} \leftrightarrows \Homcat\bo{Alg}^{\overline{F_{\ast}\BBB}}.$$
By composing with the equivalences above we get the following equivalence of homotopy categories.
$$\Homcat\lsd \leftrightarrows \Homcat\bo{Alg}^{\overline{F_{\ast}\BBB}}$$
\noindent which gives us that the homotopy category of homotopy $\B$-algebras
is equivalent to the homotopy category of strict $\overline{F_{\ast}\BBB}$-algebras.
\qed
\end{pottc}

With this we can proceed with a proof of Theorem \ref{MAINII}.
\begin{potth}
For two finite product sketches $\BO$ and $\BT$, let $\BOM$, $\BTM,$ $\BOMS,$ and $\BTMS$ be the associated
multi-sorted finite product sketches and multi-sorted semi-theories from Lemma \ref{BB} and Lemma \ref{BBB}
respectively.
Notice that for a functor of finite product sketches which is an injection on objects:
$$F\colon \BO \lra \BT$$ we can induce a cone preserving functor:
$$F^{\mu} \colon \BOM \lra \BTM$$
$$(B,\alpha , i) \lra (F(B),F(\alpha),i)$$
where $F(\alpha)$ makes sense because $F$ preserves cones.  This induces a functor:
$$F^{\mu \sigma}\colon \BOMS \lra \BTMS$$ 
which is defined naturally on objects of the form $(B,\alpha , i)$ and which sends any added cones
in $\BOMS$ to the obvious cone in $\BTMS.$  We see that this is a functor of multi-sorted theories.  By construction we see that $F^{\mu \sigma}$ induces an equivalence of the homotopy categories of homotopy algebras if and only if $F$ does the same.  The remainder of the proof is therefore a consequence of Theorem \ref{mainIV} since Theorem \ref{mainIV} is essentially Theorem \ref{MAINII} for multi-sorted semi theories.  Furthermore, it can be seen that the assumptions on objects and cones in Theorem \ref{mainIV} lift to the associated assumptions on objects and cones for Theorem \ref{MAINII}.
\qed
\end{potth}

In summary, given a finite product sketch $(\B,\kappa)$ we can consider the homotopy structures that arise from homotopy algebras over $(\B,\kappa).$  In many situations these homotopy structures can be interesting to study (for example the infinite loop space structures that arise from considering homotopy algebras over $\Gamma^{op}$ \ref{GAMMA EXAMPLE}) and it is worth determining if there is some weakly equivalent strict algebraic structure that also arises.  This paper has shown that such an equivalence exists if we consider strict algebras over the associated simplicial multi-sorted algebraic theory $\FB$.  This gives our rigidification of homotopy algebras over $(\B,\kappa).$  Furthermore, we have given a way of determining if two finite product sketches determine equivalent homotopy categories of homotopy algebras by considering the associated simplicial multi-sorted algebraic theories.

\vskip .5cm  

\noindent{\bf Acknowledgment. \ } This paper is a version of my Ph.D. thesis completed at the State University of New York
at Buffalo.  I would like to thank my advisor
Bernard Badzioch for his incredible patience and for his valuable suggestions.
I would like to thank my loving wife for her encouragement and support.  I would also like to thank the referee for his or her helpful suggestions and comments.

\end{document}